\documentclass[10pt]{article}

\usepackage{amsthm}
\usepackage[section]{algorithm}
\usepackage[title]{appendix}
\usepackage[affil-it]{authblk}
\usepackage{fancyhdr}
\usepackage{graphicx}
\usepackage{ifpdf}
\usepackage{xcolor}
\usepackage{hyperref}
\usepackage[differential]{boldtensors}

\usepackage{algorithm}
\usepackage[noend]{algpseudocode}

\usepackage{amssymb}
\usepackage{booktabs}
\usepackage{mathtools}
\usepackage{setspace}
\usepackage{ulem}
\usepackage{wrapfig}
\usepackage{xcolor}
\usepackage{multirow}
\usepackage{minted}

\setminted{
    style=pastie,
    fontsize=\small,
    linenos,
    frame=none
}

\newcommand{\lr}[1]{\left(#1\right)}
\newcommand{\nn}[1]{\left|#1\right|}
\newcommand{\dual}[2]{\left\langle #1\,|\,#2\right\rangle}

\newcommand{\ip}[2]{\left\langle #1,#2\right\rangle}

\DeclareMathOperator*{\argmin}{argmin}

\let\originalleft\left
\let\originalright\right
\def\left#1{\mathopen{}\originalleft#1}
\def\right#1{\originalright#1\mathclose{}}  
\usepackage[nameinlink]{cleveref}

\usepackage{minted}
\usepackage{siunitx}

\newcommand{\figOVP}{
\begin{figure}[tbp] 
\begin{center}
    \includegraphics[width=\textwidth]{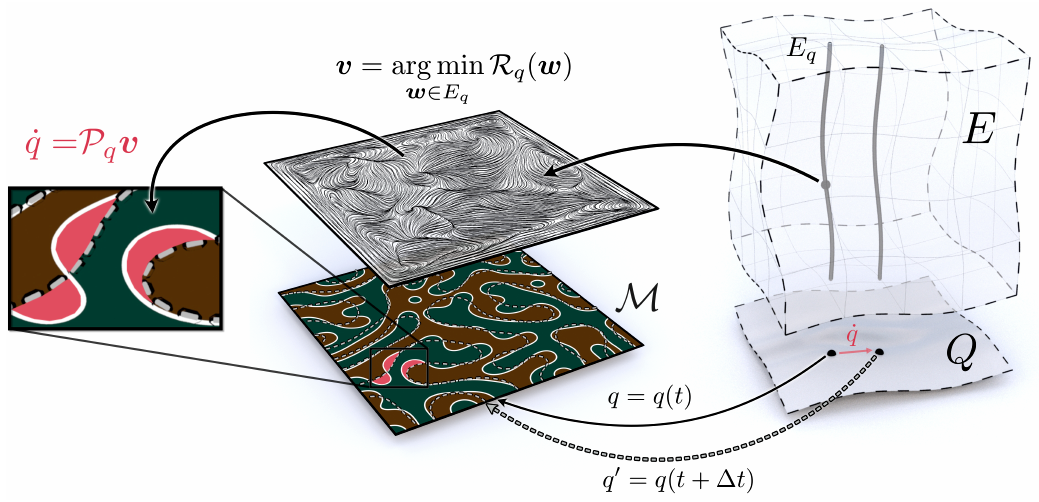}
    \caption{An illustration depicting Onsager's Variational Principle \Cref{def:OVP}. Here, configurations $q\in Q$, representing scalar fields over a manifold $\mathcal{M}$, are updated based on optimal elements $~w\in E_{q}$ in the fibers of a vector bundle $E\rightarrow Q$, which are vector fields on $\mathcal{M}$ depending on the configuration $q$. 
    }
    \end{center}
    \label{fig:OVP}
\end{figure}
}

\newcommand{\figFrames}{
\begin{figure}[htbp]
\centering
\includegraphics[width=\textwidth]{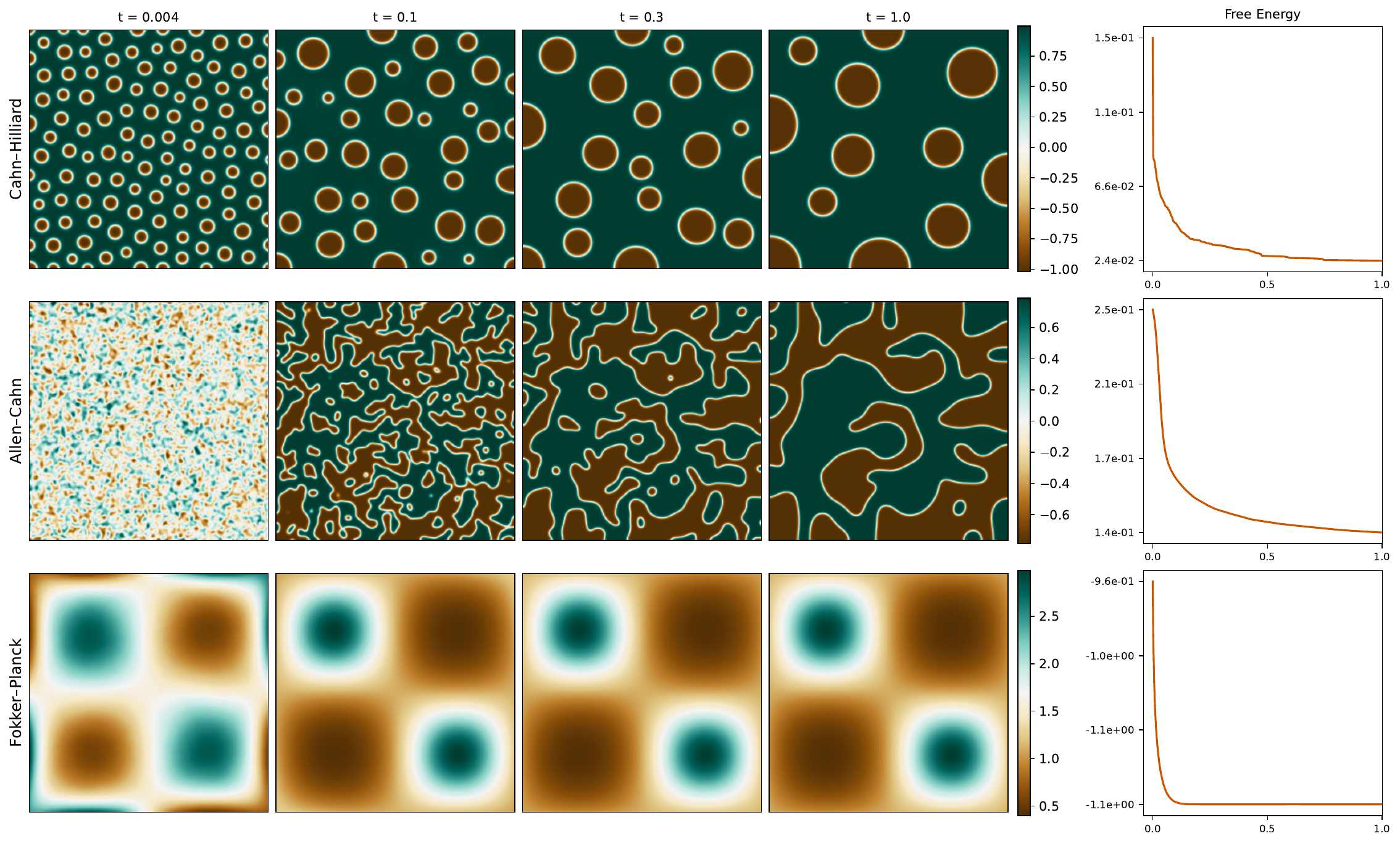}
    \caption{Snapshots of the Cahn-Hilliard (CH), Allen-Cahn (AC), and Fokker-Planck (FP) equations, computed using the OVP integrator \Cref{alg:OVP} with potential $U(x, y) = \sin 3x \sin 3y$ (for FP) and density $F(c) = (1/4)(1-c^2)^2$ (for CH). The initial condition for all simulations is the same Gaussian noise pattern (mean-shifted by 0.5 for CH and 1.25 for FP) and $\Delta t = 0.002$.  Concentration fields are row-normalized for visualization.}
    \label{fig:frames}
\end{figure}
}

\newcommand{\figDiffusion}{
\begin{figure}[htbp]
\centering
\includegraphics[width=\textwidth]{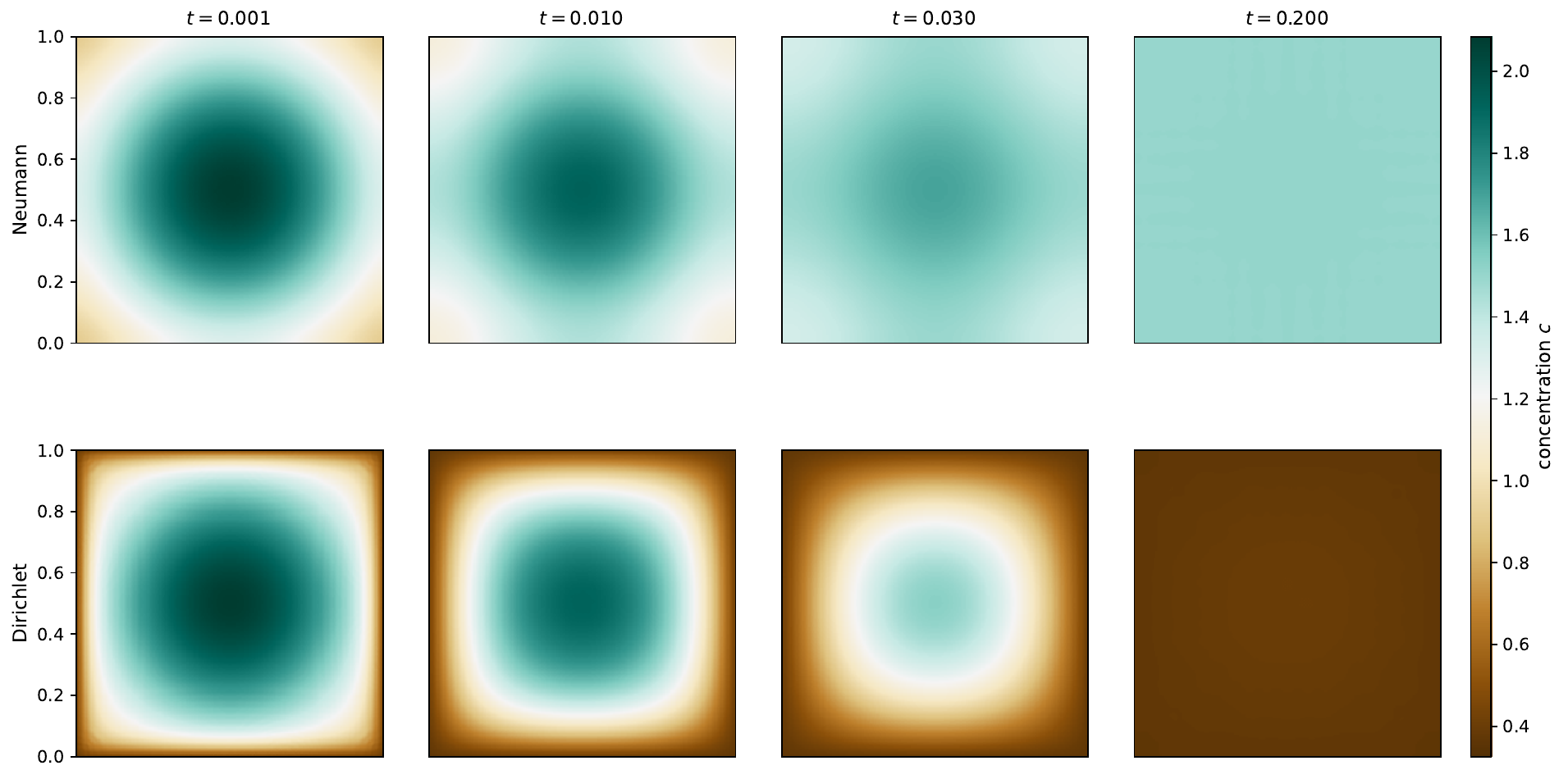}
    \caption{OVP-based diffusion with (top) and without (bottom) the essential condition $~w\cdot~n=0$ on $\partial\mathcal{M}$, implying natural Neumann and absorbing Dirichlet BCs, respectively.  Both plots use $\mu_0=1$, so that the Dirichlet BC is $e^{-1}$.}
    \label{fig:dirichlet-vs-neumann}
\end{figure}
}

\newcommand{\figCahnHilliard}{
\begin{figure}[htbp]
\centering
\includegraphics[width=\textwidth]{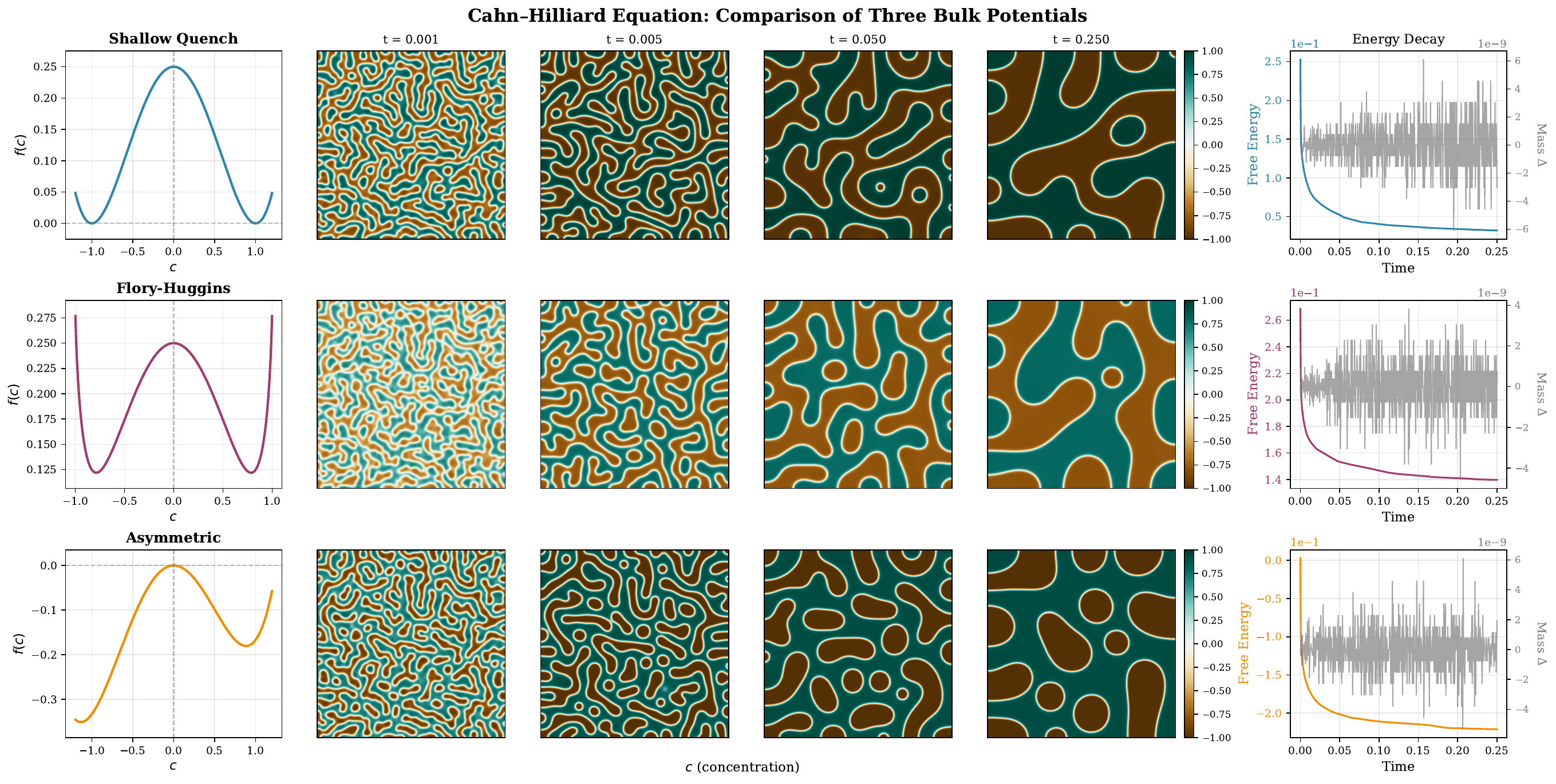}
    \caption{Three representative Cahn-Hilliard evolutions, differing in their bulk free energy density $f(c)$.  Note the monotonic free energy decay and conservation of mass up to the single precision used in these calculations.}
    \label{fig:threeCahnHilliard}
\end{figure}
}

\newcommand{\figFEloss}{
\begin{figure}[htbp]
\centering
\includegraphics[width=0.8\textwidth]{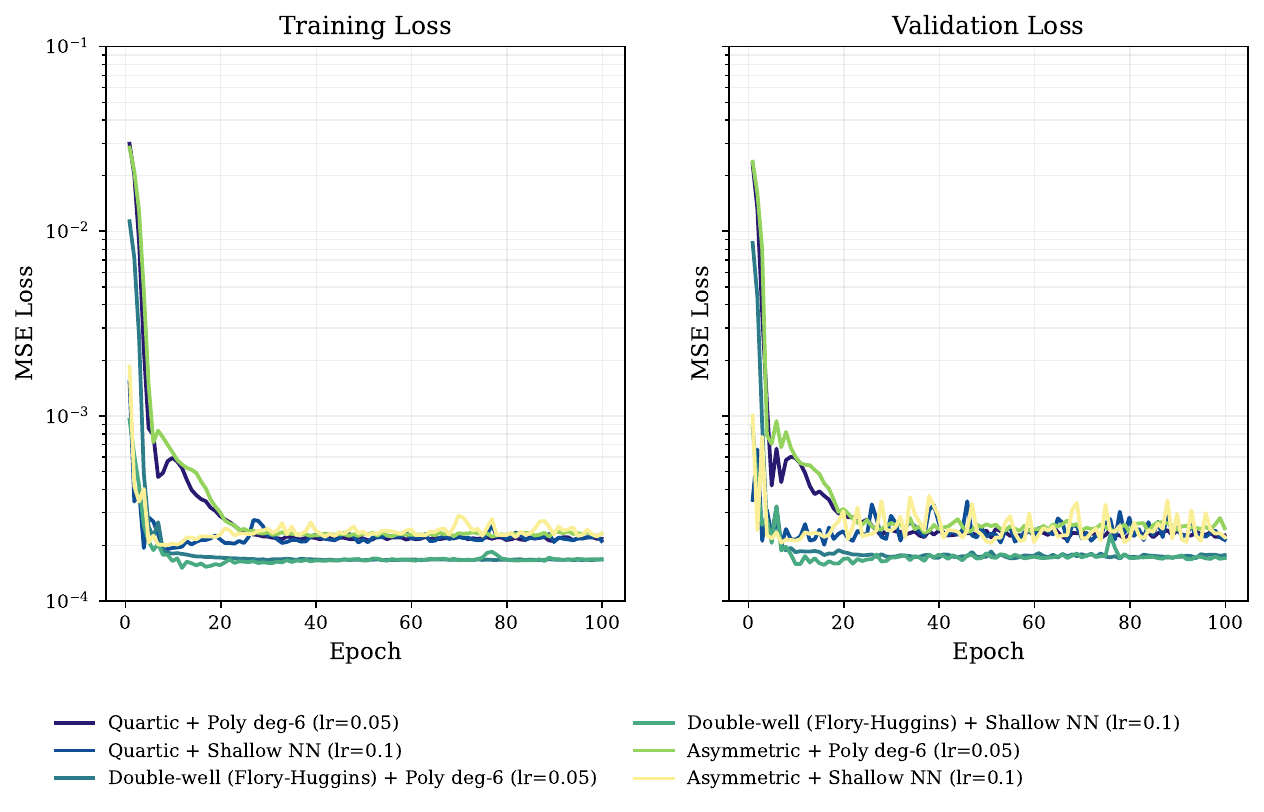}
    \caption{Training and validation loss (mean squared error) curves illustrating the training process in the case of the bulk free energy density recovery experiment.}
    \label{fig:61training}
\end{figure}
}

\newcommand{\figFEgrid}{
\begin{figure}[htbp]
\centering
\includegraphics[width=\textwidth]{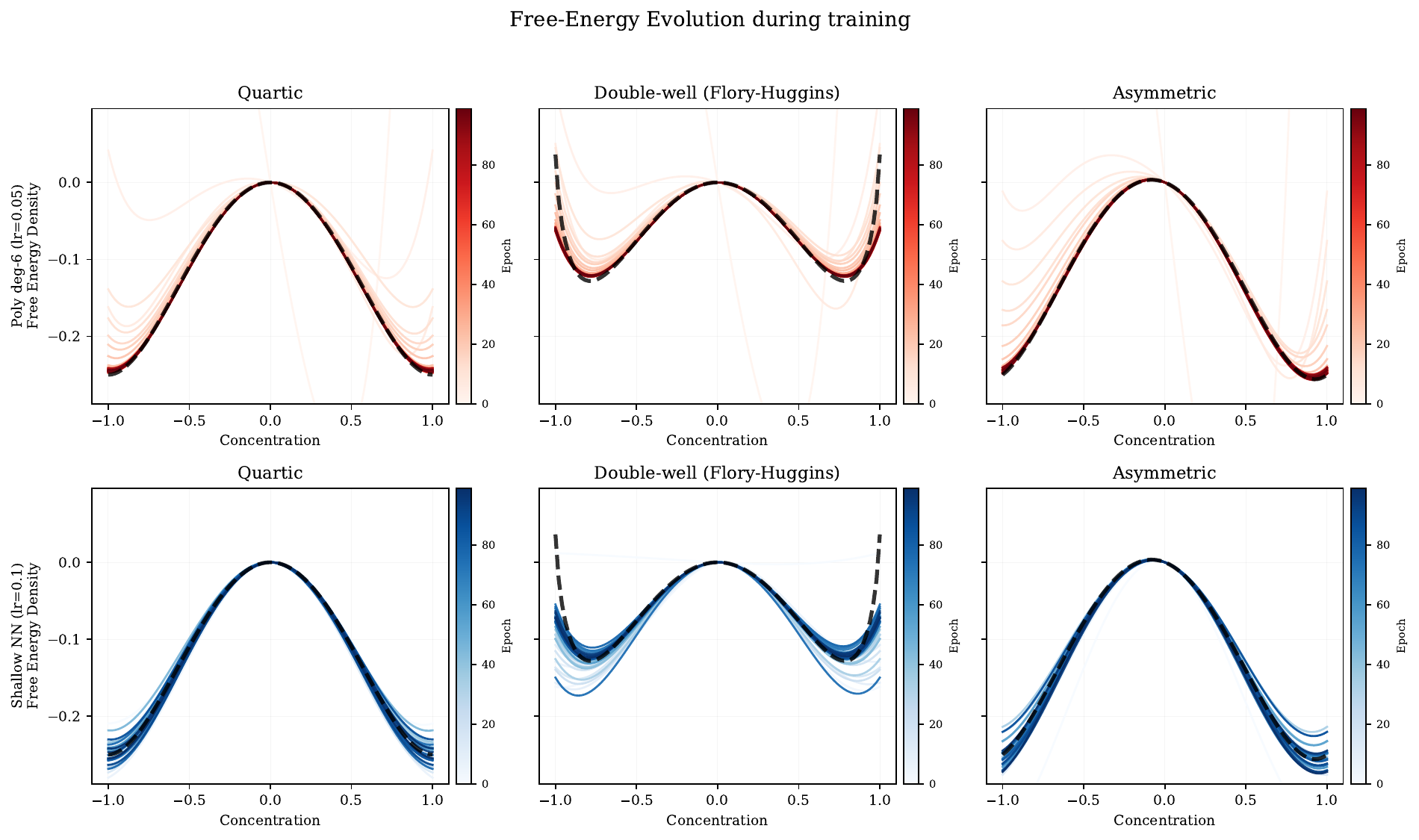}
    \caption{Evolution of the learned free energies $F_{\theta}$ over the duration of training, for the polynomial (red) and neural network (blue) parameterizations.  The passage of training epochs is illustrated with a gradient, with later epochs corresponding to darker curves. The ground truth is overlaid with a dashed line in each case.}
    \label{fig:61grid}
\end{figure}
}

\newcommand{\figLearnedEvolutions}{
\begin{figure}[htbp]
    \centering
    \includegraphics[width=0.85\textwidth]{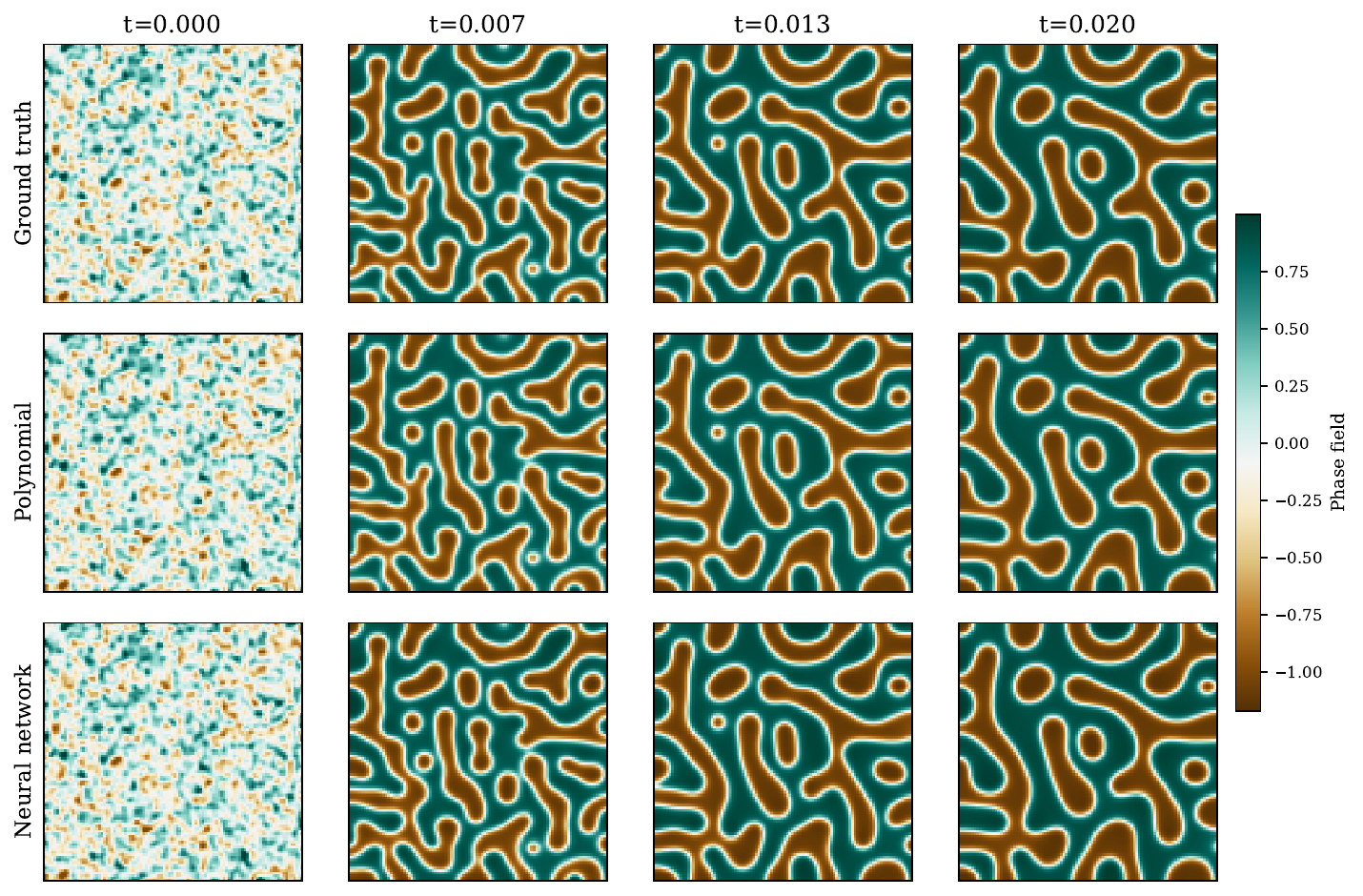}
    \caption{Evolution of the learned Cahn-Hilliard system in the asymmetric case, starting from an initial condition in the validation set. Pictured is the ground truth evolution (top) along with the evolution of the learned polynomial (middle) and neural network (bottom) surrogates.}
    \label{fig:learned_evolutions}
\end{figure}
}

\newcommand{\figBClossdens}{
\begin{figure}[htbp]
    \centering
    \begin{minipage}[t]{0.32\textwidth}
        \centering
        \includegraphics[width=\textwidth]{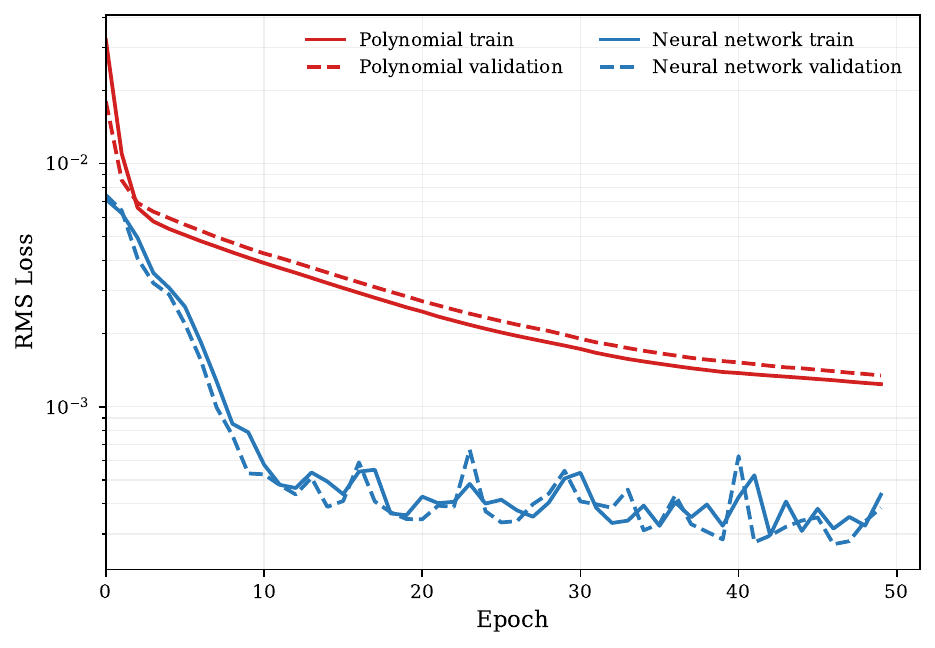}
    \end{minipage}
    \hfill
    \begin{minipage}[t]{0.64\textwidth}
        \centering
        \includegraphics[width=\textwidth]{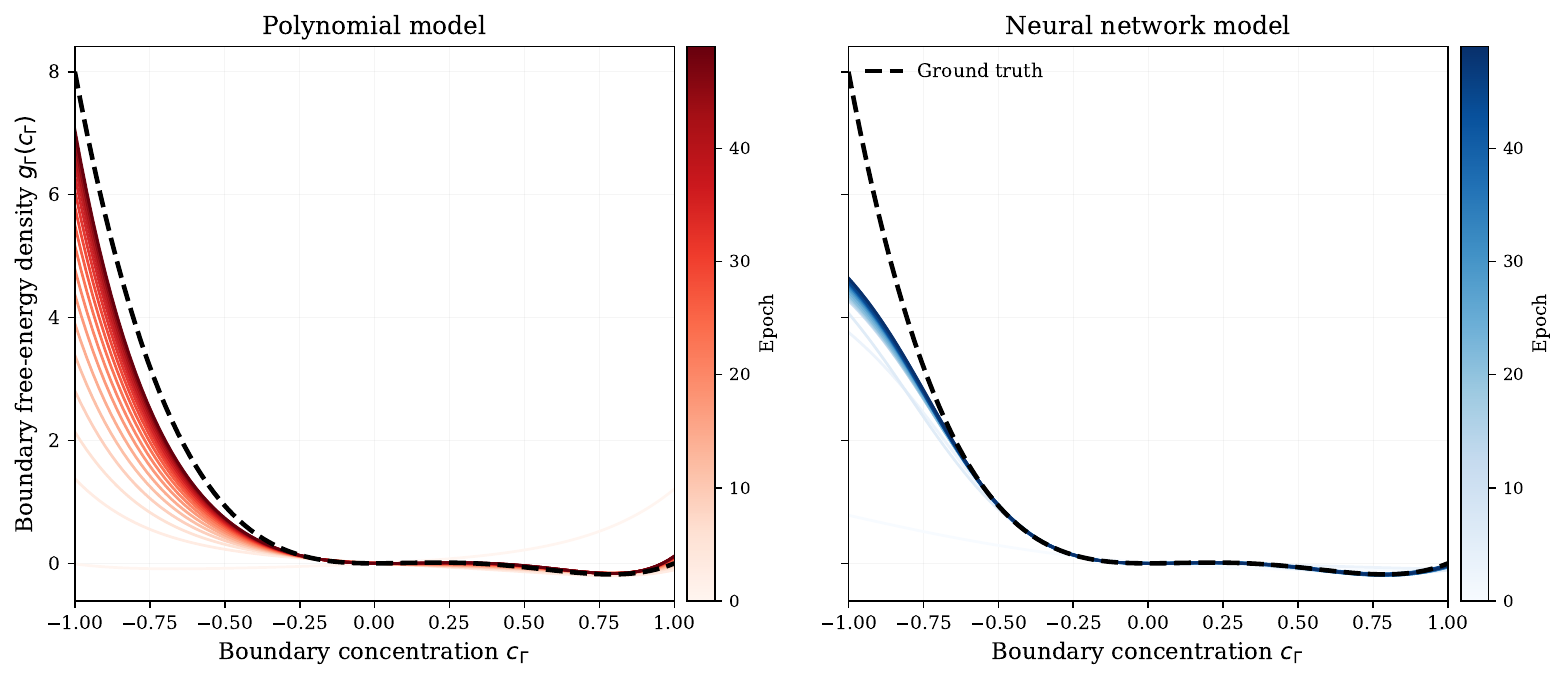}
    \end{minipage}
    \caption{Boundary potential recovery results. Left: training and validation loss curves as a function of training epochs. Right: free energy density evolving over training for both the polynomial (red) and neural network (blue) models. As before, later epochs correspond to darker curves, with ground truth overlaid as a dashed line in each case.}
    \label{fig:bc_lossdens}
\end{figure}
}

\newcommand{\figBCprofiles}{
\begin{figure}[htbp]
    \centering
    \includegraphics[width=0.85\textwidth]{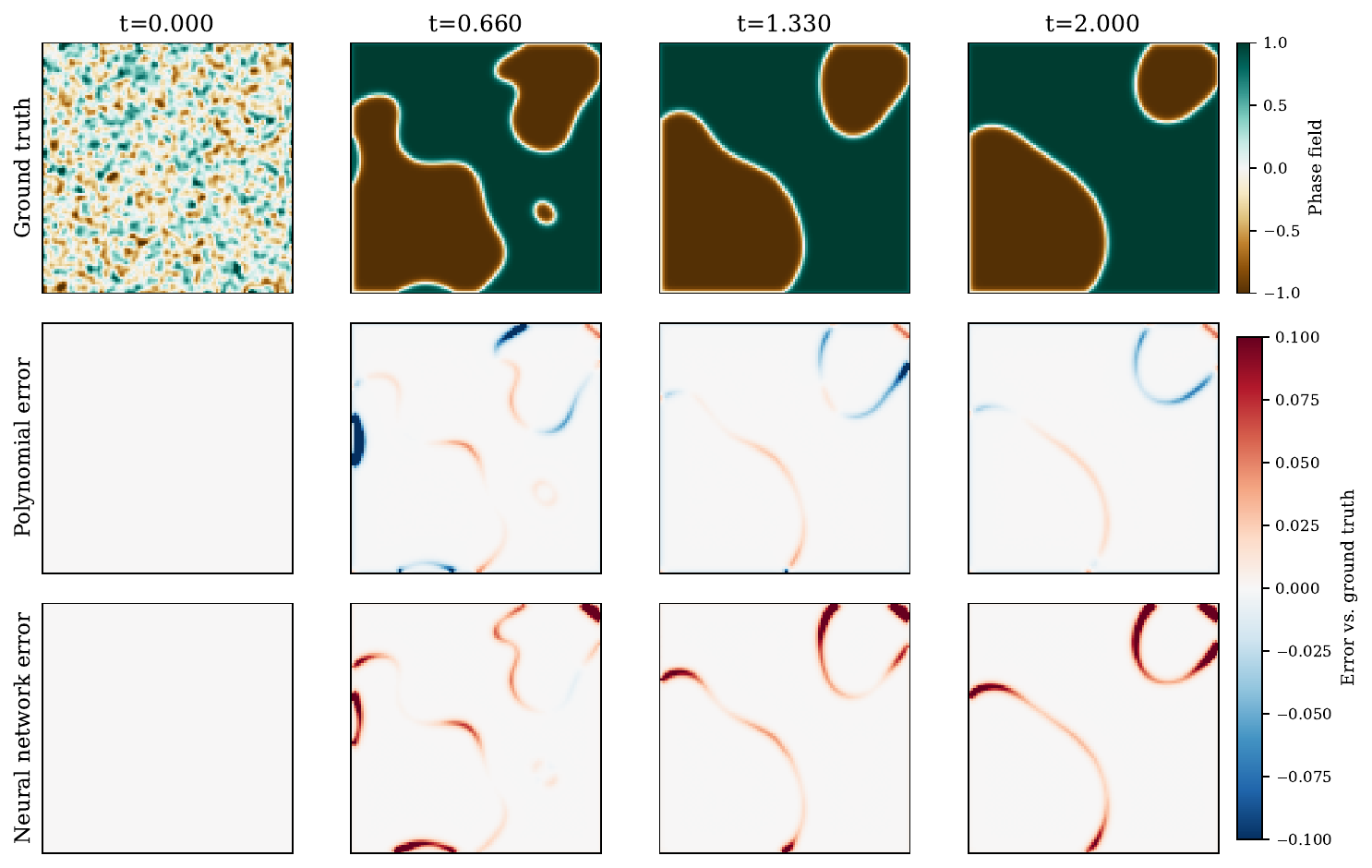}
    \caption{Evolution of the Allen-Cahn system with boundary potential starting from an initial condition in the validation set. Pictured is the ground truth evolution (top) along with the evolution of the signed errors in the case of the learned polynomial (middle) and neural network (bottom) surrogates. 
    }
    \label{fig:bc_profiles}
\end{figure}
}

\newcommand{\tabPoly}{
\begin{table}[htbp]
\caption{Coefficients of the learned polynomial free energies $F_{\theta}(C) = a_0 + a_1C + ... + a_6C^6$.}
\label{tab:poly}
\centering
\resizebox{\textwidth}{!}{
\begin{tabular}{l|l|lllllll}
\toprule
Density Type                & Model   & $a_0$ & $a_1$                & $a_2$   & $a_3$                & $a_4$   & $a_5$                & $a_6$                \\ \midrule
\multirow{2}{*}{Flory Huggins}               & Learned & 0     & $-8.28\times 10^{-4}$ & $-0.370$ & $1.23\times 10^{-3}$ & $0.236$ & -$3.97\times 10^{-4}$ & $7.42\times 10^{-2}$ \\ 
& Truth & N/A     & N/A & N/A & N/A & N/A & N/A & N/A \\ 
\midrule
\multirow{2}{*}{Quartic}    & Learned & 0     & $1.93\times 10^{-4}$ & $-0.515$ & $2.89\times 10^{-5}$ & $0.284$ & $-2.22\times 10^{-4}$ & $-1.28\times 10^{-2}$ \\
                            & Truth   & 0     & 0                    & $-0.5$  & 0                    & $0.25$  & 0                    & 0                    \\ \midrule
\multirow{2}{*}{Asymmetric} & Learned & 0     & $8.68\times 10^{-2}$ & $-0.516$ & $8.84\times 10^{-2}$ & $0.283$ & $-1.65\times 10^{-3}$ & $-1.07\times 10^{-2}$ \\
                            & Truth   & 0     & 0                    & $-0.5$  & $0.333$              & $0.25$  & 0                    & 0                    \\ \bottomrule
\end{tabular}
}
\end{table}
}

\newcommand{\tabThreeCases}{
\begin{table}[htbp]
\caption{Loss and error information corresponding to the bulk free energy density recovery experiment.}
\label{tab:threecases}
\resizebox{\textwidth}{!}{
\begin{tabular}{l|l|llll}
\toprule
Density Type                   & Model Class    & Training Loss & Validation Loss & Relative $L^2$ error ($C)$ & Relative $L^2$ error ($F_{\theta}$) \\
\midrule
\multirow{2}{*}{Flory Huggins} & Polynomial     & $1.68\times 10^{-4}$ & $1.77\times 10^{-4}$ & $4.66\times 10^{-2}$ & $2.07\times 10^{-1}$ \\
                               & Neural Network & $1.67\times 10^{-4}$ & $1.71\times 10^{-4}$ & $4.56\times 10^{-2}$ & $2.38\times 10^{-1}$ \\ \midrule

\multirow{2}{*}{Quartic}       & Polynomial     & $2.16\times 10^{-4}$ & $2.26\times 10^{-4}$ & $5.34\times 10^{-2}$ & $1.41\times 10^{-2}$ \\
                               & Neural Network & $2.11\times 10^{-4}$ & $2.14\times 10^{-4}$ & $5.89\times 10^{-2}$ & $2.43\times 10^{-2}$ \\ \midrule

\multirow{2}{*}{Asymmetric}    & Polynomial     & $2.31\times 10^{-4}$ & $2.47\times 10^{-4}$ & $5.57\times 10^{-2}$ & $1.45\times 10^{-2}$ \\
                               & Neural Network & $2.31\times 10^{-4}$ & $2.21\times 10^{-4}$ & $8.42\times 10^{-2}$ & $7.68\times 10^{-2}$ \\
\bottomrule
\end{tabular}
}
\end{table}
}

\newcommand{\figOneDevolutions}{
\begin{figure}[htbp]
    \centering
    \includegraphics[width=0.9\textwidth]{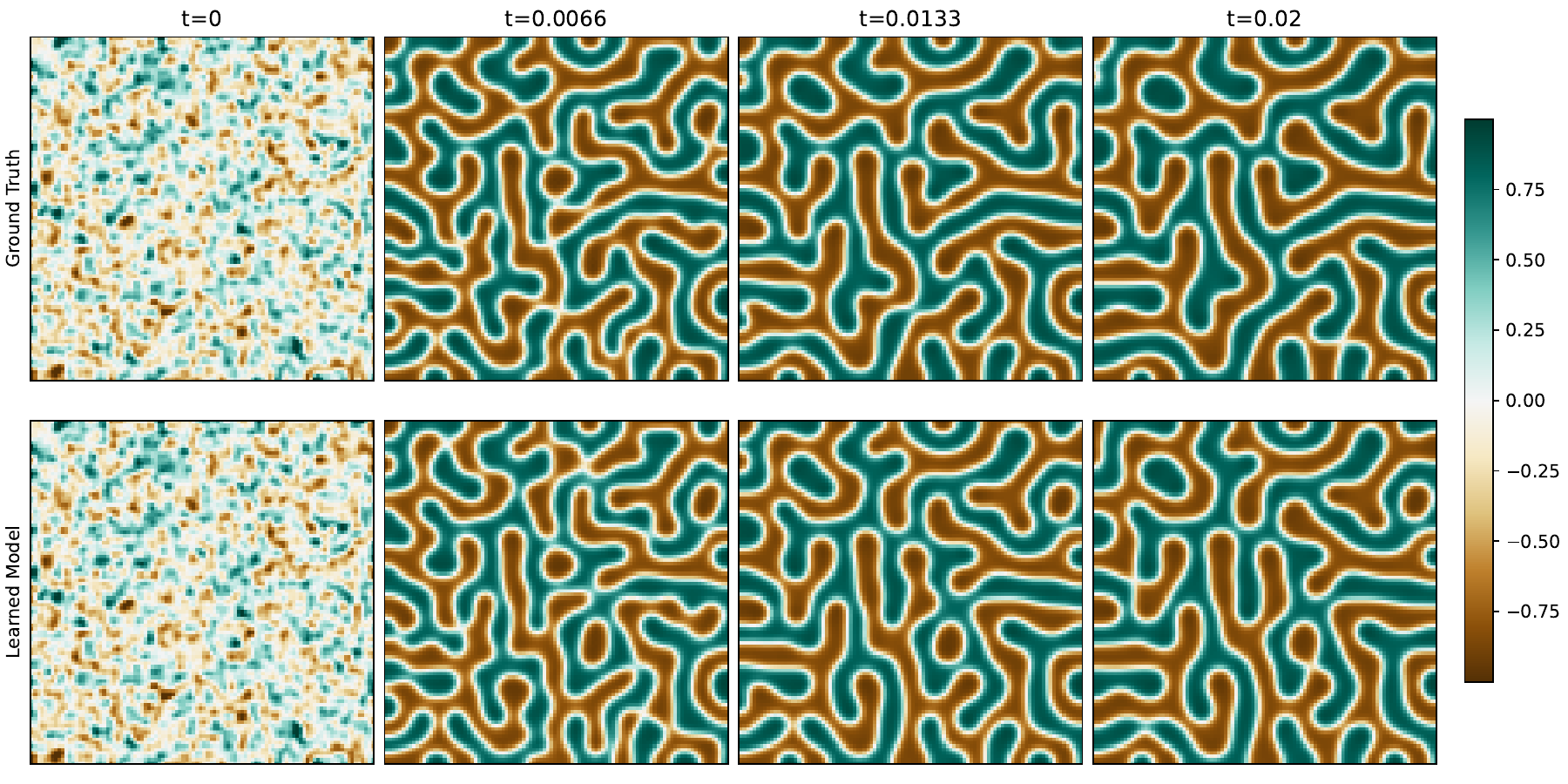}
    \caption{Evolution of the learned Ohta-Kawasaki system in Experiment 1, starting from an initial condition in the validation set and rolled out for $10\times$ the training window. Pictured is the ground truth evolution (top) along with the evolution of the learned surrogate (bottom).}
    \label{fig:OK_1D_evolutions}
\end{figure}
}

\newcommand{\figOneDeigenvalues}{
\begin{figure}[htbp]
    \centering
    \includegraphics[width=0.7\textwidth]{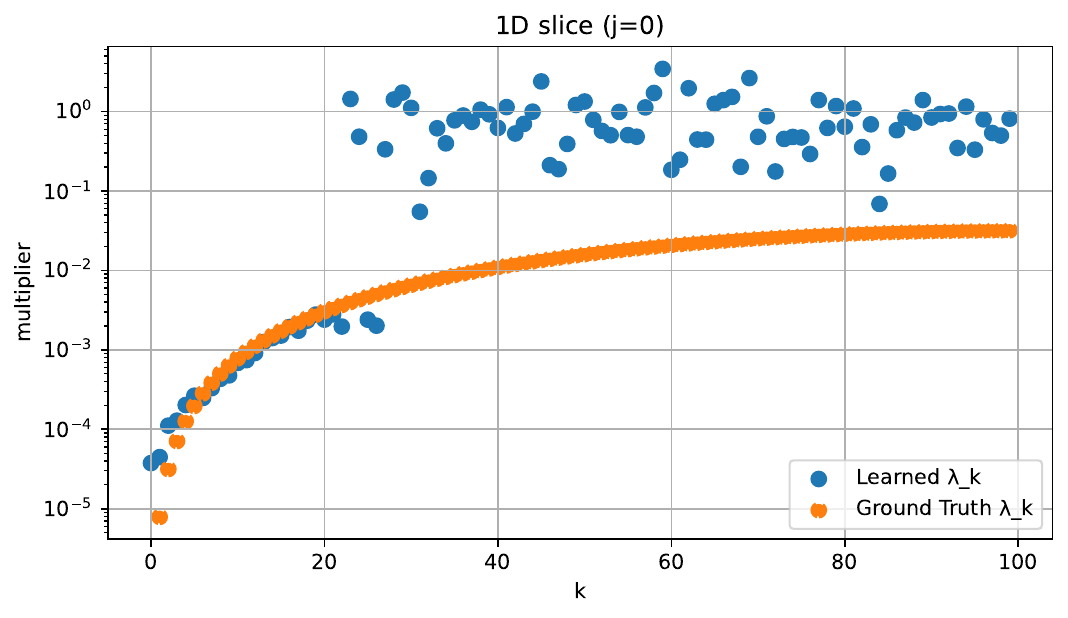}
    \caption{Eigenvalues of the Ohta-Kawasaki potential compared to the learned surrogate in Experiment 1.}
    \label{fig:OK_1D_eigenvalues}
\end{figure}
}

\newcommand{\figTwoDevolutions}{
\begin{figure}[htbp]
    \centering
    \includegraphics[width=0.9\textwidth]{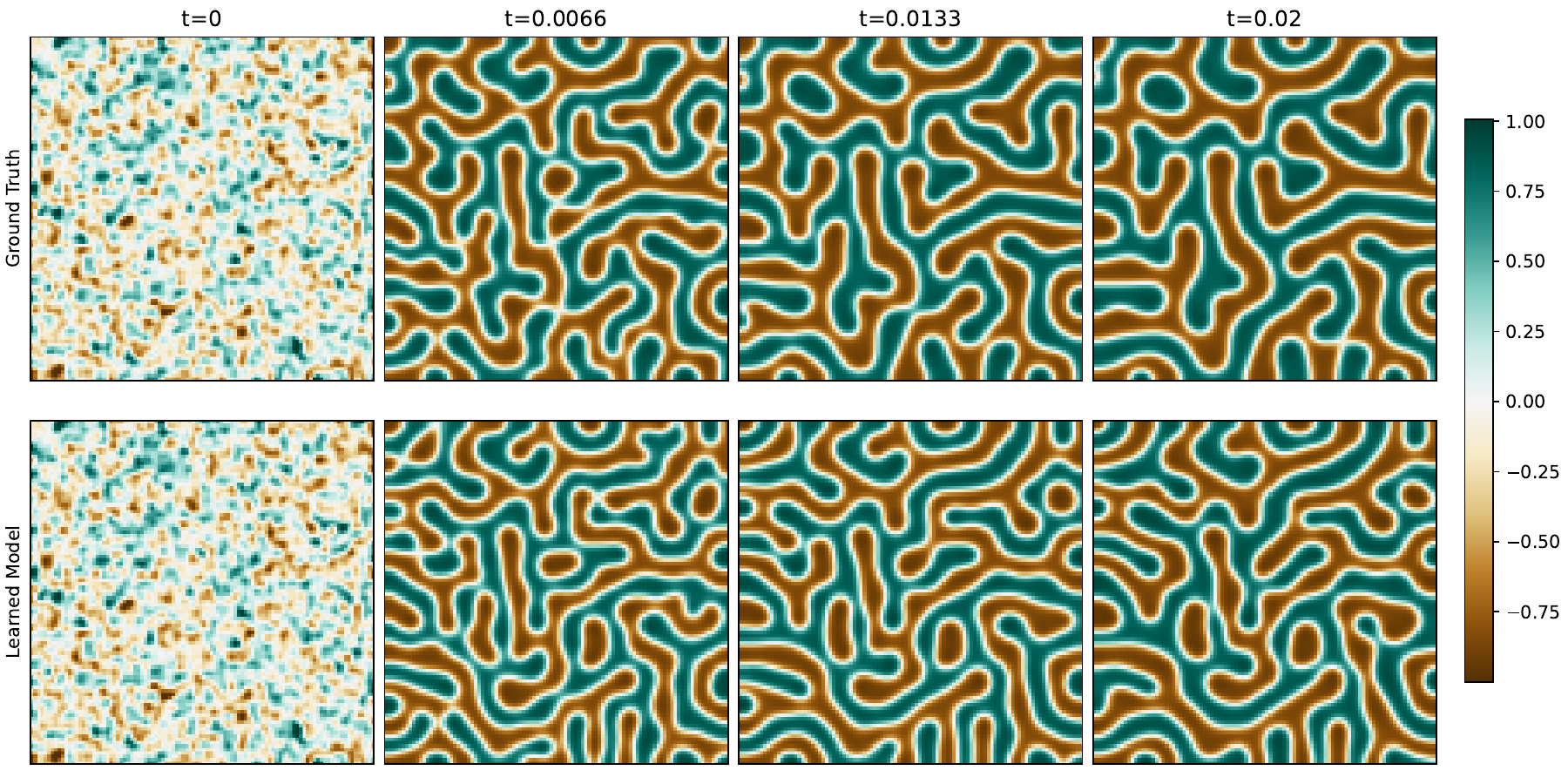}
    \caption{Evolution of the learned Ohta-Kawasaki system in Experiment 2, starting from an initial condition in the validation set and rolled out for $10\times$ the training window. Pictured is the ground truth evolution (top) along with the evolution of the learned surrogate (bottom).}
    \label{fig:OK_2D_evolutions}
\end{figure}
}

\newcommand{\figTwoDeigenvalues}{
\begin{figure}[htbp]
    \centering
    \includegraphics[width=\textwidth]{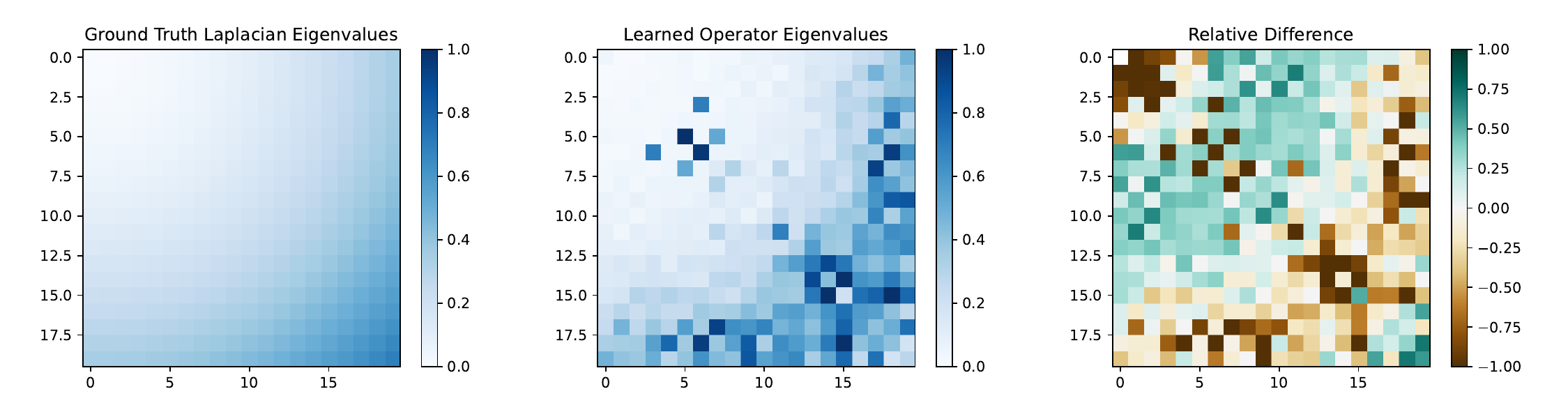}
    \caption{Spectrum of the Ohta-Kawasaki potential compared to that of the learned surrogate in Experiment 2. }
    \label{fig:OK_2D_eigenvalues}
\end{figure}
}

\definecolor{darkgreen}{rgb}{0.0, 0.5, 0.0}
\definecolor{darkblue}{rgb}{0.0, 0.0, 0.6}
\definecolor{darkred}{rgb}{0.5, 0.0, 0.0}
\hypersetup{
    colorlinks=true,
    linkcolor=darkgreen,    
    citecolor=darkblue,     
    urlcolor=darkred,       
}

\crefformat{equation}{#2(#1)#3}
\numberwithin{equation}{section}

\theoremstyle{plain}
\newtheorem{theorem}{Theorem}

\newtheorem{corollary}{Corollary}

\theoremstyle{remark}
\newtheorem{remark}{Remark}

\theoremstyle{definition}
\newtheorem{definition}{Definition}

\numberwithin{theorem}{section}
\numberwithin{proposition}{section}
\numberwithin{lemma}{section}
\numberwithin{corollary}{section}
\numberwithin{remark}{section}
\numberwithin{definition}{section}

\crefname{theorem}{Theorem}{Theorems}
\Crefname{theorem}{Theorem}{Theorems}
\crefname{proposition}{Proposition}{Propositions}
\Crefname{proposition}{Proposition}{Propositions}
\crefname{lemma}{Lemma}{Lemmata}
\Crefname{lemma}{Lemma}{Lemmata}
\crefname{corollary}{Corollary}{Corollaries}
\Crefname{corollary}{Corollary}{Corollaries}
\crefname{algorithm}{Algorithm}{Algorithms}
\Crefname{algorithm}{Algorithm}{Algorithms}
\crefname{appendix}{Appendix}{Appendices}
\Crefname{appendix}{Appendix}{Appendices}

\setminted{
    style=pastie,
    fontsize=\normalsize,
    linenos,
    frame=none
}

\begin{document} 

\pdfinfo{
   /Author (Anthony Gruber, Ritoban Roy-Chowdhury, Irina Tezaur, Nathan M. Urban)
   /Title (Flexible and Stable Dynamics Discovery with Onsager's Variational Principle)
   /Keywords (dissipative systems, dynamics discovery, gradient flow, variational integrators, structure preservation)
}

\title{\textbf{Flexible and Stable Dynamics Discovery with Onsager's Variational Principle}}

\author[1]{Anthony Gruber$^{*\dagger}$}
\author[1,2]{Ritoban Roy-Chowdhury$^\dagger$}
\author[3]{Irina Tezaur}
\author[4]{Nathan M. Urban}

\affil[1]{\normalsize Scientific Machine Learning, Center for Computing Research, Sandia National Laboratories}
\affil[2]{\normalsize Computer Science and Engineering, University of California at San Diego}
\affil[3]{\normalsize Quantitative Modeling and Software Engineering, Sandia National Laboratories}
\affil[4]{\normalsize Optimal Experimental Design \& Uncertainty Quantification, Brookhaven National Laboratory}
\date{}

\maketitle

\renewcommand{\thefootnote}{\fnsymbol{footnote}}
\maketitle
\footnotetext[1]{Corresponding author. E-mail: \href{mailto:adgrube@sandia.gov}{adgrube@sandia.gov}.}
\footnotetext[2]{These authors contributed equally to this work.}
\renewcommand{\thefootnote}{\arabic{footnote}}

\vspace{-.25in}

\begin{abstract}
\noindent
    Variational principles specify the dynamics of a physical system via the extremization of associated functional data. 
    Onsager's variational principle (OVP), which characterizes dissipation-dominated phenomena such as phase separation, admits an unconditionally energy-stable time discretization through the minimization of a Rayleighan functional combining free energy and dissipative effects.  The present work considers the case where one or more parts of this functional are empirically approximated or otherwise uncertain.  To address this, a novel variational discretization of OVP is introduced which recovers previous work as a special case, and a learning problem is formulated which identifies uncertain terms in the free energy and dissipation potential from observable data.  It is shown that the resulting OVP-based models connect directly to previous work in terms of proximal methods, Sobolev and Wasserstein gradient flows, while remaining provably energy-stable under arbitrarily long rollouts.  The approach is illustrated on examples including Allen-Cahn, Fokker-Planck, and Cahn-Hilliard system models, where the effects of bulk free-energy densities, nonlocal potentials, and nonstandard boundary conditions are effectively learned with model classes consisting of polynomials, shallow neural networks, and spectral convolution kernels. 
\end{abstract}

\noindent
\textbf{Keywords}: dissipative systems, dynamics discovery, gradient flow, variational integrators, structure preservation



\section{Introduction}

Dissipative dynamical systems model phenomena ranging from nanoscale phase separation of polymers in solution to continent-scale motion of glacial ice sheets. However, in many practical settings of interest, sufficient quantitative descriptions of these phenomena are only partially known. As a result, it has become increasingly common to integrate data-driven modeling approaches with traditional methods for direct numerical simulation. Replacing unknown components of the governing equations with learnable surrogates can help bridge the gap between idealized theory and complex real-world behavior. Such hybrid models are particularly appealing in dynamics discovery problems, where the goal is to reconstruct the evolution of a system from a set of observational data.

On the other hand, physics is not arbitrary, and trajectories produced by a physical system are inherently constrained to configurations that obey its laws.  Na\"{i}vely including data-driven closure terms in a hybrid model of governing equations is unlikely to produce dynamics which generalize appropriately in every case, and it is well known that inserting unconstrained neural networks or other maximally flexible function approximators into governing equations often destroys fundamental structural properties of the underlying dynamics \cite{gruber2025efficiently,celledoni2021structure,loya2025structure,greydanus2019hamiltonian,hu2024energetic}.  In dissipative systems, the most important structural property is energy stability, which states that the free energy of the system decays monotonically in time.  This is equivalent (via a Legendre transform) to the second law of thermodynamics: the total entropy of the system cannot decrease as time progresses.  Violation of this basic principle leads to models that may overfit to training data but fail to exhibit physically reasonable behavior when queried out-of-distribution, rendering them useless in high-consequence scenarios.  Moreover, un-physical behavior such as spurious energy growth or dynamical instability during model training may compromise the optimization process, leading to poorly discovered dynamics in the first place.

Thankfully, this issue of poor adherence to physics can be substantially mitigated with the application of an appropriate variational principle.  When the physical phenomenon under consideration admits a variational representation in terms of the extremization of functional data, there is a well defined mapping from these functionals to the system's dynamics which comes from direct application of the variational principle.  Since learning functionals, which map system states to ordinary real numbers, is typically easier than learning state-to-state mappings produced by governing equations, an attractive option is to infer unknown or uncertain parts of these functionals from observational measurements.  Besides often yielding an easier learning problem, the variational approach has the advantage of automatic physical structure-preservation: given the trained functional data, the dynamics produced by the learned model are prescribed according to well-studied physical principles.

A particularly advantageous tool, which applies to dissipation-dominated systems, is Onsager's Variational Principle (OVP).  
In particular, OVP formulates the dynamics of non-equilibrium systems as those producing the \textit{least dissipative} path to a state of minimum free energy, which instantaneously balances the energetic driving forces and the dissipative drag effects at each step.  There are numerous advantages to this perspective.  First, it is inherently modular: system dynamics are specified through  functional minimization, meaning that coupled physical mechanisms are naturally incorporated through additional terms in the free energy or dissipation potential.  This is particularly useful for both phenomenological modeling as well as data-driven discovery, since unknown or uncertain effects can be incorporated parametrically through these functionals in a variationally consistent way.  This flexibility has inspired numerous descriptions of complex physical phenomena with OVP, including soft matter processes \cite{doi2011onsager,arroyo2017onsager}, bacterial suspensions \cite{wang2021onsager}, microstructural evolution processes \cite{du2024thermodynamic}, systems with odd elasticity \cite{lin2023onsager}, viscoelastic filaments \cite{zhou2018dynamics}, and multiphase flows \cite{xu2017hydrodynamic}, to name just a few.

On the computational side, OVP also provides a privileged starting point for the design of structure-preserving numerical schemes \cite{chen2025onsager,zhu2025stokes,liu2024variational,kou2020energy,xiao2025moving,xiong2025thermodynamically}.  When OVP is respected in the time-discrete setting, the resulting dynamics inherit unconditional, step-size-independent energy stability.  This means that simulations which start stable remain stable for all future times, since the free energy forms a Lyapunov function for the system.  Moreover, this holds regardless of the step-size employed as well as discrete free energy and dissipation potentials actually realized.  In the setting of dynamics discovery, this ensures that updates to these potentials always produce well behaved and physically reasonable dynamical evolutions, stabilizing training and improving efficiency in data-limited scenarios. 

Conversely, discretizations of dissipation-dominated phenomena which do not respect OVP are prone to un-physical dynamical instability or excessive numerical smoothing \cite{stuart1989nonlinear,furihata2001stable,tang2019energy}.  Besides being an issue for direct numerical simulation, this can cause significant difficulties in the setting of inverse problems, since ``outer loop'' updates to the governing equations (or the potentials producing them) are not guaranteed to yield well behaved solutions.  While dynamics learning problems certainly benefit from stable intermediates, it is even more critical that the trained model produces dynamics which generalize consistently out-of-distribution with guaranteed properties.  In particular, physical systems which equilibrate should correspond to models which also equilibrate. 
As a step toward fully modular, structure-preserving surrogate modeling for dissipative systems, this work considers the time discretization of OVP and its use in data-driven dynamics identification. The main novelty lies in the following contributions.
\begin{itemize}
    \item A geometry-first discussion of OVP and its relationship to gradient flows in general, and to Wasserstein and Sobolev gradient flows in particular.
    \item An unconditionally energy-stable time discretization of OVP that automatically includes process constraints and recovers previous work as a special case.
    \item An algorithm for dynamics discovery using OVP that is modular with respect to physics and boundary conditions and guaranteed to dissipate free energy independently of data or training quality.
    \item Demonstration of the proposed OVP-based strategy on a variety of test cases related to phase-field modeling and nonlinear reaction-diffusion equations.
\end{itemize}
The remainder of the work is organized as follows.  \Cref{sec:perspectives} discusses OVP in detail, drawing connections to force balance equations and the principle of maximum entropy, as well as presenting new results connecting OVP to various gradient flows.  Based on this discussion, \Cref{sec:timedisc} presents a structure-preserving time discretization of OVP which is unconditionally energy-stable and recovers previous  discretizations of OVP as a special case.  With this, \Cref{sec:inverse} formulates the key learning problem built on the proposed discrete OVP, which enables dissipative dynamics discovery in an optimization-based framework.  After a discussion on spatial discretization in \Cref{sec:spacedisc}, examples of this approach are presented in \Cref{sec:numerics}, where it is shown that OVP-based dynamics discovery produces flexible and accurate surrogate models which generalize appropriately out-of-distribution.  Finally, \Cref{sec:conclusion} draws conclusions and discusses avenues for future work.



\section{Background and Perspectives}\label{sec:perspectives}

Onsager's Variational Principle (OVP) \cite{onsager1931reciprocal,onsager1931reciprocal2} is an extension of Rayleigh's least-energy dissipation principle \cite{rayleigh1871some} to general irreversible systems which take place ``near equilibrium''.  Such systems are driven by a notion of free energy while being dissipation-dominated; they lack substantive inertial effects, and their resulting velocity is determined by the strength of this dissipative force.  This provides flexibility sufficient for describing a myriad of interesting partial differential equations (PDEs), including phase field models like Allen-Cahn and Cahn-Hilliard, the Stokes' equations for highly viscous fluids, and phenomena such as heat diffusion and Fokker-Planck which admit statistical interpretations.  The following definition is adapted from \cite{arroyo2017onsager}.



\begin{definition}\label{def:OVP}
Let $Q$ be a finite or infinite dimensional configuration manifold containing state variables $~q\in Q$, and let $E\to Q$ be a vector bundle of process variables $~w\in E_{~q}$\footnote{By $F_{~q}$ we mean the bundle-valued object $F$ restricted to its fiber over $~q$.} which specify dissipation.  Suppose the following functional data are provided: 
\begin{itemize}
    \item A free energy functional $\mathcal{F}:Q \rightarrow \mathbb{R}$ that is differentiable\footnote{Note that differentiability is assumed for convenience and not required for OVP to hold.} and minimized by the system.
    \item A dissipation potential $\mathcal{D}:E \rightarrow \mathbb{R}^{+}$ that is fiber-wise convex and satisfies the normalization condition $\mathcal{D}_{~q}(~0) = 0$.
    \item A process mapping $\mathcal{P}:E \rightarrow TQ$ that is a surjective and fiber-wise linear bundle homomorphism.
\end{itemize}
Denoting the fiber of $E$ at the point $~q$ by $E_{~q}$ and the usual evaluation pairing on $TQ$ by $\dual{\cdot}{\cdot}:T^*Q\times TQ\to\mathbb{R}$, consider the Rayleighian  $\mathcal{R}:E\to\mathbb{R}$ at $~w \in E_{~q}$: 
\begin{equation}\label{eq:rayleighan}
    \mathcal{R}_{~q}(~w) = \dual{d\mathcal{F}_{~q}}{\mathcal{P}_{~q}~w} + \mathcal{D}_{~q}(~w),
\end{equation}

Observing that $~w \mapsto \mathcal{R}_{~q}(~w)$ is a convex function\footnote{To guarantee a unique minimum, we will assume the usual coercivity condition $\mathcal{R}_{~q}(~w)\to\infty$ as $\nn{~w}\to\infty$.}, Onsager's variational principle is the statement that a curve $t\mapsto ~q(t)\in Q$ in the configuration manifold satisfies the condition
\begin{equation}\label{eq:Onsager's}
    \dot{~q}(t) = \mathcal{P}_{~q(t)}~v, \qquad ~v = \argmin_{~w\in E_{~q(t)}}\,\mathcal{R}_{~q(t)}(~w).
\end{equation}
\end{definition}

\figOVP

\begin{remark}\label{rem:ext-pow}
    The Rayleighan $\mathcal{R}$ in \eqref{eq:rayleighan} is occasionally modified to include a notion of external power, i.e., $\mathcal{R}_{~q}(~w) = \dual{d\mathcal{F}_{~q}}{\mathcal{P}_{~q}~w} + \mathcal{D}_{~q}(~w) + \dual{P_{~q}}{~w}$ for some $P:E\to\mathbb{R}$ which is linear on each fiber.  Of course, minimizers of this Rayleighan do not generally lead to curves $~q\in Q$ which monotonically decrease the free energy $\mathcal{F}$.
\end{remark}

Unlike more common variational principles, such as Hamilton’s least action principle in mechanics (see e.g. \cite{marsden1994mathematical}), \Cref{def:OVP} shows that OVP does not realize system dynamics as extreme or stationary points of a particular energy functional.  Instead, the velocity $\dot{~q}(t)$ in \cref{eq:Onsager's} is defined in terms of a minimal vector $~v\in E_{~q(t)}$ in the process space, which effectively selects an updated configuration $~q(t+\delta t)$ not too far from the equilibrium state.  This choice relies on the free energy $\mathcal{F}$ and dissipation potential $\mathcal{D}$ described in \Cref{def:OVP}, as well as the process mapping $\mathcal{P}$, which expresses the change in configuration $\dot{~q}=\mathcal{P}_{~q}~w$ as a linear combination of process variables $~w$ with nonlinear coefficients $\mathcal{P}_{~q}$.  Intuitively, the process bundle $E$ contains ``microscopic'' quantities like fluxes or process rates, which are converted to changes in ``macroscopic'' quantities like concentrations and temperature fields via the process mapping $\mathcal{P}$.

In many cases of interest, the process bundle $E = TQ$ is simply the tangent bundle to the configuration manifold, and therefore the process function $\mathcal{P}=I$ is just the identity mapping.  However, it is important to note that Onsager's description of dissipation generally contains more information than necessary for specifying changes in configuration $\dot{~q}$, in which case the mapping $\mathcal{P}$ is necessarily nontrivial  \cite{arroyo2017onsager,otto2001geometry}.  
For example, the Cahn-Hilliard model (discussed in \Cref{subsec:sobolev}) satisfies a continuity equation relating a scalar concentration field $c \in Q$ with a vector flux field $c~w \in E_c$. Infinitesimal changes $\dot{c}\in T_cQ$ to the concentration field then arise from an optimization over the larger space of fluxes via the process mapping $\mathcal{P}_c(~w) = -\nabla \cdot c~w$, which encodes the conservation of mass.  More generally, dissipation in chemical kinetics may be described by a process bundle of reaction rate fields, while dissipation in viscous fluids may be described a bundle of strain-rate tensors.  In any case, OVP \Cref{def:OVP} provides a powerful algorithm for producing physically consistent models from relatively high-level descriptions of configuration and dissipation.

In contrast to the corresponding definition (47)-(48) in \cite{arroyo2017onsager}, \Cref{def:OVP} does not discuss the treatment of explicit constraints on the configuration or process variables.  However, these restrictions are straightforward to incorporate into OVP and do not complicate its application.  First, observe that nonlinear holonomic constraints $N(~q)= ~0$ on the state variables $~q\in Q$ can be linearized and expressed as constraints on the process variables, i.e., $N'(~q)~w = ~0$.  Moreover, any linear function $~C_{~q}: E_{~q} \to G_{~q}$ mapping $E$ to some bundle $G$ and defining linear constraints $~C_{~q}~w = ~0$ on the process variables $~w\in E_{~q}$, including those constraints seen above, are readily incorporated into \Cref{def:OVP} via minimization on the sub-bundle $\tilde{E}\to Q$ with fibers $\tilde{E}_{~q} = \{~w \in E_{~q}\,|\,~C_{~q}~w = ~0\}$. In practice, this often involves the use of Lagrange multipliers $\mu\in G_{~q}^*$ on the dual fibers to the image space of $~C$ and a Lagrangian $\tilde{\mathcal{R}}:E\oplus G^*\to\mathbb{R}$ defined on the Whitney (i.e., fiber-wise direct) sum of $E$ and $G^*$, 
\begin{equation}
    \tilde{\mathcal{R}}_{~q}(~w,\mu) = \mathcal{R}_{~q}(~w) - \dual{\mu}{~C_{~q}~w}.
\end{equation}
In this case, it is straightforward to show (formally) that the section of $\tilde{E}$ specified by OVP solves the saddle point problem
\begin{equation}
    ~v = \argmin_{~w\in \tilde{E}_{~q}}\,\mathcal{R}_{~q}(~w) = \argmin_{~w\in E_{~q}} \max_{\vphantom{{A^2}'}\mu\in G^*_{~q}}\,\tilde{\mathcal{R}}_{~q}(~w,\mu).
\end{equation}

Given only functional data, OVP prescribes an appropriate path in $Q$, where the first term of the Rayleighan $\mathcal{R}$ becomes the rate-of-change in free energy: for $~v$ satisfying \eqref{eq:Onsager's},
\begin{equation*}
    \dual{d\mathcal{F}_{~q(t)}}{\mathcal{P}_{~q(t)}~v} = \dual{d\mathcal{F}_{~q(t)}}{\dot{~q}(t)} = \dot{\mathcal{F}}\lr{~q(t)}.
\end{equation*} 
In fact, OVP guarantees more: the dynamics on $Q$ move states in the direction of minimum free energy.
\begin{theorem}\label{thm:continuous_ES}
    The free energy $\mathcal{F}$ decreases monotonically along solutions to OVP \eqref{eq:Onsager's}.
\end{theorem}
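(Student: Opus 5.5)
The plan is to compare the Rayleighian at its minimizer with its value at the zero process vector, which is always an admissible competitor. Fix a time $t$ along a solution curve, write $~q=~q(t)$, and let $~v\in E_{~q}$ be the minimizer in \eqref{eq:Onsager's}. The fiber $E_{~q}$ is a vector space, so $~0\in E_{~q}$. By the normalization $\mathcal{D}_{~q}(~0)=0$ and the fiber-wise linearity of $\mathcal{P}_{~q}$, we have $\mathcal{R}_{~q}(~0)=\dual{d\mathcal{F}_{~q}}{~0}+0=0$. Minimality of $~v$ then gives
\begin{equation*}
\dual{d\mathcal{F}_{~q}}{\mathcal{P}_{~q}~v}+\mathcal{D}_{~q}(~v)=\mathcal{R}_{~q}(~v)\le \mathcal{R}_{~q}(~0)=0 .
\end{equation*}

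Next I would use the identity already noted before the theorem: $\dot{\mathcal{F}}(~q(t))=\dual{d\mathcal{F}_{~q(t)}}{\dot{~q}(t)}=\dual{d\mathcal{F}_{~q(t)}}{\mathcal{P}_{~q(t)}~v}$. This follows from the chain rule and the differentiability of $\mathcal{F}$. Combining it with the inequality above yields
\begin{equation*}
\dot{\mathcal{F}}(~q(t))\le -\mathcal{D}_{~q(t)}(~v)\le 0,
\end{equation*}
since $\mathcal{D}$ takes values in $\mathbb{R}^+$. Integrating in time then gives $\mathcal{F}(~q(t_2))\le\mathcal{F}(~q(t_1))$ for all $t_1\le t_2$, which is the monotone decrease.

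For completeness, I would note the sharper identity available when $\mathcal{D}_{~q}$ is differentiable. Testing the first-order optimality condition $\dual{d\mathcal{F}_{~q}}{\mathcal{P}_{~q}~w}+d\mathcal{D}_{~q}(~v)~w=0$ at $~w=~v$ gives $\dot{\mathcal{F}}=-d\mathcal{D}_{~q}(~v)~v$. By convexity and $\mathcal{D}_{~q}(~0)=0$, this quantity is at most $-\mathcal{D}_{~q}(~v)$. For the usual quadratic $\mathcal{D}$ it equals $-2\mathcal{D}_{~q}(~v)$. This refinement is not needed for the theorem.

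I do not expect a real obstacle. The one point to handle with care is that only the comparison argument is used, so neither differentiability of $\mathcal{D}$ nor uniqueness of the minimizer is required. If process constraints are imposed through a sub-bundle $\tilde{E}$, the same argument goes through because $~0\in\tilde{E}_{~q}$. The argument also needs enough regularity of $t\mapsto~q(t)$ for the chain rule to apply. I would simply assume this, consistent with the formal level of \Cref{def:OVP}.
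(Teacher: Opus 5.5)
Your proof is correct, but its main line differs from the paper's. The paper assumes $\mathcal{D}$ is differentiable and derives the first-order optimality condition $\mathcal{P}_{\mathbf{q}}^\top d\mathcal{F}_{\mathbf{q}} = -\partial_{\mathbf{w}}\mathcal{D}_{\mathbf{q}}(\mathbf{v})$. It pairs this with $\mathbf{v}$ to get the exact identity $\dot{\mathcal{F}} = -\dual{\partial_{\mathbf{w}}\mathcal{D}_{\mathbf{q}}(\mathbf{v})}{\mathbf{v}}$. It then uses convexity and $\mathcal{D}_{\mathbf{q}}(\mathbf{0})=0$ to bound this by $-\mathcal{D}_{\mathbf{q}}(\mathbf{v})$. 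That is exactly the argument in your ``for completeness'' paragraph.

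Your zeroth-order comparison $\mathcal{R}_{\mathbf{q}}(\mathbf{v})\leq\mathcal{R}_{\mathbf{q}}(\mathbf{0})=0$ reaches the same final bound $\dot{\mathcal{F}}\leq-\mathcal{D}_{\mathbf{q}}(\mathbf{v})\leq 0$ under weaker hypotheses. It needs neither differentiability of $\mathcal{D}$ (which the paper assumes only for convenience) nor convexity; nonnegativity and the normalization suffice. So it covers nonsmooth dissipation potentials and applies to any minimizer, unique or not. It is also the continuous-time counterpart of the paper's own proof of \Cref{thm:discrete_ES}, which compares $\mathcal{R}^d$ at $\mathbf{v}$ and at $\mathbf{0}$. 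Your route therefore makes the parallel between the continuous and discrete stability results transparent.

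In exchange, the paper's route yields the exact energy-dissipation identity and the force balance \eqref{eq:onsager_optimality}. The paper reuses these immediately in the gradient-flow discussion. There, for quadratic $\mathcal{D}$, the dissipation rate equals $2\mathcal{D}_{\mathbf{q}}(\mathbf{v})$ rather than merely dominating $\mathcal{D}_{\mathbf{q}}(\mathbf{v})$.
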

\begin{proof}
    We will assume differentiability in the dissipation potential $\mathcal{D}$ for convenience. Overloading the notation $\dual{\cdot}{\cdot}$ to describe the usual evaluation pairing on $E\times E^*$, denote by $\partial_{~w}\mathcal{A}:E\to E^*$ the fiber derivative of the functional $\mathcal{A}:E\to\mathbb{R}$, defined at the point $~q\in Q$ by $\dual{\partial_{~w}\mathcal{A}_{~q}(~w)}{\mathring{~w}} = \frac{d}{ds}\mathcal{A}_{~q}(~w+s\mathring{~w})|_{s=0}$ for any variation vector $\mathring{~w} \in E_{~q}$.  It follows that the variation of the Rayleighan $\mathcal{R}$ can be computed as
    \begin{equation*}
        \dual{\partial_{~w}\mathcal{R}_{~q}(~w)}{\mathring{~w}} = \dual{d\mathcal{F}_{~q}}{\mathcal{P}_{~q}\mathring{~w}} + \dual{\partial_{~w}\mathcal{D}_{~q}(~w)}{\mathring{~w}} = \big\langle \mathcal{P}_{~q}^\top d\mathcal{F}_{~q} + \partial_{~w}\mathcal{D}_{~q}(~w)\,\big|\,\mathring{~w}\big\rangle,
    \end{equation*}
    where $\mathcal{P}^\top:T^*Q\to E^*$ is the transpose of the homomorphism $\mathcal{P}$. 
    At the minimal vector $~v = \argmin_{~w}\mathcal{R} \in E_{~q}$ in the fiber $E_{~q}$, this implies that the equation
    \begin{equation*}
        \mathcal{P}_{~q}^\top d\mathcal{F}_{~q} = -\partial_{~w}\mathcal{D}_{~q}(~v),
    \end{equation*}
    is satisfied in the dual fiber $E_{~q}^*$.  Pairing this equation with $~v$, the derivative of free energy satisfies 
    \begin{equation*}
        \dot{\mathcal{F}}_{~q}(~v) = \dual{d\mathcal{F}_{~q}}{\mathcal{P}_{~q}~v} = - \dual{\partial_{~w}\mathcal{D}_{~q}(~v)}{~v} \leq -\mathcal{D}_{~q}(~v) \leq 0,
    \end{equation*}
    where the final inequality used that $\mathcal{D}$ is nonnegative while the first inequality follows by convexity and the fact that $\mathcal{D}_{~q}$ vanishes at zero: 
    \begin{equation*}
        0 = \mathcal{D}_{~q}(~0) \geq \mathcal{D}_{~q}(~v) + \dual{\partial_{~w}\mathcal{D}_{~q}(~v)}{~0-~v} \quad \Longrightarrow \quad \dual{\partial_{~w}\mathcal{D}_{~q}(~v)}{~v} \geq \mathcal{D}_{~q}(~v).
    \end{equation*}
    This establishes that the rate-of-change $\dot{\mathcal{F}}\leq 0$ is non-positive, as desired.
\end{proof}

\begin{remark}
    It is often the case that forming the transpose $\mathcal{P}^\top$ of the process function requires an integration-by-parts on some Riemannian manifold $\mathcal{M}$.  Therefore, the definition of the process bundle $E$ should include appropriate boundary conditions when necessary (c.f., \Cref{subsec:wass_gfs}).  
\end{remark}

\Cref{thm:continuous_ES} guarantees that the trajectories produced by OVP are asymptotically stable, since the free energy $\mathcal{F}$ serves as a Lyapunov function for the dynamics.  This reflects the fact that states equilibrate under OVP, albeit in the ``least dissipative'' fashion.  This can be better understood through exploration of the optimality conditions seen in the above proof.  To that end, the remainder of this section presents two related perspectives on OVP which connect it to metric gradient flows and motivate the discretization in \Cref{sec:timedisc}.  While aspects of this discussion are known to experts in this area, the exposition, presentation, and mathematical formulation detailed here is novel.

\subsection{OVP as a Balance of Forces}
As illustrated in the proof of \Cref{thm:continuous_ES}, when $\mathcal{R}$ is differentiable, the optimality conditions for OVP \eqref{eq:Onsager's} take place in the dual $E^*$ to the process bundle\footnote{Technically, they take place in the pullback bundle $~q^*(E^*) \to \mathbb{R}_+$ over $\mathbb{R}_+$ whose fiber at $t\in\mathbb{R}_+$ is $E^*_{~q(t)}$.}:
\begin{equation}\label{eq:onsager_optimality}
    \mathcal{P}_{~q(t)}^{\top}\mathrm{d}\mathcal{F}_{~q(t)} + \partial_{~w}\mathcal{D}_{~q(t)}(~v) = 0.
\end{equation}
This can be interpreted as a balance law between the force associated to the free energy $\mathcal{F}$ and the corresponding dissipative force generated by $\mathcal{D}$. For example, consider a mass connected to a spring, with state $x\in Q \coloneqq \mathbb{R}$ given by its position in 1-dimension and process variable $w\in E_x\coloneqq \mathbb{R}$.  The spring force emerges from the potential energy $\mathcal{F}(x) = \frac{1}{2}kx^{2}$, defined in terms of an elastic constant $k>0$, while a dissipative force emerges from the potential
$\mathcal{D}_x(w) = \frac{1}{2}w^{2}$ generating the viscous effects.  Given a process function $\mathcal{P}=I$ which identifies $w\in E_x$ with the velocity $\dot{x}\in T_xQ$ tangent to the configuration $x$, OVP \Cref{def:OVP} yields the usual balance equation between the elastic and viscous forces,
\[\dot{x} + kx = 0.\] 
In this way, the potentials $\mathcal{F},\mathcal{D}$ used in OVP correspond directly to force-balanced dynamics on the state space $Q$.  However, this correspondence is not necessarily one-to-one: for instance, it is easy to see that constant shifts in these potentials do not affect the optimality condition \eqref{eq:onsager_optimality}.  

A second, infinite dimensional example described in \cite{arroyo2017onsager} is also helpful.  Consider the Stokes' equations for an incompressible Newtonian fluid in the low-Reynolds limit.  For a fluid velocity field denoted by $~w\in\Gamma(T\mathcal{M})$ where $\mathcal{M}$ is a fixed volume with boundary $\partial\mathcal{M} = \Gamma_D \cup \Gamma_N$ decomposed into fixed Dirichlet and Neumann components, these equations are:
\begin{equation}\label{eq:Stokes}
\begin{cases}
    \nabla\cdot~\sigma = ~0 & \mathrm{in}\,\,\mathcal{M}, \\
    \nabla\cdot~w = 0 & \mathrm{in}\,\,\mathcal{M}, \\
    ~w = ~w_D & \mathrm{on}\,\,\Gamma_D, \\
    ~\sigma\cdot~n = ~t & \mathrm{on}\,\,\Gamma_N.
\end{cases}
\end{equation}
Here, $~\sigma \in \Gamma(T\mathcal{M}\otimes_{\mathrm{sym}}T\mathcal{M})$ is the stress tensor of the fluid, $~n\in \Gamma(T\mathcal{M}^\perp)$ is the outward-directed unit normal to the boundary, and $~w_D, ~t\in \Gamma(T\partial\mathcal{M})$ are given Dirichlet and traction boundary conditions.  As dissipation dominates over inertial effects, the system \eqref{eq:Stokes} should be describable by OVP.  To that end, denote the fluid velocity field by $~w\in\Gamma(T\mathcal{M})$ where $\mathcal{M}$ is a fixed volume with boundary $\partial\mathcal{M}$.  Dissipation in this system is independent of the system state and generated by the potential 
\[ \mathcal{D}(~w) = \eta\int_{\mathcal{M}}\nn{\mathrm{sym}(\nabla~w)}^2\,dV, \]
where $\eta$ is the shear viscosity of the fluid and $\mathrm{sym}(\nabla~w) = (1/2)\big(\nabla~w + (\nabla~w)^\top\big)$ is the strain rate tensor.  While there is no energetic component to this system (hence, trivial configuration space $Q=\{0\}$), external power (c.f. \Cref{rem:ext-pow}) is supplied through the traction boundary condition: $\langle P\,|\,~w\rangle = -\int_{\Gamma_N}~t\cdot~w\,dS$.  It follows that the process mapping $\mathcal{P}=I$ is identity and the system Rayleighan is given by $\mathcal{R}(~w) = \mathcal{D}(~w) + \langle P\,|\,~w\rangle$, where the (trivial) process bundle $E \cong Q\times \bar{E}$ has fibers 
\[ \bar{E} = \{~w\in \Gamma(T\mathcal{M})\,|\,\nabla\cdot~w=0\,\,\mathrm{in}\,\,\mathcal{M},\,~w=~w_D\,\,\mathrm{on}\,\,\Gamma_D\}. \]
Introducing the Lagrange multiplier $p\in L^2(\mathcal{M})$ and defining the constrained Rayleighan $\tilde{\mathcal{R}}(~w,p) = \mathcal{R}(~w) - \langle p\,|\nabla\cdot~w\rangle$, OVP \eqref{eq:Onsager's} becomes the expected statement 
\[ ~v = \argmin_{~w\in\bar{E}}\max_{\vphantom{{A^2}'}p\in L^2(\mathcal{M})}\, \tilde{\mathcal{R}}(~w,p).\]
A routine calculation using the balance law \eqref{eq:onsager_optimality} immediately recovers the Stokes equations \eqref{eq:Stokes} for $~\sigma = 2\eta\,\mathrm{sym}(\nabla~v) - p~I$.

\subsection{Maximum Entropy and Gradient Flow}
Another powerful way to view OVP is as a non-equilibrium generalization of the principle of maximum entropy.  In equilibrium thermodynamics, state variables are functions of a system's internal energy $U$, volume $V$, and number of molecules $N$, or their conjugate quantities. 
For example, 
the thermodynamic free energy 
\begin{equation*}
    \mathcal{F}(T,V,N) = \inf_U\,\{U - T\,S(U,V,N) \},
\end{equation*}
is the Legendre transform (up to sign conventions) of the entropy $S$ in the internal energy variable. The fundamental postulate governing equilibrium phenomena is that systems will settle in configurations that maximize entropy, or, equivalently, minimize the free energy
\cite{friedli2018statistical,callen1998thermodynamics}.  OVP represents a non-equilibrium generalization of this principle which is genuinely dynamical: 
following the flow on the configuration space $Q$ specified by \Cref{def:OVP} prescribes the minimally dissipative path that a system state $~q$ should take to reach equilibrium in a stable and physically meaningful way \cite{onsager1931reciprocal,onsager1931reciprocal2}.

The idea of variationally prescribed dynamics inherent in OVP is intimately connected to gradient flow.  Recall that the gradient flow of a functional $\mathcal{F}:Q\to\mathbb{R}$ represents steepest descent with respect to a given metric $g\in \Gamma\lr{T^*Q\otimes_{\mathrm{sym}}T^*Q}$, which is a symmetric section of the covariant 2-tensor bundle over $Q$.  Writing $\dual{d\mathcal{F}}{\mathring{~q}} \eqqcolon g\lr{\mathrm{grad}_g\mathcal{F},\mathring{~q}}$ for any $\mathring{~q}\in\Gamma(TQ)$, this flow can be expressed in terms of the Riesz representer $\mathrm{grad}_g\mathcal{F}\in \Gamma(TQ)$ as 
\begin{equation*}
    \dot{~q}(t) = -\mathrm{grad}_g\mathcal{F}_{~q(t)},
\end{equation*}
from which is easy to see that the functional $\mathcal{F}$ decreases along solutions:
\begin{equation*}
    \dot{\mathcal{F}}(~q) = \dual{d\mathcal{F}_{~q}}{\dot{~q}} = -\nn{\mathrm{grad}_g\mathcal{F}_{~q}}_g^2 \leq 0,
\end{equation*}
where $\nn{\cdot}_g = g(\cdot,\cdot)^{1/2}$ is the norm induced by the metric $g$.  When the dissipation potential $\mathcal{D}$ is quadratic and the process mapping $\mathcal{P}$ is surjective, it turns out that OVP is naturally posed as a gradient flow with metric $h\in\Gamma\lr{T^*Q\otimes_{\mathrm{sym}}T^*Q}$ inherited from the dissipative effects.\footnote{Even when $\mathcal{D}$ is not quadratic, OVP can be interpreted as a type of generalized gradient flow \cite{mielke2016generalization}.}

To see this, observe that the derivative $\partial_{~w}\mathcal{D}:E\to E^*$ is fiber-wise linear and symmetric due to quadraticity of the potential, as well as positive definite by convexity.  Therefore,
$\flat \coloneqq \partial_{\mathbf{w}}\mathcal{D}$ is an isomorphism between the process bundle and its dual which induces a symmetric, fiber-wise inner product (i.e., metric) $g \in \Gamma\lr{E^*\otimes_{\mathrm{sym}}E^*}$: for $~w_1,~w_2\in E_{~q}$,
\begin{equation*}
    g\lr{~w_1,~w_2} \coloneqq \dual{\flat~w_1}{~w_2} = \dual{\flat~w_2}{~w_1}.
\end{equation*}
Denoting by $\sharp:E\to E^*$ the corresponding inverse mapping $\sharp=\flat^{-1}$, the optimality condition \eqref{eq:onsager_optimality} can be stated (for each $t$) as $~v = -\sharp_{~q}\mathcal{P}_{~q}^\top d\mathcal{F}_{~q}$.  It follows that \eqref{eq:Onsager's} becomes a gradient flow in terms of the transpose of the process mapping $\mathcal{P}^\top$ and the inverse metric $g^{-1}\in\Gamma\lr{E\otimes_{\mathrm{sym}}E}$ on $E^*$ satisfying $g^{-1}\lr{\flat~w_1,\flat~w_2}=g(~w_1,~w_2)$.  More precisely, OVP becomes the statement 
\begin{equation}\label{eq:OVP-GF}
    \dot{~{q}}(t) = - \mathcal{P}\sharp\mathcal{P}^{\top}\mathrm{d}\mathcal{F}\big|_{~{q}(t)}.
\end{equation}
Since $\mathcal{P}\sharp\mathcal{P}^\top:T^*Q\to TQ$ is symmetric, this is nearly a gradient flow of the free energy $\mathcal{F}$ for any process mapping $\mathcal{P}$.  It is certainly dissipative (as required by OVP), since 
\begin{equation*}
    \dot{\mathcal{F}}(~q) = \dual{d\mathcal{F}_{~q}}{\dot{~q}} = -\big\langle d\mathcal{F}_{~q}\,\big|\,\mathcal{P}\sharp\mathcal{P}^\top d\mathcal{F}_{~q}\big\rangle = -\big\langle\mathcal{P}^\top d\mathcal{F}_{~q}\,\big|\,\sharp\mathcal{P}^\top d\mathcal{F}_{~q}\big\rangle = -\big|\mathcal{P}^\top d\mathcal{F}_{~q}\big|^2_{g^{-1}} \leq 0. 
\end{equation*}
On the other hand, there is no guarantee that the mapping $\mathcal{P}\sharp\mathcal{P}^\top$ is invertible, in which case the pullback (pseudo-)metric $\tilde{h} \coloneqq  \lr{\mathcal{P}^\top}^*g^{-1}$ on $T^*Q$, defined by $\tilde{h}\lr{\omega_1,\omega_2} = g^{-1}\lr{\mathcal{P}^\top\omega_1,\mathcal{P}^\top\omega_2}$, is similarly non-invertible.  This issue is circumvented when $\mathcal{P}$ is surjective on the bundle of state velocities $TQ$, since this implies that its transpose $\mathcal{P}^\top$ is injective and therefore $\mathcal{P}\sharp\mathcal{P}^\top$ is a bundle isomorphism.  In this case $\tilde{h} = h^{-1}$ is a genuine metric tensor on $T^*Q$, and its inverse metric $h$ yields OVP as the genuine gradient flow $\dot{~q} = -\mathcal{P}\sharp\mathcal{P}^\top d\mathcal{F}|_{~q} = -\mathrm{grad}_h\mathcal{F}|_{~q}$.  Diagrammatically, this is summarized by the sequence of mappings
{\centering 
\[
T_{~q}^*Q
\;\xrightarrow{\;\;\mathcal{P}_{~q}^\top\;\;}\;
E_{~q}^*
\;\xrightarrow{\;\;\sharp_{~q}\;\;}\;
E_{~q}
\;\xrightarrow{\;\;\mathcal{P}_{~q}\;\;}\;
T_{~q}Q,
\]
\par }
that maps the free-energy covector \(d\mathcal{F}|_{~q} \in T_q^*Q\) to the process dual space by \(\mathcal{P}_{~q}^\top\), converts it into a process variable using the inverse dissipation metric \(\sharp_{~q}\), and pushes the result forward to a state velocity by \(\mathcal{P}_{~q}\) to yield the effective mobility operator \(\mathcal{P}_{~q} \sharp_{~q} \mathcal{P}_{~q}^\top : T_{~q}^*Q \to T_{~q}Q\) governing the gradient flow.

\begin{remark}
    When the dissipation potential $\mathcal{D}_{~q}$ is quadratic,  symmetry of the phenomenological kinetic coefficients $P\sharp\mathcal{P}^\top|_{~q}$ is equivalent to the celebrated ``reciprocal relations'' of Onsager  \cite{doi2011onsager, onsager1931reciprocal2}, sometimes phrased as $L_{ij}=L_{ji}$ where $\dot{q}_i = \sum_j L_{ij}\frac{\partial S}{\partial q^j}$ for entropy $S$.  
\end{remark}



This discussion shows that systems satisfying OVP are gradient flows in many cases of interest.  Conversely, it is also true that any gradient flow on an inner
product space or Riemannian manifold satisfies OVP.  This can be seen by considering a generic metric $h\in\Gamma\lr{T^*Q\otimes_{\mathrm{sym}} T^*Q}$ and its associated musical $\flat,\sharp$ isomorphisms, mapping between the bundles $TQ\leftrightarrow T^*Q$ and defined the same way as before.  It follows from the definitions of sharp and gradient that 
\[\dual{d\mathcal{F}}{\mathring{~q}} = h\lr{\mathrm{grad}_h\mathcal{F},\mathring{~q}} = \dual{\flat\,\mathrm{grad}_h\mathcal{F}}{\mathring{~q}} \qquad \forall \,\mathring{~q}\in\Gamma(TQ), \]
establishing that the $h$-gradient has the alternative expression $\mathrm{grad}_h\mathcal{F} = \sharp \, d\mathcal{F}$.  Therefore, the gradient flow equation $\dot{~q} = -\mathrm{grad}_h\mathcal{F}|_{~q} = -\sharp\, d\mathcal{F}|_{~q}$ matches \eqref{eq:OVP-GF} when the process bundle $E=TQ$ is the tangent bundle, the process mapping $\mathcal{P}=I$ is the identity, and the dissipation potential $\mathcal{D}_{~q}(~w) = \frac12\nn{~w}^2_h$ is the squared $h$-norm.  Collecting results, the discussion in this subsection has established the following formal correspondence.
\begin{theorem}\label{thm:gradflow}
    Suppose the dissipation potential $\mathcal{D}:E\to\mathbb{R}$ is quadratic and the process mapping $\mathcal{P}:E\to TQ$ is surjective.  Then, solutions to OVP are gradient flows \eqref{eq:OVP-GF} on the configuration manifold $Q$.  Conversely, any sufficiently regular gradient flow $\dot{~q} = -\mathrm{grad}_h\mathcal{F}|_{~q}$ on $Q$ defined in terms of a metric $h\in\Gamma\lr{T^*Q\otimes_{\mathrm{sym}} T^*Q}$ satisfies OVP with the choices $E=TQ$, $\mathcal{D}_{~q}(~w)= \frac12\nn{~w}_h^2$, and $\mathcal{P}=I$.
\end{theorem}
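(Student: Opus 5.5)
The plan is to prove the two directions separately, recasting the informal discussion above as a fiberwise linear-algebra argument that holds pointwise in $~q$.

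\textbf{Forward direction.} Fix $~q\in Q$. Since $\mathcal{D}_{~q}$ is quadratic, fiberwise convex, coercive and satisfies $\mathcal{D}_{~q}(~0)=0$, it has the form $\mathcal{D}_{~q}(~w)=\frac12\dual{\flat_{~q}~w}{~w}$ for a symmetric, positive definite $\flat_{~q}:E_{~q}\to E_{~q}^*$. Linear terms are excluded by nonnegativity together with $\mathcal{D}_{~q}(~0)=0$, because a linear term would make $\mathcal{D}_{~q}$ negative along some ray near $~0$. Positive definiteness, as opposed to mere semidefiniteness, comes from the coercivity assumption in \Cref{def:OVP}.

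The Rayleighan $\mathcal{R}_{~q}$ is then strictly convex and coercive, so its minimizer $~v$ is unique and is characterized by the first-order condition \eqref{eq:onsager_optimality}. Solving that condition gives $~v=-\sharp_{~q}\mathcal{P}^\top_{~q}d\mathcal{F}_{~q}$, and substituting into \eqref{eq:Onsager's} yields \eqref{eq:OVP-GF}.

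To call this a genuine gradient flow, three facts about $K_{~q}:=\mathcal{P}_{~q}\sharp_{~q}\mathcal{P}_{~q}^\top:T^*_{~q}Q\to T_{~q}Q$ are needed.
\begin{itemize}
\item $K_{~q}$ is symmetric, since $\sharp$ is symmetric.
\item $K_{~q}$ is positive semidefinite, by the identity $\dual{\omega}{K\omega}=\nn{\mathcal{P}^\top\omega}_{g^{-1}}^2$ already computed in the text.
\item $K_{~q}$ is injective. If $K\omega=0$, then pairing with $\omega$ gives $\mathcal{P}^\top\omega=0$. Surjectivity of $\mathcal{P}_{~q}$ makes $\mathcal{P}^\top_{~q}$ injective, so $\omega=0$.
\end{itemize}

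\textbf{Main obstacle.} Injectivity does not give invertibility in infinite dimensions. There I would assume, as the word ``formal'' in the paper suggests, that surjectivity is understood in a sense ensuring $\mathcal{P}^\top$ is bounded below, for example via the closed range theorem, so that $K$ is a bundle isomorphism. Given that, set $h:=\flat$-metric induced by $K^{-1}$, i.e.\ $h(\xi,\eta)=\dual{K^{-1}\xi}{\eta}$. This $h$ is symmetric and positive definite, and $\mathrm{grad}_h\mathcal{F}=K\,d\mathcal{F}$, which is verified directly from the defining relation $h(\mathrm{grad}_h\mathcal{F},\mathring{~q})=\dual{d\mathcal{F}}{\mathring{~q}}$. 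Hence $\dot{~q}=-\mathrm{grad}_h\mathcal{F}$. Smoothness of $h$ in $~q$ follows from that of $\mathcal{P}$ and $\mathcal{D}$, within the stated ``sufficiently regular'' hypothesis.

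\textbf{Converse.} Take $E=TQ$, $\mathcal{P}=I$ (trivially surjective) and $\mathcal{D}_{~q}(~w)=\frac12\nn{~w}_h^2$. This $\mathcal{D}$ is nonnegative, vanishes at zero, is strictly convex, and is coercive, so \Cref{def:OVP} applies. The Rayleighan $\dual{d\mathcal{F}}{~w}+\frac12 h(~w,~w)$ has the unique critical point $~v$ satisfying $d\mathcal{F}+\flat_h~v=0$, that is, $~v=-\sharp_h d\mathcal{F}=-\mathrm{grad}_h\mathcal{F}$, using the identity $\mathrm{grad}_h\mathcal{F}=\sharp\,d\mathcal{F}$ derived above. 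Therefore $\dot{~q}=\mathcal{P}~v=-\mathrm{grad}_h\mathcal{F}$, and every such gradient flow satisfies \eqref{eq:Onsager's}.
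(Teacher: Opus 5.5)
Your proposal is correct and follows essentially the same route as the paper. A quadratic $\mathcal{D}$ gives the fiberwise isomorphism $\flat=\partial_{\mathbf{w}}\mathcal{D}$. The optimality condition then yields $\mathbf{v}=-\sharp\mathcal{P}^\top d\mathcal{F}$, and surjectivity of $\mathcal{P}$ makes $K=\mathcal{P}\sharp\mathcal{P}^\top$ an isomorphism whose inverse defines $h$. Your $h(\xi,\eta)=\dual{K^{-1}\xi}{\eta}$ is exactly the inverse of the paper's pullback metric $(\mathcal{P}^\top)^*g^{-1}$. The converse is the identity $\mathrm{grad}_h\mathcal{F}=\sharp\, d\mathcal{F}$ with $E=TQ$ and $\mathcal{P}=I$.

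The extra details you supply make explicit what the paper leaves formal, without changing the argument. These are: excluding linear terms in $\mathcal{D}$, proving injectivity of $K$ by pairing with $\omega$, and noting that injectivity gives invertibility automatically only in finite dimensions.
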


\Cref{thm:gradflow} shows that solutions to OVP are closely related to gradient flows on the configuration space $Q$.  For example, given $Q = \{c\in C^{\infty}(\mathcal{M}): \partial_{~n}c=0\}$, it is well known that the diffusion equation \(\dot{c} = \Delta c\) is the \(L^{2}\)-gradient flow of the Dirichlet energy $\mathcal{F}(c) = \frac12\int_{\mathcal{M}}|\nabla c|^{2}\,dV$, and therefore satisfies OVP with the choices $\mathcal{D}_c(w) = \frac12\int_\mathcal{M} |w|^2\,dV$ and $\mathcal{P}=I$.  On the other hand, even when OVP is equivalent to specifying a gradient flow, it offers a more granular view of the involved physics.  Phenomenologically, this is often useful: the free energy $\mathcal{F}$, dissipation potential $\mathcal{D}$, and process mapping $\mathcal{P}$ can be designed based on physical principles or learned from data, and OVP can be applied to produce stable and meaningful dynamics.  This degree of control over the constituent potentials can have benefit- even in the case that the resulting equations of motion are the same.  In the case of the diffusion equation, it has been argued (see \cite{otto2001geometry,arroyo2017onsager}) that the Dirichlet energy is not the correct physical mechanism by which free energy is dissipated: the free energy density of a solute with concentration $c$ should take the form $c\log c$ up to solubility constants.  This leads naturally to the idea of Wasserstein gradient flows, which will now be discussed in the context of several examples.

\subsubsection{Wasserstein Gradient Flows}\label{subsec:wass_gfs}
A more enlightening formulation of certain systems satisfying OVP, including simple diffusion, can be derived by considering Wasserstein gradient flows (WGFs) \cite{otto2001geometry}. To that end, consider $Q = \{c\in C^{\infty}_{+}\left( \mathcal{M} \right): \int_{\mathcal{M}} c\,dV = 1\}$ the configuration space of smooth and non-negative concentration fields $c\geq 0$ integrating to one on a manifold $\mathcal{M}$ with boundary $\partial\mathcal{M}$, as well as the process bundle $E$ with fibers $E_{c} =\{~w \in \Gamma(T\mathcal{M}): ~w\cdot ~n = 0\text{  on  }\partial\mathcal{M}\}$ consisting of (smooth) vector fields tangent to the boundary.  These fields affect the concentration by the continuity equation, encoded as $\dot{c} = \mathcal{P}_c ~w \coloneqq -\nabla\cdot c~w$ in terms of the process mapping $\mathcal{P}:E\to TQ$, which describes the conservation of mass for a substance of concentration $c$.  Observe that $\mathcal{P}$ is surjective since the tangent space to $Q$ at $c$ is $T_cQ = \{\mathring{c}\in C^{\infty}(\mathcal{M}): \int_{\mathcal{M}}\mathring{c}\,dV=0$\} (solve the Neumann problem $-\nabla\cdot c\nabla\phi = \mathring{c}$ and set $~w=\nabla\phi$).  Given the dissipation potential $\mathcal{D}_c\left(~w \right) = \frac{1}{2}\int_{\mathcal{M}}c\left| ~w \right|^{2}\,\mathrm{d}V$, the following result describes gradient flow generated by OVP. 


\begin{theorem}\label{thm:wasserstein-grad-flow}
    For any free energy $\mathcal{F}:Q \rightarrow \mathbb{R}$, the choices of $Q,E,\mathcal{D},\mathcal{P}$ above yield a gradient flow via OVP in terms of the chemical potential $\mu(c) = \mathrm{grad}_{L^2}\,\mathcal{F}|_c$: 
    \begin{equation*}
        \dot{c} = \nabla \cdot c\nabla\mu(c).
    \end{equation*}
\end{theorem}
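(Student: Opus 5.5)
The plan is to compute the Rayleighan explicitly, derive its first-order optimality condition via integration by parts, and read off the minimizer. Since $\mathcal{D}_c$ is quadratic and $\mathcal{P}_c$ is surjective, \Cref{thm:gradflow} already guarantees that the resulting flow is a gradient flow. So what remains is to identify the minimizer $~v$ and the resulting velocity $\dot c = \mathcal{P}_c~v$ concretely.

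\textbf{Step 1: rewrite the energetic term.} For $~w\in E_c$, identify $d\mathcal{F}_c$ with the $L^2$ gradient $\mu(c)$. Integrating by parts, and using $~w\cdot~n=0$ on $\partial\mathcal{M}$ to kill the boundary term, gives
\begin{equation*}
\dual{d\mathcal{F}_c}{\mathcal{P}_c~w} = -\int_{\mathcal{M}}\mu(c)\,\nabla\cdot(c~w)\,dV = \int_{\mathcal{M}} c\,\nabla\mu(c)\cdot~w\,dV.
\end{equation*}
Hence the Rayleighan is
\begin{equation*}
\mathcal{R}_c(~w)=\int_{\mathcal{M}} c\left(\nabla\mu\cdot~w+\tfrac12|~w|^2\right)dV.
\end{equation*}
This is exactly where the boundary condition built into $E_c$ is used, identifying $\mathcal{P}_c^\top\mu = c\nabla\mu$.

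\textbf{Step 2: minimize.} Complete the square pointwise:
\begin{equation*}
\mathcal{R}_c(~w)=\tfrac12\int_{\mathcal{M}} c\,|~w+\nabla\mu|^2\,dV-\tfrac12\int_{\mathcal{M}} c\,|\nabla\mu|^2\,dV.
\end{equation*}
The minimizer is therefore $~v=-\nabla\mu(c)$, at least where $c>0$. Equivalently, the balance law \eqref{eq:onsager_optimality} reads $c\nabla\mu + c~v=0$. Substituting into $\dot c=\mathcal{P}_c~v = -\nabla\cdot(c~v)$ gives $\dot c=\nabla\cdot c\nabla\mu(c)$, as claimed.

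\textbf{Main obstacle.} The delicate point is admissibility and uniqueness of the minimizer. First, $-\nabla\mu$ must lie in $E_c$, that is, satisfy $\nabla\mu\cdot~n=0$ on $\partial\mathcal{M}$. Second, the weighted norm degenerates where $c=0$, so the minimizer is only determined on $\{c>0\}$; this ambiguity does not matter, since only $c~v$ enters $\dot c$.

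To handle admissibility, I would argue the other way round. Rather than minimizing over all of $E_c$, impose the constraint and look at the Euler--Lagrange condition $\int c(\nabla\mu+~v)\cdot\mathring{~w}\,dV=0$ for all $\mathring{~w}\in E_c$. Taking $\mathring{~w}$ arbitrary in the interior forces $c(~v+\nabla\mu)=0$ in $\mathcal{M}$. Then $c~v\cdot~n=0$ on the boundary is inherited from membership in $E_c$, which amounts to the natural no-flux condition $c\,\partial_{~n}\mu=0$. I would state this as the implicit boundary condition accompanying the PDE, consistent with the remark preceding the theorem about building boundary conditions into $E$. Finally, the gradient-flow structure follows formally from \Cref{thm:gradflow}, with mobility $\mathcal{P}\sharp\mathcal{P}^\top\mu=-\nabla\cdot c\nabla\mu$. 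That is the Otto--Wasserstein metric tensor, which justifies the name.
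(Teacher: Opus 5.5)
Your proposal is correct and is essentially the paper's argument: the same integration by parts against the tangential constraint $\mathbf{w}\cdot\mathbf{n}=0$ gives $\mathcal{P}_c^\top\mu = c\nabla\mu$, the dissipation acts as multiplication by $c$, and the balance law $c(\nabla\mu+\mathbf{v})=0$ yields $\dot c = -\nabla\cdot(c\mathbf{v}) = \nabla\cdot c\nabla\mu$. Where the paper composes $\mathcal{P}\sharp\mathcal{P}^\top$ with $\sharp_c = c^{-1}$, you complete the square instead, which avoids dividing by $c$ and makes explicit the no-flux condition $c\,\partial_{\mathbf{n}}\mu = 0$ that the paper's formal proof leaves implicit.
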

\begin{proof}
    The result is a direct calculation using the gradient flow equation \eqref{eq:OVP-GF} along with the usual Riesz isomorphisms on $L^2$.  For completeness, a proof which does not rely on this additional structure is included in \Cref{app:omitted}. 
    
    The following observation will simplify things considerably.  Since the fibers $T_cQ$ are Hilbert spaces under the $L^2(\mathcal{M})$ inner product, it is convenient to identify the dual fibers $T_c^*Q\cong T_cQ$ with spaces of scalar fields.  Similarly, the dual fibers $E^*_c \cong E_c$ may be identified with spaces of vector fields under the $L^2(\mathcal{M},T\mathcal{M})$ inner product.  With this, the fiber derivative $\partial_{~w}\mathcal{D}:E\to E^*\cong E$ of $\mathcal{D}$ can be computed at the point $c\in Q$ by considering 
    \begin{equation*}
        \ip{\partial_{~w}\mathcal{D}_c(~w)}{\mathring{~w}} = \int_{\mathcal{M}} c~w\cdot\mathring{~w}\,\,dV = \ip{c~w}{\mathring{~w}},
    \end{equation*}
    establishing that the fiber-wise flat operator $\flat_c = \partial_{~w}\mathcal{D}_c$ is just multiplication by $c$.  Therefore, its inverse is given (at least formally) by $\sharp_c~w = c^{-1}~w$, and it remains to calculate the transpose of the process mapping $\mathcal{P}:E\to TQ$.  To that end, observe that for any $\mathring{c}\in T_cQ$,
    \begin{equation*}
        \ip{\mathring{c}}{\mathcal{P}_c{~w}} = -\int_\mathcal{M} \mathring{c}\nabla\cdot c{~w}\, dV = \int_\mathcal{M} c\nabla \mathring{c}\cdot{~w}\, dV,
    \end{equation*}
    since $~w\cdot~n=0$ on $\partial\mathcal{M}$ by construction.
    It follows that $\mathcal{P}^\top_c = c\nabla$, and combining this with the identification of $d\mathcal{F}$ with its $L^2$ Riesz representative $\mu = \mathrm{grad}_{L^2}\,\mathcal{F}$ finally yields
    \begin{equation*}
        \mathcal{P}\sharp\mathcal{P}^\top d\mathcal{F}\big|_c = \nabla\cdot cc^{-1}c\nabla\,\mathrm{grad}_{L^2}\,\mathcal{F}(c) = \nabla \cdot c\nabla\mu(c),
    \end{equation*}
    as claimed.  
\end{proof}

    

\begin{remark}
    Observe that points where $c=0$ can be handled without recourse to the nonexistent sharp operator $\sharp$ by observing that $c~v = -c\nabla\mu(c)$ and directly applying $\dot{c} = -\nabla\cdot c~v$.  More formally, $E_c$ should be thought of as the space of ``tangential fluxes'' $~j\in\Gamma(T\mathcal{M})$ with $~j\cdot~n =0$ on $\partial\mathcal{M}$. Then, $\mathcal{P}_c~w = -\nabla\cdot~j$ and the only division by $c$ occurs in the dissipation $\mathcal{D}_c(~j) = \frac{1}{2}\int_\mathcal{M}\frac{|~j|^2}{c}$ which can be defined in an extended sense.
\end{remark}


Interestingly, it can be shown that the term $\nabla\cdot c\nabla\mu(c)$ on the right-hand side of the gradient flow in \Cref{thm:wasserstein-grad-flow} is exactly the Wasserstein gradient $\mathrm{grad}_W\,\mathcal{F}$ of the free energy.  This provides a link between WGFs and OVP which is formalized in the following corollary.

\begin{corollary}
    Consider $Q$ as the space of probability densities with the Wasserstein inner product $W\in\Gamma\lr{T^*Q\otimes_{\mathrm{sym}}T^*Q}$ (see e.g. \cite[Section 8.2]{villani2021topics}), defined for tangent vectors $\mathring{c}_1,\mathring{c}_2\in T_c Q$ as
    \begin{equation*}
        W\lr{\mathring{c}_1,\mathring{c}_2} = \int_{\mathcal{M}} c\nabla C_1\cdot\nabla C_2\,dV \qquad \mathrm{where}\quad -\nabla\cdot c\nabla C_k=\mathring{c}_k, \quad \partial_{~n}C_k = 0, \quad k=1,2.
    \end{equation*}
    For any free energy $\mathcal{F}:Q\to\mathbb{R}$, OVP with $E,\mathcal{D},\mathcal{P}$ as in \Cref{thm:wasserstein-grad-flow} recovers the corresponding Wasserstein gradient flow:
    \begin{equation*}
        \dot{c} = -\mathrm{grad}_{W}\mathcal{F}\big|_{c} = \nabla\cdot c\nabla\,\mathrm{grad}_{L^2}\mathcal{F}\big|_{c}.
    \end{equation*}
\end{corollary}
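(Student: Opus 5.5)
Since \Cref{thm:wasserstein-grad-flow} already shows that OVP with the stated $E,\mathcal{D},\mathcal{P}$ produces $\dot c = \nabla\cdot c\nabla\mu(c)$ with $\mu = \mathrm{grad}_{L^2}\mathcal{F}|_c$, the whole task is to identify this right-hand side with $-\mathrm{grad}_W\mathcal{F}|_c$. By definition, $\mathrm{grad}_W\mathcal{F}|_c\in T_cQ$ is the unique mean-zero field $\xi$ with $W(\xi,\mathring{c}) = \dual{d\mathcal{F}_c}{\mathring{c}}$ for all $\mathring{c}\in T_cQ$. The candidate is $\xi = -\nabla\cdot c\nabla\mu$. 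It has mean zero by the divergence theorem, provided the boundary flux $c\,\partial_{~n}\mu$ vanishes. This is exactly the natural boundary condition coming from the requirement $~v\cdot~n=0$ for $~v=-\nabla\mu$. So I will impose $\partial_{~n}\mu=0$ as part of the regularity assumptions.

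Next I evaluate $W$. For $\xi$, the potential solving $-\nabla\cdot c\nabla C_\xi = \xi$ with $\partial_{~n}C_\xi=0$ is just $C_\xi=\mu$, up to an additive constant, and constants do not affect $W$. For an arbitrary $\mathring{c}$, let $C$ solve $-\nabla\cdot c\nabla C = \mathring{c}$ with $\partial_{~n}C=0$. Then
\[
W(\xi,\mathring{c}) = \int_{\mathcal{M}} c\nabla\mu\cdot\nabla C\,dV = -\int_{\mathcal{M}}\mu\,\nabla\cdot c\nabla C\,dV = \int_{\mathcal{M}}\mu\,\mathring{c}\,dV = \dual{d\mathcal{F}_c}{\mathring{c}}.
\]
The integration by parts drops its boundary term because $\partial_{~n}C=0$. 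The last equality is the definition of the $L^2$ gradient restricted to mean-zero variations; the ambiguity of $\mu$ up to a constant is harmless here since $\int\mathring{c}=0$. Uniqueness of the Riesz representer then gives $\mathrm{grad}_W\mathcal{F}|_c = -\nabla\cdot c\nabla\mu$, hence $\dot c = -\mathrm{grad}_W\mathcal{F}|_c$.

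An alternative route is to invoke \Cref{thm:gradflow} directly. One computes the pulled-back metric $h$ with $h^{-1}(\omega_1,\omega_2)=g^{-1}(\mathcal{P}^\top\omega_1,\mathcal{P}^\top\omega_2)=\int c\nabla\omega_1\cdot\nabla\omega_2$, and checks that $h=W$ under the identification $\omega_k\leftrightarrow C_k$. I would mention this only as a consistency check.

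The main obstacle is well-posedness of the weighted Neumann problem $-\nabla\cdot c\nabla C=\mathring{c}$. It is needed both for $W$ to be defined and for the uniqueness argument. It requires $c$ bounded away from zero, or else a weighted Sobolev setting, together with the compatibility condition $\int\mathring{c}=0$, which holds on $T_cQ$. Since the corollary is stated formally, as is the paper's practice, I would assume $c>0$ and smooth. Under that assumption, Lax--Milgram on $H^1/\mathbb{R}$ with the coercive form $\int c\nabla C\cdot\nabla\phi$ gives existence and uniqueness up to constants, and I would not pursue the degenerate case.
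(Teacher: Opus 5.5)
Your proposal is correct and is essentially the paper's proof: both rest on the same integration by parts $\int_{\mathcal{M}} c\nabla A\cdot\nabla C\,dV = -\int_{\mathcal{M}} A\,\nabla\cdot (c\nabla C)\,dV = \int_{\mathcal{M}} A\,\mathring{c}\,dV$, tested against an arbitrary $\mathring{c}\in T_cQ$. The only difference is direction. The paper starts from $\mathring{a}=\mathrm{grad}_W\mathcal{F}|_c$ with Neumann potential $A$ and reads off $A=\mathrm{grad}_{L^2}\mathcal{F}|_c$ up to a constant, tacitly assuming the Wasserstein gradient exists (which already forces $\partial_{\mathbf{n}}\mu=0$); you instead verify the candidate $-\nabla\cdot c\nabla\mu$ and invoke uniqueness, which makes the hypotheses $\partial_{\mathbf{n}}\mu=0$ and $c>0$ explicit.
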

\begin{proof}
    The fact that OVP recovers the stated gradient flow is the content of \Cref{thm:wasserstein-grad-flow}.  It remains to show that the right-hand side is also the Wasserstein gradient, i.e,  $\mathrm{grad}_W\mathcal{F}|_{c} = -\nabla\cdot c\nabla\,\mathrm{grad}_{L^2}\mathcal{F}|_{c}$.  To that end, denote $\mathring{a} = \mathrm{grad}_W\mathcal{F}|_c$ with $-\nabla\cdot c\nabla A = \mathring{a}$ and observe that for any $\mathring{c}\in T_cQ$,
    \begin{align*}
        \ip{\mathrm{grad}_{L^2}\mathcal{F}|_c}{\mathring{c}} = \int_\mathcal{M} c\nabla A \cdot \nabla C\,dV = -\int_{\mathcal{M}} A\nabla\cdot{c\nabla C}\,dV = \ip{A}{\mathring{c}}.
    \end{align*}
    Therefore, $A = \mathrm{grad}_{L^2}\mathcal{F}|_c$, so $\mathrm{grad}_W\mathcal{F}|_c = \mathring{a} = -\nabla\cdot c\nabla\,\mathrm{grad}_{L^2}\mathcal{F}|_c$ as desired.
\end{proof}

This result justifies the claim that solutions to OVP are WGFs for particular choices of functional data. Another immediate corollary of \Cref{thm:wasserstein-grad-flow} gives the Fokker-Planck, diffusion, and porous medium equations as WGF solutions to OVP.

\begin{corollary}\label{cor:wgf-examples}
    Given a potential $U:\mathcal{M} \rightarrow \mathbb{R}$, the free energy $\mathcal{F}(c) = \int_{\mathcal{M}}c\log c + cU\text{   }\mathrm{d}V$ along with the choices of $Q,E,\mathcal{D},\mathcal{P}$ in \Cref{thm:wasserstein-grad-flow} yield the Fokker-Planck equation $\dot{c} = \Delta c + \nabla \cdot (c\nabla U)$ through OVP, which reduces to the diffusion equation when $U\equiv 0$.  Using the free energy $\mathcal{F}(c) = \frac{1}{m-1}\int_{\mathcal{M}}c^{m}\,dV$ instead yields the porous medium equation $\dot{c} = \Delta c^{m}$.
\end{corollary}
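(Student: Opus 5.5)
The plan is to apply \Cref{thm:wasserstein-grad-flow} directly. That result reduces OVP with the Wasserstein data $Q,E,\mathcal{D},\mathcal{P}$ to the flow $\dot{c} = \nabla\cdot c\nabla\mu(c)$, where $\mu = \mathrm{grad}_{L^2}\mathcal{F}|_c$. Each claim therefore amounts to computing the $L^2$ gradient of the stated free energy and simplifying $c\nabla\mu$.

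For the Fokker--Planck free energy $\mathcal{F}(c) = \int_{\mathcal{M}} c\log c + cU\,dV$, I would take a variation $\mathring{c}\in T_cQ$ and differentiate under the integral:
\begin{equation*}
\frac{d}{ds}\mathcal{F}(c+s\mathring{c})\Big|_{s=0} = \int_{\mathcal{M}} (\log c + 1 + U)\,\mathring{c}\,dV,
\end{equation*}
so that $\mu(c) = \log c + 1 + U$. Since $\int_\mathcal{M}\mathring{c}\,dV = 0$, the constant $1$ is immaterial; in any case it disappears under $\nabla$. This gives $c\nabla\mu = c\,\nabla c/c + c\nabla U = \nabla c + c\nabla U$, and hence $\dot{c} = \Delta c + \nabla\cdot(c\nabla U)$. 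Setting $U\equiv 0$ yields the diffusion equation.

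For the porous medium case, $\mathcal{F}(c) = \frac{1}{m-1}\int_\mathcal{M} c^m\,dV$ gives $\mu = \frac{m}{m-1}c^{m-1}$. Then $c\nabla\mu = m c^{m-1}\nabla c = \nabla c^m$, so $\dot{c} = \Delta c^m$.

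The only genuinely delicate point is that $\log c$, and hence $\mu$, is singular where $c=0$. In keeping with the formal level of \Cref{thm:wasserstein-grad-flow}, I would assume $c>0$ for the Fokker--Planck computation. Alternatively, I would invoke the preceding remark on fluxes $~j = -c\nabla\mu = -(\nabla c + c\nabla U)$, which remain well defined even where $c$ vanishes. I would also note two consistency checks. First, the Neumann-type condition $~w\cdot~n=0$ built into $E_c$ becomes the natural no-flux boundary condition $(\nabla c + c\nabla U)\cdot~n = 0$. Second, the resulting flow stays in $Q$, since the right-hand side integrates to zero by the divergence theorem.
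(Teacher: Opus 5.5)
Your proposal is correct and takes the same approach as the paper, which treats this as an immediate consequence of \Cref{thm:wasserstein-grad-flow}: compute $\mu = \log c + 1 + U$ (resp.\ $\mu = \frac{m}{m-1}c^{m-1}$) and simplify $c\nabla\mu$ to $\nabla c + c\nabla U$ (resp.\ $\nabla c^m$). One small caveat on your side remark: the fact that $\int_{\mathcal{M}}\nabla\cdot(c\nabla\mu)\,dV = 0$ only shows that the mass constraint in $Q$ is preserved, not nonnegativity of $c$, though the corollary itself does not need this.
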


It is worth noting that the tangentiality constraint $~w \cdot ~n = 0$ on $\partial \mathcal{M}$ inherent in the fibers $E_c$ of the process bundle can be lifted when $\mathcal{F}$ has a well defined $L^2$ gradient, at the expense of a different boundary condition imposed naturally.  To see this, let \(E_c \subset \Gamma(T\mathcal{M})\) be an unconstrained space of vector fields and recall that
\[
\mathcal{P}_c~w = - \nabla \cdot c ~w.
\]
Then, for any chemical potential \(\mu = \mathrm{grad}_{L^2} \mathcal{F}|_c\), it follows that
\[
\langle d\mathcal{F}_c\,|\,\mathcal{P}_c~w\rangle
=
\int_M \mu \, (-\nabla \cdot c ~w)\, dV
=
\int_M c \nabla \mu \cdot ~w\, dV
-
\int_{\partial M} c \mu\, ~w \cdot ~n\, dS.
\]
Accordingly, the first variation of the Rayleighian in the direction \(\mathring{~w}\) is given by
\[
\langle\partial_{~w} \mathcal{R}_c(~w)\,|\,\mathring{~w}\rangle
=
\int_M \bigl(c \nabla \mu + \partial_{~w}\mathcal{D}_c(~w)^\sharp \bigr)\cdot \mathring{~w}\, dV
-
\int_{\partial M} c \mu\, \mathring{~w} \cdot ~n\, dS,
\]
where \(\partial_{~w} \mathcal{D}_c(~w)^\sharp \in E_c\) denotes the \(L^2\)-Riesz representative of the fiber derivative. Since $\mathring{~w}$ is now unconstrained, this reduces to the force balance equation  
\[
c \nabla \mu + \partial_{~w} \mathcal{D}_c(~w)^\sharp = 0, \qquad \text{on } \mathcal{M}
\]
together with the natural boundary condition
\[
c \mu = 0
\qquad \text{on } \partial \mathcal{M}.
\]
Thus, there are (at least) two choices of $Q,E$ which lead to well posed formulations of OVP in this case: one where $Q$ is unconstrained and the fibers of $E$ are tangential to $\partial\mathcal{M}$, and another where both spaces are essentially unconstrained and $Q$ obtains a natural boundary condition.  Moreover, if $E_c$ is defined to be a space of ``fluxes'' $~j=c~w$ and the process mapping is $\mathcal{P}_c~j = -\nabla\cdot~j$, the second choice naturally enforces $\mu = 0$ on $\partial\mathcal{M}$.

\figDiffusion

An example of this flexibility is shown in \Cref{fig:dirichlet-vs-neumann}, where the free energy is given by $\mathcal{F}(c) = \int_\mathcal{M} c (\log c - 1 + \mu_0) \, dV$ in terms of the standard chemical potential $\mu_0$ (c.f. \cite{arroyo2017onsager}) and the process mapping is chosen as $\mathcal{P}_c~j = -\nabla\cdot~j$.  It follows that $\mu = \mathrm{grad}_{L^2} \mathcal{F} = \log c + \mu_0$, so that OVP implies the natural boundary condition $c = e^{-\mu_0}$ on $\partial\mathcal{M}$ when $Q$ is as before and $E_c$ is an unconstrained space of vector fields.   While the dynamics discovery experiments in \Cref{sec:numerics} will employ the tangentiality constraint on $E_c$ instead, this alternative approach may be useful in the case that natural boundary conditions on $Q$ are desired.

\subsubsection{Sobolev Gradient Flows}\label{subsec:sobolev}


Another class of equations with a nice connection to OVP are the Sobolev gradient flows (SGFs).  Of particular interest for this work are phase-field models \cite{chen2002phase,wu2020phase}, which model the concentration of a solute $c\in Q$ and capture phase separation behavior.  For example, the Cahn-Hilliard equation is useful for modeling di-block copolymers \cite{luo2023optimal,shi2013self,noshay2013block} which are polymer chains of blocks from two distinct species. The two polymer species generally repel each other, causing the solution to separate into distinct phases, while bonding between the species limits the extent of this separation and causes interesting patterns to form.  Several examples of such models will be considered in \Cref{sec:numerics}.

To explore the connection between SGFs and OVP, recall the Ginzburg-Landau (GL) free energy: 
\begin{equation}\label{eq:ginzburg-landau-fe}
    \mathcal{F}(c) = \int_{\mathcal{M}}\frac{\alpha}{2}\left| {\nabla c} \right|^{2} + F(c)\,\,dV
\end{equation}
where $F:Q \rightarrow Q$ represents the bulk free energy density which is generally empirical.  It is common to approximate this density with a double well potential such as the so-called Flory-Huggins logarithmic potential \cite{giorgini2020weak,padhan2025cahn},
\begin{equation}\label{eq:log-potential}
    F(c) = \frac{\theta}{2}\left\lbrack (1 - c)\log(1 - c) + (1 + c)\log(1 + c) \right\rbrack - \frac{\theta_{c}}{2}c^{2},
\end{equation}
where $\theta\in\mathbb{R}_{+}$ is the absolute temperature of the system, and $\theta_{c}\in\mathbb{R}_{+}$ is the critical temperature of phase separation.  Importantly, the parameters $\theta,\theta_c$ are material-dependent and often uncertain, making their determination a good candidate for replacement with a data-driven learning procedure. A further approximation to $F$ is sometimes called the ``shallow quenching'' limit \cite{wu2021review},
\begin{equation*}
    F(c) \approx \lr{\frac{\theta}{2}-\theta_c}c^2 + \frac{\theta}{12}c^4  \approx \frac{\left( 1 - c^{2} \right)^{2}}{4}.
\end{equation*}
In either case, note that the bulk free energy density has an interval on which its second derivative $F''(c)\leq 0$ is nonpositive.  This region is called the ``spinodal regime'' where concentrations tend towards phase separation.  The developments in \Cref{sec:inverse} will show how OVP can be used to determine $F$ from data in a way which preserves unconditional energy stability of the resulting discretization.


Assuming $c\in L^2(\mathcal{M})$ is square-integrable on a bounded domain $\mathcal{M}$, the $L^2$-gradient flow of the GL free energy yields the Allen-Cahn equation,
\begin{equation}\label{eq:Allen-Cahn}
\begin{cases}
    \dot{c} = \alpha\Delta c - F'(c) & \mathrm{in}\,\,\mathcal{M}, \\
    \partial_{~n}c = 0 & \mathrm{on}\,\,\partial\mathcal{M},
\end{cases}
\end{equation}
where $\Delta c$ is understood in the usual weak sense and the boundary condition in \eqref{eq:Allen-Cahn} ensures that the $L^2$-gradient $d\mathcal{F}(c)^\sharp = -\alpha \Delta c + F'(c)$ is well defined. Allen-Cahn is an effective model for phase transition in multi-component systems but does not conserve the total species concentration $\int_{\mathcal{M}} c$.  In cases where this is important, the Cahn-Hilliard model is more appropriate,
\begin{equation}\label{eq:Cahn-Hilliard}
\begin{cases}
    \dot{c} = \nabla\cdot ~M\nabla\mu(c) & \mathrm{in}\,\,\mathcal{M}, \\
    \mu = -\alpha\Delta c + F'(c) & \mathrm{in}\,\,\mathcal{M}, \\
    \partial_{~n}c = ~M\nabla\mu\cdot~n = 0 & \mathrm{on}\,\,\partial\mathcal{M}.
\end{cases}
\end{equation}
Here, $~M:T^*\mathcal{M}\to T\mathcal{M}$ is an invertible linear mobility field mapping forces to velocities, and the boundary conditions are necessary to ensure well posedness.  When $~M=I$ and so this object is trivial, it is well known that Cahn-Hilliard is the $H^{-1}$-gradient flow of the same GL free energy $\mathcal{F}$; A computation can be found in \Cref{thm:sobolev} in the Appendix.  As it turns out, these models are equally well expressed via OVP with certain choices of functional data.


\begin{theorem}\label{thm:AC-CH}
    Let $Q=C^\infty(\mathcal{M})$ and consider the Ginzburg-Landau free energy \eqref{eq:ginzburg-landau-fe}.  The Allen-Cahn model \eqref{eq:Allen-Cahn} follows from OVP with the choices $E=TQ$, $\mathcal{P}=I$, and $\mathcal{D}_c(\dot{c})=\frac{1}{2}\int_{\mathcal{M}}\dot{c}^2\,dV$.  Similarly, the Cahn-Hilliard model \eqref{eq:Cahn-Hilliard} follows from OVP with $E_{c} =\{~w \in \Gamma(T\mathcal{M}): ~w\cdot ~n = 0\text{  on  }\partial\mathcal{M}\}$, $\mathcal{P}_c~w = -\nabla \cdot ~w$, and $\mathcal{D}_c(~w) = \frac{1}{2}\int_{\mathcal{M}}\nn{~w}_{M^{-1}}^2\,dV$.
\end{theorem}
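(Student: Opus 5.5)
Both claims follow from the balance law \eqref{eq:onsager_optimality}, $\mathcal{P}^\top_c d\mathcal{F}_c + \partial_{~w}\mathcal{D}_c(~v)=0$. Equivalently, since both dissipation potentials are quadratic, they follow from the gradient-flow form \eqref{eq:OVP-GF}, $\dot c = -\mathcal{P}\sharp\mathcal{P}^\top d\mathcal{F}_c$. So in each case I will compute three things: the $L^2$ Riesz representative of $d\mathcal{F}_c$, the flat map $\flat=\partial_{~w}\mathcal{D}_c$, and the transpose $\mathcal{P}^\top_c$. I will identify duals with the $L^2$ pairing, exactly as in the proof of \Cref{thm:wasserstein-grad-flow}.

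\emph{Step 1 (first variation of $\mathcal{F}$).} For $\mathring c\in T_cQ$, integration by parts gives
\[
\dual{d\mathcal{F}_c}{\mathring c}=\int_{\mathcal{M}}\big(-\alpha\Delta c+F'(c)\big)\mathring c\,dV+\alpha\int_{\partial\mathcal{M}}\partial_{~n}c\,\mathring c\,dS .
\]
I will argue that $\mathcal{F}$ has a genuine $L^2$ gradient $\mu=-\alpha\Delta c+F'(c)$ only if the boundary term vanishes. Since $\mathring c$ is unrestricted on $\partial\mathcal{M}$, this forces the natural condition $\partial_{~n}c=0$, as the paper already notes for \eqref{eq:Allen-Cahn}. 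I will therefore read $Q$ as smooth fields with $\partial_{~n}c=0$, or equivalently treat the Neumann condition as the natural boundary condition produced by the variational principle.

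\emph{Step 2 (Allen--Cahn).} Here $E=TQ$ and $\mathcal{P}=I$, so $\mathcal{P}^\top=I$. Also $\partial_{~w}\mathcal{D}_c(\dot c)=\dot c$ under the $L^2$ pairing, since $\mathcal{D}_c$ is half the squared $L^2$ norm. The balance law becomes $\mu+\dot c=0$, which is $\dot c=\alpha\Delta c-F'(c)$. Together with the boundary condition from Step 1, this is \eqref{eq:Allen-Cahn}. This case is really the converse direction of \Cref{thm:gradflow} with $h$ the $L^2$ metric.

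\emph{Step 3 (Cahn--Hilliard).} First, $\partial_{~w}\mathcal{D}_c(~w)=~M^{-1}~w$, so $\flat=~M^{-1}$ and $\sharp=~M$. This requires $~M$ to be symmetric positive definite, which I will assume, since otherwise $\mathcal{D}$ is not a convex quadratic form. Next, for $\mathring c$ and $~w\in E_c$,
\[
\ip{\mathring c}{-\nabla\cdot~w}=\int_{\mathcal{M}}\nabla\mathring c\cdot~w\,dV-\int_{\partial\mathcal{M}}\mathring c\,~w\cdot~n\,dS ,
\]
and the boundary integral vanishes because $~w\cdot~n=0$. Hence $\mathcal{P}^\top=\nabla$. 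The balance law gives $~v=-~M\nabla\mu$ in the interior, and then $\dot c=\mathcal{P}_c~v=\nabla\cdot~M\nabla\mu$. The boundary condition $~M\nabla\mu\cdot~n=0$ is exactly the tangentiality $~v\cdot~n=0$ built into $E_c$. The condition $\partial_{~n}c=0$ comes from Step 1, as before.

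\emph{Main obstacle.} The optimality condition only holds when tested against admissible variations in $E_c$, which are tangential vector fields. To conclude that $~v=-~M\nabla\mu$ holds pointwise, I will decompose the minimization over the closed subspace $E_c$ and take compactly supported variations $\mathring{~w}$ to get the interior equation. I will then verify that the resulting $~v$ lies in $E_c$, for which the Neumann condition on $\mu$ in the $~M$-weighted sense is needed. Alternatively, I will introduce a multiplier for $~w\cdot~n=0$, as in the constrained Rayleighian discussion, and show that it vanishes. I will also note surjectivity of $\mathcal{P}$ onto mean-zero fields, via a Neumann problem as in \Cref{subsec:wass_gfs}. With that, \Cref{thm:gradflow} applies and mass conservation $\int\dot c=0$ is automatic.
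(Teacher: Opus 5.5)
Your Steps 1--3 are essentially the paper's proof. Both use the boundary term $\alpha\int_{\partial\mathcal{M}}\partial_{\mathbf{n}}c\,\mathring{c}\,dS$ to force $\partial_{\mathbf{n}}c=0$, obtain $\mathcal{P}_c^\top=\nabla$ by killing the boundary integral with $\mathbf{w}\cdot\mathbf{n}=0$, and then use $\flat=\mathbf{M}^{-1}$ to get $\mathbf{v}=-\mathbf{M}\nabla\mu$, with the no-flux condition read off from $\mathbf{v}\in E_c$. Your explicit assumption that $\mathbf{M}$ is symmetric positive definite is a fair point the paper leaves implicit. The problems are confined to your ``main obstacle'' paragraph: the repairs proposed there would not work as stated.

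\textbf{Verifying $\mathbf{v}\in E_c$.} You cannot \emph{verify} that $\mathbf{v}=-\mathbf{M}\nabla\mu$ lies in $E_c$. The condition $\mathbf{M}\nabla\mu\cdot\mathbf{n}=0$ is a third-order boundary condition on $c$; for $\mathbf{M}=I$ and $\partial_{\mathbf{n}}c=0$ it reads $\partial_{\mathbf{n}}\Delta c=0$. Nothing earlier in your argument supplies it, so invoking it would be circular. On $E_c$ one has
\[ \mathcal{R}_c(\mathbf{w}) = \tfrac12\|\mathbf{w}+\mathbf{M}\nabla\mu\|^2_{\mathbf{M}^{-1}} - \tfrac12\|\nabla\mu\|^2_{\mathbf{M}}. \]
Tangential fields are $L^2$-dense, so $E_c$ is \emph{not} closed in the Hilbert structure where a projection argument would live. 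Consequently the infimum over $E_c$ is attained exactly when $\mathbf{M}\nabla\mu\cdot\mathbf{n}=0$. The correct logic is the paper's. \Cref{def:OVP} presupposes a minimizer $\mathbf{v}\in E_c$. Compactly supported variations then give $\mathbf{v}=-\mathbf{M}\nabla\mu$, and membership in $E_c$ \emph{produces} the boundary condition. Equivalently, that condition is the compatibility requirement under which the OVP step is defined at all.

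\textbf{The multiplier.} The multiplier route does not give a vanishing multiplier. For non-tangential $\mathbf{w}$ you have $\langle d\mathcal{F}_c\,|\,-\nabla\cdot\mathbf{w}\rangle=\int_{\mathcal{M}}\nabla\mu\cdot\mathbf{w}\,dV-\int_{\partial\mathcal{M}}\mu\,\mathbf{w}\cdot\mathbf{n}\,dS$. Stationarity of $\mathcal{R}_c(\mathbf{w})-\int_{\partial\mathcal{M}}\lambda\,\mathbf{w}\cdot\mathbf{n}\,dS$ therefore forces $\lambda=-\mu|_{\partial\mathcal{M}}$, which is generally nonzero. This is the same boundary term that yields the natural condition $\mu=0$ when tangentiality is lifted in \Cref{subsec:wass_gfs}.

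\textbf{Surjectivity.} With $Q=C^\infty(\mathcal{M})$ as stated, $\mathcal{P}_c$ reaches only mean-zero functions, so \Cref{thm:gradflow} does not literally apply. It is not needed anyway: $\int_{\mathcal{M}}\dot c\,dV=-\int_{\partial\mathcal{M}}\mathbf{v}\cdot\mathbf{n}\,dS=0$ follows directly.
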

\begin{proof}
    Consider the variational derivative of $\mathcal{F}$ in the (unconstrained) direction $\mathring{c}\in T_cQ$,  
    \[ \dual{d\mathcal{F}_c}{\mathring{c}} = \int_{\mathcal{M}}\alpha \nabla c\cdot\nabla\mathring{c} + F'(c)\mathring{c}\,\,dV = \int_{\partial\mathcal{M}}\alpha \mathring{c}\,\partial_{~n}c\,\,dS + \int_\mathcal{M}\lr{F'(c) -\alpha\Delta c}\mathring{c}\,\,dV. \]
    Applying OVP with $\mathcal{P}=I$ and $\mathcal{D}_c(w) = \frac{1}{2}\int_{\mathcal{M}}w^2\,dV$ then yields the system minimizing 
    \[ \dual{d\mathcal{F}_c + \partial_w\mathcal{D}_c(w)}{\mathring{w}} = \int_{\mathcal{M}} \big(w - (\alpha\Delta c - F'(c))\big)\mathring{w}\,dV + \int_{\partial\mathcal{M}}\alpha\mathring{w}\,\partial_{~n}c\,dS = 0, \]
    among all variations $\mathring{w} \in E_c$.  This is equivalent to the Allen-Cahn system \eqref{eq:Allen-Cahn} since $w = \dot{c}$.
    To establish Cahn-Hilliard, consider the choices made in the statement and observe that the $L^2$-transpose of the process mapping $\mathcal{P}_c~w = -\nabla\cdot ~w$ (considered as a map $\mathcal{P}_c^\top: T_cQ \to E_c$ via the respective Riesz isomorphisms) requires that, for any $\mathring{c}\in T_cQ$,
    \[ \ip{\mathcal{P}_c~w}{\mathring{c}} = \int_{\mathcal{M}}-\lr{\nabla\cdot~w}\mathring{c}\,\,dV = \int_{\mathcal{M}}~w\cdot\nabla\mathring{c}\,\,dV - \int_{\partial\mathcal{M}} \mathring{c}\,~w\cdot~n\,\,dS = \big\langle~w,\mathcal{P}_c^\top\mathring{c}\big\rangle. \]
    In particular, $\mathcal{P}_c^\top\mathring{c} = \nabla\mathring{c}$ and  $~w\cdot~n$ must be $L^2(\partial\mathcal{M})$-orthogonal to the tangent space $T_cQ$ at the boundary $\partial\mathcal{M}$, which is guaranteed by the boundary condition $~w\cdot~n = 0$ on $\partial\mathcal{M}$ inherent in the fibers $E_c$.  Moreover, defining the chemical potential $\mu(c) = -\alpha\Delta c + F'(c) = d\mathcal{F}_c^{\sharp}$ via the $L^2$-gradient of $\mathcal{F}$ already implies the boundary condition $\partial_{~n}c = 0$.  It follows that OVP implies the system $\dot{c} = \mathcal{P}_c~v$ where $~v \in E_c$ satisfies 
    \begin{align*}
        \big\langle \mathcal{P}_c^\top d\mathcal{F}_c + \partial_{~w}\mathcal{D}_c(~v)\mid\mathring{~w}\big\rangle = \int_\mathcal{M} (\nabla \mu(c) + ~M^{-1}~v)\cdot \mathring{~w}\,dV = 0,
    \end{align*}
    for any admissible variation $\mathring{~w}$.
    This directly implies that $~v = -~M\nabla\mu(c)$ and therefore OVP is the statement $\dot{c} = \nabla\cdot~M\nabla\mu(c).$ This is precisely the Cahn-Hilliard system $\eqref{eq:Cahn-Hilliard}$, where the second boundary condition comes from $~v\cdot~n = 0$ in the definition of $E_c$. 
\end{proof}

\begin{remark}
    It is similarly straightforward to express Cahn-Hilliard in terms of the process mapping $\mathcal{P}_c~w = -\nabla\cdot c~w$ on the same fibers $E_c$ (now thought of as velocities), with the usual caveats about division by $c$.  The discretization in \Cref{sec:spacedisc} will adopt this perspective. 
\end{remark}

\section{Variational Time Discretization}\label{sec:timedisc}

OVP inherits nice behavior, such as energy-stability (c.f. \Cref{thm:continuous_ES}), due to its interpretation as a minimization principle.  With this in mind, it is now appropriate to discuss the discretization of OVP in a way which retains this character, hence provides a discrete analogue of the continuous variational structure.  For simplicity, discussion will be restricted to configuration spaces $Q$ which are vector spaces, although an extension to general manifolds is likely possible with an appropriate discretization of the exponential map.  

Despite its significant impact on physical understanding, there have been relatively few variational discretizations of OVP previously proposed in the literature \cite{chen2025onsager,zhu2025stokes,liu2024variational}.  Given discrete time instances $t_0< t_1 < ... < t_K$, the goal is to construct approximations $~q^k \approx ~q(t_k) \in Q$ of the time-continuous state $~q\in C^1([0,T],Q)$ which satisfy an appropriate discrete analogue of \Cref{def:OVP}.  In every case, this involves the design and subsequent minimization of a discrete Rayleighan $\mathcal{R}^d:E\to\mathbb{R}$, although the choice of process bundle $E$ and process mapping $\mathcal{P}$ may depend on the scheme employed.  In all previous works, the configuration space $Q\cong \mathbb{R}^n$ is identified with (perhaps some subset of) Euclidean space and the tangent bundle $E=TQ\cong Q\times Q$ is chosen as the process space.  Making first-order approximations to the time derivatives of the state and free energy, respectively,
\[D_{\Delta t}\big(~q^k,~q\big) =  \frac{~q-~q^k}{\Delta t}, \qquad D_{\Delta t}\,\mathcal{F}\big(~q^k, ~q\big) = \frac{\mathcal{F}\lr{~q}-\mathcal{F}\lr{~q^k}}{\Delta t},\]
then leads to the discrete Rayleighan, 
\begin{equation}\label{eq:chen-rayleighan}
    \mathcal{R}^d\big(~q^{k},~q\big) = D_{\Delta t}\,\mathcal{F}\big(~q^k, ~q\big) + \mathcal{D}\big(~q^k, D_{\Delta t}\big(~q^k,~q\big)\big), 
\end{equation}
which can be minimized directly for the time-updated state.  In fact, OVP in this setting is the update rule $~q^{k+1} = \argmin_{~q}\mathcal{R}^d\lr{~q^k,~q}$.

\begin{remark}\label{rem:first-order}
    The choice of a first-order derivative approximation $D_{\Delta t}$ is somewhat artificial and can likely be extended to higher-order approximation with standard techniques in variational integration \cite{marsden2001discrete}.  However, this will necessarily increase the cost, as multiple minimizations will be required per time step.
\end{remark}

~Despite its elegant formulation, the discrete OVP just discussed suffers a notable drawback: it cannot natively accommodate process bundles $E$ which differ from the tangent bundle $TQ$.  To handle systems with conservation constraints, such as the Wasserstein model discussed in \cref{subsec:wass_gfs}, one must resort to Lagrange multipliers in this framework, which increase the complexity of the optimization problem and bring the discrete interpretation further from the simple minimization in \Cref{def:OVP}.  To remedy this, we propose the following alternative discretization which recovers previous work as a special case.


\begin{definition}\label{def:discrete_OVP}
Given an Onsager system $\left( Q,E,\mathcal{F},\mathcal{D},\mathcal{P} \right)$ as in \Cref{def:OVP}, for which $Q$ is a vector space, consider the time-discrete Rayleighian $\mathcal{R}^{d}:E \rightarrow \mathbb{R}$,
\begin{equation}\label{eq:disc-rayleighian}
    \mathcal{R}^{d}\left(~{q},~{w}\right) = \mathcal{F}\lr{~{q} + \Delta t\,\mathcal{P}\lr{~q}~w} + \Delta t\,\mathcal{D}\left(~{q},~{w}\right),
\end{equation}
depending implicitly on the step-size $\Delta t > 0$.\footnote{Observe that the dimensional character of $\mathcal{R}^d$ here differs from the previous case by a factor of $\Delta t$, merely for convenience.} The time-discrete trajectory $~{q}^{\bullet}:[K] \rightarrow Q$, where $[K] = \{0,1,...,K\}$ and $~q^k \approx ~q(t_0 + k\Delta t)$, is a solution to the discrete OVP provided it satisfies the following  update equation for all $0\leq k\leq K-1$:
\begin{equation}\label{eq:disc-argmin}
~{q}^{k + 1} = ~{q}^{k} + \Delta t\,\mathcal{P}
\big(~{q}^{k}\big)~v, \qquad ~v = \argmin_{~{w} \in E_{~{q}^{k}}}
\mathcal{R}^{d}\big( ~{q}^{k},~{w} \big).
\end{equation}
\end{definition} 


Note that the discrete OVP in \Cref{def:discrete_OVP} is based on the same Taylor series expansion for the derivative of free energy used before,
\begin{equation*}
    \frac{\mathcal{F}\lr{~q^{k+1}} - \mathcal{F}\lr{~q^k}}{\Delta t} = \big\langle d\mathcal{F}\big(~q^k\big)\,\big|\,\mathcal{P}\big(~q^k\big)~v\big\rangle + \mathcal{O}\lr{\Delta t}.
\end{equation*}
However, it now involves a minimization which takes place in the fibers $E_{~q^k}$ of the process bundle, which are related to changes in configuration via the process mapping $\mathcal{P}:E\to TQ$.  This exactly recovers previous work based on the minimization of \cref{eq:chen-rayleighan} when the process mapping $\mathcal{P}=\mathcal{I}$ is trivial, while retaining the interpretation of OVP as a minimization problem in the presence of nontrivial $\mathcal{P}$, yielding additional flexibility in practical use.  For example, the space $E$ can be tailored to the problem at hand as in the continuous formulation (see \cite{arroyo2017onsager} for examples), and additional constraints can be incorporated into the fibers $E_{~q^k}$ as discussed in \Cref{sec:perspectives}.  Most importantly, it is easy to show that the integrator produced by \Cref{def:discrete_OVP} is unconditionally energy-stable.

\figFrames

\begin{theorem}\label{thm:discrete_ES}
Solutions to the discrete OVP \eqref{eq:disc-argmin} dissipate the free energy $\mathcal{F}$ independent of the step-size $\Delta t>0$.
\end{theorem}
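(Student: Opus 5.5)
The argument is a comparison of the minimizer $~v$ with the zero element of the fiber, in the spirit of minimizing-movement schemes; no differentiability or convexity is needed beyond existence of the minimizer. Fix a step $k$ and write $~q^k$ for the current state. Since $~v = \argmin_{~w\in E_{~q^k}}\mathcal{R}^d(~q^k,~w)$ and the zero vector $~0\in E_{~q^k}$ is an admissible competitor, we have $\mathcal{R}^d(~q^k,~v)\leq \mathcal{R}^d(~q^k,~0)$.

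The two sides of this inequality evaluate as follows.
\begin{itemize}
\item \textbf{Right-hand side.} Fiber-wise linearity of $\mathcal{P}$ gives $\mathcal{P}(~q^k)~0 = ~0$, and the normalization $\mathcal{D}_{~q^k}(~0)=0$ from \Cref{def:OVP} kills the dissipation term. Hence $\mathcal{R}^d(~q^k,~0) = \mathcal{F}(~q^k)$.
\item \textbf{Left-hand side.} By the update rule \eqref{eq:disc-argmin}, $~q^k+\Delta t\,\mathcal{P}(~q^k)~v = ~q^{k+1}$. Hence $\mathcal{R}^d(~q^k,~v) = \mathcal{F}(~q^{k+1}) + \Delta t\,\mathcal{D}(~q^k,~v)$.
\end{itemize}
Combining the two gives the discrete dissipation inequality
\begin{equation*}
    \mathcal{F}\big(~q^{k+1}\big) - \mathcal{F}\big(~q^k\big) \leq -\Delta t\,\mathcal{D}\big(~q^k,~v\big) \leq 0,
\end{equation*}
using nonnegativity of $\mathcal{D}$ and $\Delta t>0$. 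No restriction on $\Delta t$ entered, so the decrease is unconditional. Summing over $k$ gives monotone decay along the whole discrete trajectory, together with the telescoped bound $\mathcal{F}(~q^K)+\Delta t\sum_k \mathcal{D}(~q^k,~v^k)\leq \mathcal{F}(~q^0)$. This mirrors the continuous estimate $\dot{\mathcal{F}}\leq -\mathcal{D}$ from \Cref{thm:continuous_ES}.

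The only delicate point is that the minimizer in \eqref{eq:disc-argmin} must exist. This matters because $\mathcal{R}^d$ is generally not convex in $~w$ when $\mathcal{F}$ is nonconvex, as for double-well densities. I would handle it in one of two ways.
\begin{itemize}
\item State the theorem for any minimizer, since the statement presupposes a solution of \eqref{eq:disc-argmin}; uniqueness is not needed.
\item If only an approximate minimizer is computed, observe that the same argument goes through for any $~v$ satisfying $\mathcal{R}^d(~q^k,~v)\leq\mathcal{R}^d(~q^k,~0)$.
\end{itemize}
The second observation is also what one would use to justify energy stability of an inexact optimizer in practice.
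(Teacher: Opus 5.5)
Your proof is correct and is essentially the paper's argument: compare the minimizer $~v$ with the admissible competitor $~0$, and use $\mathcal{D}(~q^k,~0)=0$ together with nonnegativity of $\mathcal{D}$ to get $\mathcal{F}(~q^{k+1})\leq\mathcal{R}^d(~q^k,~v)\leq\mathcal{R}^d(~q^k,~0)=\mathcal{F}(~q^k)$. Keeping the term $\Delta t\,\mathcal{D}(~q^k,~v)$ explicit, as you do, records the slightly sharper dissipation inequality that the paper's chain discards; your remark that any $~v$ with $\mathcal{R}^d(~q^k,~v)\leq\mathcal{R}^d(~q^k,~0)$ suffices is a valid bonus for inexact solvers.
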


\begin{proof}
Recall that the dissipation potential $\mathcal{D}$ is
fiber-wise non-negative and satisfies $\mathcal{D}( \cdot ,~0) = 0$.  Since $~v = \argmin_{~w}\mathcal{R}^d$ minimizes the discrete Rayleighan \eqref{eq:disc-rayleighian}, the free energy satisfies
\[ \mathcal{F}\big(~q^{k+1}\big) = \mathcal{F}\big(~q^k + \Delta t\,\mathcal{P}\big(~q^k\big)~v\big) \leq \mathcal{R}^{d}\big(~{q}^{k},~v\big) \leq \mathcal{R}^{d}\big(~{q}^{k},~0\big) = \mathcal{F}\big(~{q}^{k}\big), \]
for any time index $0\leq k\leq K-1$.
\end{proof}

Pseudocode corresponding to the discrete OVP \eqref{eq:disc-argmin} is given in \Cref{alg:OVP}, and example solutions to some relevant phase-field equations (\Cref{cor:wgf-examples}) are displayed in \Cref{fig:frames}. 
The spatial discretization is based on a staggered grid finite volume scheme detailed in \Cref{sec:spacedisc}.  The last column of \Cref{fig:frames} also confirms that these solutions are energy stable, since the free energy decreases monotonically over time.

\begin{algorithm}[!htb]
\caption{OVP Time Integration}\label{alg:OVP}
\begin{algorithmic}
\Require Discrete Onsager data $(Q,E,\mathcal{F},\mathcal{D},\mathcal{P})$ corresponding to \Cref{def:OVP}.  Step-size $\Delta t >0$.  Initial state $~q^0 \in Q$.
\Function{Step}{$\Delta t, \mathcal{F},\mathcal{D},\mathcal{P}, ~q^0$}
\State Set $~q \gets ~q^0$.
\State Form  $\mathcal{R}^{d}(~{q},~{w}) = \mathcal{F}(~{q} + \Delta t\,\mathcal{P}(~q)~w) + \Delta t\,\mathcal{D}(~{q},~{w})$ as in \cref{eq:disc-rayleighian}.
\State Compute $~v = \argmin_{~w} \mathcal{R}^{d}(~{q},~{w})$.   \Comment{We use L-BFGS.}
\State \Return $~q + \Delta t\,\mathcal{P}(~q)~v$.
\EndFunction
\end{algorithmic}
\end{algorithm}



\begin{remark}
    Note that the discrete Rayleighan $\mathcal{R}^d$ is no longer guaranteed to be convex in $~w$ due to nonconvexity in the free energy $\mathcal{F}$.  On the other hand, its Hessian contains an $\mathcal{O}(\Delta t)$ contribution from the Hessian of the convex dissipation potential $\mathcal{D}$ that dominates the $\mathcal{O}(\Delta t^2)$ contribution from the Hessian of $\mathcal{F}$ when $\Delta t$ is sufficiently small.  Therefore, shrinking the step-size can always restore convexity for regular enough $\mathcal{F}$. 
\end{remark}

It is worth mentioning how the update equation \cref{eq:disc-argmin} can expressed without the need for an explicit optimization, at least for quadratic dissipation potentials $\mathcal{D}$.  Consider the derivative of the discrete Rayleighan $\mathcal{R}^d$ from \eqref{eq:disc-argmin} in the direction $\mathring{~w}$, 
\[ \big\langle \partial_{~w}\mathcal{R}\big(~q^k,~w\big)\,\big|\,\mathring{~w} \big\rangle = \Delta t\,\big\langle d\mathcal{F}\big(~q^k+\Delta t\,\mathcal{P}\big(~q^k\big)~w\big)\,\big|\,\mathcal{P}\big(~q^k\big)\mathring{~w}\big\rangle + \Delta t\,\big\langle \partial_{~w}\mathcal{D}\big(~q^k,~w\big)\,\big|\,\mathring{~w}\big\rangle. \]
Setting this to zero yields an optimality condition for the minimization of $\mathcal{R}^d$,
\[ \mathcal{P}\big(~q^k\big)^\top d\mathcal{F}\big(~q^{k+1}\big) + \flat\big(~q^k\big)~v = \mathcal{P}\big(~q^k\big)^\top d\mathcal{F}\big(~q^k+\Delta t\,\mathcal{P}\big(~q^k\big)~v\big) + \partial_{~v}\mathcal{D}\big(~q^k,~v\big) =  0, \]
where the OVP update equation $~q^{k+1} = ~q^k+\Delta t\,\mathcal{P}\big(~q^k\big)~v$ was used along with the flat operator $\partial_{~v}\mathcal{D} = \flat$ associated to the quadratic dissipation potential. It follows that the minimizer is explicitly given by $~v = -\sharp\big(~q^k\big)\mathcal{P}\big(~q^k\big)^\top d\mathcal{F}\big(~q^{k+1}\big)$, and therefore the OVP update equation can be alternatively expressed as 
\begin{equation}\label{eq:disc-OVP-no-min}
        ~q^{k+1} = ~q^k - \Delta t\big(\mathcal{P}\sharp\mathcal{P}^\top\big)\big(~q^k\big)\, d\mathcal{F}\big(~q^{k+1}\big).
\end{equation}
In particular, the metric term $\mathcal{P}\sharp\mathcal{P}^\intercal$ coming from the phenomenological kinetic coefficients is evaluated at the old state  $~q^k$, while the derivative of the free energy is evaluated at the updated state $~q^{k+1}$.  This ``IMEX'' type method is distinctly different from a forward or backward Euler scheme, where both terms are evaluated at the same state.  However, if the coefficients $\mathcal{P}\sharp\mathcal{P}^\intercal$ happen to be state-independent, this discussion has proved the following result relating the discrete OVP \eqref{eq:disc-argmin} to the backward Euler method.

\begin{theorem}\label{thm:OVP-vs-BE}
    Suppose $\mathcal{D}$ is fiber-wise quadratic.  The discrete OVP \eqref{eq:disc-argmin} recovers backward Euler time integration applied to the gradient flow \eqref{eq:OVP-GF} when $\mathcal{P}\sharp\mathcal{P}^\top$ is independent of the state $~q$.
\end{theorem}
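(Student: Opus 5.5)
The plan is to derive the first-order optimality condition of the discrete Rayleighan \eqref{eq:disc-rayleighian} at the minimizer $~v$, solve it for $~v$ using invertibility of the flat operator, and substitute into the update \eqref{eq:disc-argmin}. Under state independence of $\mathcal{P}\sharp\mathcal{P}^\top$, the result will read as the implicit (backward) Euler step for \eqref{eq:OVP-GF}.

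\textbf{Step 1 (optimality).} Fix $~q^k$ and differentiate $~w\mapsto\mathcal{R}^d(~q^k,~w)$ in a direction $\mathring{~w}\in E_{~q^k}$. Since $Q$ is a vector space and $\mathcal{P}(~q^k)$ is linear on the fiber, the chain rule gives
\[ \Delta t\,\big\langle \mathcal{P}(~q^k)^\top d\mathcal{F}\big(~q^k+\Delta t\,\mathcal{P}(~q^k)~w\big) + \partial_{~w}\mathcal{D}(~q^k,~w)\,\big|\,\mathring{~w}\big\rangle. \]
At the minimizer $~v$ this vanishes for all $\mathring{~w}$. Because $\mathcal{D}$ is fiber-wise quadratic, $\partial_{~w}\mathcal{D}(~q^k,\cdot)=\flat(~q^k)$ is linear, symmetric and positive definite, hence invertible with inverse $\sharp(~q^k)$. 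Setting $~q^{k+1}=~q^k+\Delta t\,\mathcal{P}(~q^k)~v$, we obtain $~v=-\sharp(~q^k)\mathcal{P}(~q^k)^\top d\mathcal{F}(~q^{k+1})$. Applying $\mathcal{P}(~q^k)$ then yields \eqref{eq:disc-OVP-no-min}.

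\textbf{Step 2 (state independence).} If $\mathcal{P}\sharp\mathcal{P}^\top\eqqcolon L$ does not depend on $~q$, then $L(~q^k)=L(~q^{k+1})$. Hence \eqref{eq:disc-OVP-no-min} becomes
\[ \frac{~q^{k+1}-~q^k}{\Delta t}=-\big(\mathcal{P}\sharp\mathcal{P}^\top d\mathcal{F}\big)\big|_{~q^{k+1}}, \]
which is exactly the backward Euler discretization of $\dot{~q}=-\mathcal{P}\sharp\mathcal{P}^\top d\mathcal{F}|_{~q}$. One caveat remains: since \Cref{def:OVP} assumes $\mathcal{P}$ surjective, I expect the identification with the gradient flow to follow from \Cref{thm:gradflow}. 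In fact, only the form \eqref{eq:OVP-GF} is needed here, and that form does not use surjectivity.

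\textbf{Main obstacle.} The delicate point is the direction of equivalence. Every discrete OVP solution satisfies the backward Euler equation, but when $\mathcal{F}$ is nonconvex the backward Euler equation may have several solutions, and only the global minimizer of $\mathcal{R}^d$ is selected. I will handle the converse as follows. Suppose $~q^{k+1}$ solves backward Euler. Define $~v=-\sharp\mathcal{P}^\top d\mathcal{F}(~q^{k+1})$; then $~q^{k+1}=~q^k+\Delta t\,\mathcal{P}~v$ and $~v$ is a critical point of $\mathcal{R}^d$. So backward Euler corresponds to the critical points of the discrete Rayleighan, and the discrete OVP picks the minimizing one. I will state the result in the sense of \Cref{thm:OVP-vs-BE}: the OVP update satisfies the backward Euler equation. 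I will note that the converse holds whenever $\mathcal{R}^d$ is convex in $~w$, in particular for small $\Delta t$ by the preceding remark. Boundary terms and admissibility of variations in $E_{~q^k}$, for instance under tangential constraints, enter only through the definition of $\mathcal{P}^\top$ on the chosen fibers, as in \Cref{thm:AC-CH}, and need no further treatment.
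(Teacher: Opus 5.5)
Your proposal is correct and follows essentially the same route as the paper: you take the first-order optimality condition of $\mathcal{R}^d$, invert the flat operator of the quadratic dissipation to get $~v=-\sharp(~q^k)\mathcal{P}(~q^k)^\top d\mathcal{F}(~q^{k+1})$, which gives the IMEX form \eqref{eq:disc-OVP-no-min}, and then use state independence of $\mathcal{P}\sharp\mathcal{P}^\top$ to read it as backward Euler. Your added point is a valid refinement the paper does not make explicit: backward Euler solutions correspond only to critical points of $\mathcal{R}^d$, so the identification runs one way unless $\mathcal{R}^d$ is convex in $~w$.
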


\subsection{Relationship to Other Integration Schemes}\label{subsec:others}

Before discussing how the discrete OVP \Cref{def:discrete_OVP} presented here can be used inside a machine learning workflow to enable dynamics discovery and surrogate modeling, it is worth drawing explicit connections to some related methods in the literature.  First, observe that invertibility of the process mapping $\mathcal{P}$ is sufficient for posing \eqref{eq:disc-argmin} as a minimization over the state $~q \in Q$, as discussed before, since this is equivalent to the existence of a bundle isomorphism $E\cong TQ$.  This is particularly true when the process mapping $\mathcal{P}=I$ is trivial, in which case the discrete Rayleighan $\mathcal{R}^d:Q\times Q \to \mathbb{R}$ reduces to the expression 
\[ \mathcal{R}^d\big(~q^k,~q\big) = \mathcal{F}(~q) + \Delta t\, \mathcal{D}\big(~q^k, D_{\Delta t}\big(~q^k,~q\big)\big). \]
This recovers \eqref{eq:chen-rayleighan} after a constant shift and appropriate rescaling, and is precisely the object considered in \cite{chen2025onsager} with associated discrete OVP  $~q^{k+1} = \argmin_{~q\in Q}\mathcal{R}^d\big(~q^k,~q\big)$.  Moreover, choosing the simplest quadratic dissipation potential $\mathcal{D}(~q,~w) = (1/2)\nn{~w}_{M}^2$ for some norm $\nn{\cdot}_M$ yields Di Giorgi's well known minimizing movement scheme \cite{giorgi1992movimenti,de1993new,laux2020thresholding,lu2026structure}, 
\begin{equation}\label{eq:minimizing-movement}
    ~q^{k+1} = \argmin_{~q\in Q}\mathcal{R}^d\big(~q^k,~q\big), \qquad \mathcal{R}^d\big(~q^k,~q\big) = \mathcal{F}(~q) + \frac{1}{2\Delta t}\big|~q-~q^k\big|^2_{M},
\end{equation}
a popular method for modeling evolutionary processes in low-regularity settings.  Note that \eqref{eq:minimizing-movement} is also known as the proximal point method when $\mathcal{F}$ is convex \cite{rockafellar1976monotone,cai2022developments,tran2025nonlinear}, in which case $\mathrm{prox}_{\Delta t\,\mathcal{F}}\big(~q^k\big) \coloneqq \argmin_{~q}\mathcal{R}^d\big(~q^k,~q\big)$ is called the proximal operator and OVP becomes the fixed point iteration $~q^{k+1} = \mathrm{prox}_{\Delta t\,\mathcal{F}}\big(~q^k\big)$.

Another interesting similarity arises in the context of optimal transport, where  \Cref{def:discrete_OVP} also includes the venerable Jordan-Kinderlehrer-Otto (JKO) scheme \cite{jordan1998variational} for simulating WGFs: considering a probability density $c\in Q$, 
\[ c^{k+1} = \argmin_{c\in Q}\mathcal{R}^d\big(c^k,c\big), \qquad \mathcal{R}^d\big(c^k,c\big) = \mathcal{F}(c) + \frac{1}{2\Delta t}W_2\big(c^k,c\big)^2. \]
This is not surprising given the connection to Di Giorgi, since JKO is a minimizing movement scheme with norm $\nn{\cdot}_M$ equal to the 2-Wasserstein distance \cite{villani2021topics},
\[ W_{2}\left( {c}_{0},{c}_{1} \right)^{2} = \inf\,\left\{ \int_{0}^{1}\left\| ~{w}(t) \right\|_{L^{2}\left( {c}(t) \right)}^{2}\,dt : \dot{{c}} + \nabla\cdot c~w = 0 \right\}. \]
This so-called Benamou-Brenier formula interprets the 2-Wasserstein distance as an infimum over time-dependent vector fields $~w$ that transport the probability density $c_0$ to $c_1$ under the continuity equation.  

\section{Dynamics Discovery with OVP}\label{sec:inverse}

A key advantage of OVP-based discretizations such as \Cref{def:discrete_OVP} is their guarantee of energy-stable dynamics independent of the governing potentials $\mathcal{F},\mathcal{D}$.  In the context of dynamics discovery, this enables physics-constrained predictions through unconstrained inference.  Remarkably, the difficult problem of learning structure-preserving dynamics consistent with a given set of data is lifted (through OVP) to the data-driven discovery of potentials whose dynamics agree with the collected observations.  This allows for a powerful hybrid modeling approach: the free energy $\mathcal{F}$ and dissipation potential $\mathcal{D}$ can include a mix of fixed and learnable terms, which are automatically processed by \Cref{def:discrete_OVP} into a structure-preserving dynamical evolution which couples the physical mechanisms encoded by these potentials. Additionally, the variational character of the system ensures that properties such as energy-stability hold independently of the data available or the quality of the training process.

To describe the benefits of this perspective more precisely, recall that practical systems of interest often include unknown or uncertain terms in their free energy $\mathcal{F}$ representing constitutive relationships or empirical fits to experimental data. For example, the shallow and deep quenching limits in the Cahn-Hilliard equation \cite{wu2021review} arise after simplification of the already approximate logarithmic potential \eqref{eq:log-potential}, based on the qualitative behavior of phase separation.  Even boundary conditions may be uncertain for some mixtures, making it especially difficult to come up with accurate models which reflect the underlying dissipative physics.  The proposed discrete OVP \Cref{def:discrete_OVP} offers a principled way to mitigate these issues: by including machine-learnable components in the free energy $\mathcal{F}$, salient features of the data under consideration can be preserved in a way which is compatible with the underlying thermodynamic structure, leading to a surrogate model with rigorous guarantees on its predictive behavior.

The strategy to accomplish this with the OVP \Cref{def:discrete_OVP} is to design parametric model classes $C_\mathcal{F} = \{\mathcal{F}_{\nu}: Q\to\mathbb{R}\,|\, \nu\in\Theta\}$ and $C_{\mathcal{D}} = \{\mathcal{D}_{\eta}: E\to\mathbb{R} \,|\, \eta\in\Theta\}$, which include learnable parameters $(\nu,\eta) \coloneqq \theta \in \Theta\times\Theta$ lying in some convex set $\Theta\subset\mathbb{R}^p$ and respect the structural constraints in \Cref{def:OVP}.  Along with an appropriate process mapping $\mathcal{P}:E\to TQ$ encoding problem structure, this defines a parametric family of discrete Rayleighans $\mathcal{R}^d_{\theta}:E\to\mathbb{R}$ as in \eqref{eq:disc-rayleighian}, which can be trained implicitly from observable data and are guaranteed to produce discrete energy-stable trajectories $~{q}^{\bullet}:[K] \rightarrow Q$ satisfying $\mathcal{F}_{\theta}(~q^{k+1}) \leq \mathcal{F}_{\theta}(~q^k)$ for all $k\in [K]$.  Importantly, there is no need to directly supervise the functionals $\mathcal{F}_{\nu},\mathcal{D}_{\eta},$ or $\mathcal{R}^d_{\theta}$ during training, as parameter gradients will flow backward from quantity-of-interest-based output metrics through OVP to update $\mathcal{R}_{\theta}$ in the desired way.  For simplicity, the experiments in \Cref{sec:numerics} consider the most straightforward case, where supervision is done using simulated trajectory data.  However, the same procedure is applicable in much more generality, providing a rich avenue for future work.  For example, realistic settings may target observable quantities (e.g., material properties or spectral content) derived from trajectory information.

\begin{remark}
    Note that the process mapping $\mathcal{P}$ could also be parameterized and included as part of the learning problem.  On the other hand, $\mathcal{P}$ is not usually uncertain: for all experiments in this work, it is either trivial or enforces the continuity equation.  
\end{remark}


\subsection{Related Work}
While the present work is the first discretely structure-preserving dynamics discovery method rooted in OVP, it is not the first OVP-based framework for dynamical inference.  The OnsagerNet method \cite{yu2021onsager} defines trainable neural networks based on a ``generalized Onsager's principle'', guaranteeing structure-preservation at the time-continuous (but not time-discrete) level.  Variational Onsager neural networks (VONNs) \cite{huang2022variational} were introduced as another way to satisfy OVP at the continuous level with machine learnable potentials.  After defining differentiable models for the free energy and dissipation potential, VONNs use collocation data to minimize the residual of the OVP optimality conditions (analogous to \cref{eq:onsager_optimality}) during training, so that finite difference time discretization can be applied to the learned network potentials at inference time to generate predictions.  Another work which is perhaps the most similar to our approach is \cite{lu2026structure}.  Here, the authors aim to learn dissipation potentials within di Giorgi's minimizing movement scheme, so that unconditional energy stability is preserved.  However, this continuous-time formulation is not capable of guaranteeing discrete energy stability, and does not consider the preservation of associated invariants such as mass.  Alternative strategies include the deep Onsager-Matchlup method (DOMM) \cite{li2023deep}, which combines the deep Ritz method \cite{weinan2018deep} for unsupervised training with an Onsager-Matchlup variational principle to produce a space-time solution network which is physics-informed. Further, unsupervised operator learning based on OVP was considered in \cite{chang2025unsupervised}, where direct minimization of a continuous Rayleighan was used to generate a flux operator whose associated conservation law was integrated with forward Euler to produce dynamics.  

Considering the approaches mentioned in \Cref{subsec:others} which are similar to OVP, the literature is even more numerous.  A regularization technique for module-wise neural network training, based on minimizing movement approaches, was introduced in \cite{karkar2023module} and shown to improve the performance of greedy network training procedures.  A minimizing movement scheme for the Willmore flow of surfaces was introduced in \cite{rumpf2025hybrid}, which uses a learnable notion of mean curvature to improve simulation speed.  Several more works consider learnable variants of the JKO scheme for sampling: the concurrent efforts \cite{mokrov2021large,alvarez2021optimizing,bunne2022proximal} apply input convex neural networks \cite{amos2017input} to the learnable free energy in JKO to maintain structural rigidity for high-dimensional inference, 
\cite{xu2023normalizing} provides a simplified normalizing flow strategy based on following discrete Wasserstein gradient trajectories with comparable performance to standard normalizing flows \cite{rezende2015variational}, and \cite{hertrich2024importance} combines local Wasserstein gradient steps with nonlocal rejection-resampling to mitigate slow convergence for multimodal distributions.  It is remarkable that OVP provides a unifying perspective of these methods which have quite different motivations and applications.  


\begin{algorithm}[!htb]
\caption{Dynamics Discovery with OVP}
\label{alg:learning}
\begin{algorithmic}[1]
\Require Discrete Onsager data \((Q, E, \mathcal{F}_\theta, \mathcal{D}_\theta, P)\) with trainable potentials \(\mathcal{F}_\theta\) and/or \(\mathcal{D}_\theta\), integrator step-size \(\Delta t > 0\), one-step training pairs \(\{(~q_n, ~q_n^+)\}_{n=1}^N\), observable \(O : Q \to \mathbb{R}^m\), number of training epochs \(N_e > 0\), batch size \(B > 0\), gradient descent step-size \(h > 0\).
\State Form the Rayleighan \(\mathcal{R}_\theta^d(~q,~w) = \mathcal{F}_\theta(~q + \Delta t\,\mathcal{P}(~q)~w) + \Delta t\,\mathcal{D}_\theta(~q,~w)\) as in \eqref{eq:disc-argmin}.
\For{epoch \(e = 1,\dots,N_e\)}
    \State Partition \([N]\) uniformly into \(N_b = \lfloor N/B \rfloor\) mini-batches \(B_1,\dots,B_{N_b}\).
    \For{batch \(b = 1,\dots,N_b\)}
        \State Loss \(\mathcal{L} \gets 0\).
        \For{data index \(n \in B_b\)}
            \State Compute evolved state \(~y_n = \mathrm{Step}(\mathcal{R}_\theta^d)(~q_n)\) with \Cref{alg:OVP}.
            \State Augment loss \(\mathcal{L} \mathrel{+}= | O(~y_n) - O(~q_n^+) |^2\).
        \EndFor
        \State Normalize \(\mathcal{L} \gets \mathcal{L}/|B_b|\) by batch size.
        \State Compute the gradient \(\nabla_\theta\mathcal{L}\) with automatic differentiation and \eqref{eq:no_backprop}. \Comment{Using the converged minimizer $~v$.}
        \State Update parameters \(\theta \gets \mathrm{Adam}(\nabla_\theta \mathcal{L}, h)\).
    \EndFor
\EndFor
\Ensure Trained potentials \(\mathcal{F}_\theta\) and \(\mathcal{D}_\theta\).
\end{algorithmic}
\end{algorithm}

\subsection{Training Procedure}
To illustrate how \Cref{alg:OVP} is used inside a framework for dynamics discovery, consider a dataset $D = \cup_{i=1}^{I} \{(~q_i^k,O(~q_i^k))\}_{k=0}^{K}$ consisting of $I$ trajectories of length $K$ along with an observable $O: Q \to \mathbb{R}^m$ measured along them.\footnote{We consider here only the case where $O$ is measured instantaneously.  The case where $O$ measures across larger numbers of time steps is feasible but left for future work.}  The goal is to learn a discrete Rayleighan functional $\mathcal{R}^d_{\theta}$ which generates dynamics compatible with the observations in $D$.  To that end, we minimize an empirical approximation to the following loss function, 
\begin{align}\label{eq:loss}
    \mathcal{L}(D, \theta) = \mathbb{E}_{i\sim \mathcal{U}([I])}\,\mathbb{E}_{k\sim\mathcal{U}([K-s])}\,\mathbb{E}_{s\sim\mathcal{U}([S])} \sum_{t=0}^{s}\big|O\big(~q_{i}^{k+t}\big) - O\circ \text{ Step}(\mathcal{R}^d_{\theta})^{\circ t}\big(~q_{i}^{k} \big)\big|^2,
\end{align}
where $\mathcal{U}$ denotes the uniform distribution, $S>0$ is an integer denoting the maximum consecutive integration steps, and $\mathrm{Step}(\mathcal{R}^d_{\theta})^{\circ t}$ denotes $t$ successive applications of OVP \Cref{alg:OVP} with Rayleighan $\mathcal{R}^d_{\theta}$.  The resulting minimizer $(\nu^*, \eta^*) = \theta^* = \argmin_{\theta} L(D,\theta)$ then parameterizes potential functions $\mathcal{F}_{\nu^*}$ and $\mathcal{D}_{\eta^*}$ whose OVP dynamics accurately describe the training data and generalize out-of-distribution in a physically meaningful way.  \Cref{alg:learning} provides a description of the training process employed in this work, where $S=1$ and $\mathcal{L}$ is approximated by the mean squared error over a ``flattened'' dataset $D = \{(~q_n,~q_n^+)\}_{n=1}^{N}$ of compatible pairs drawn from the set of all discrete trajectories.  Here, $~q_n = ~q_i^k$ denotes a snapshot drawn from the $i^{\rm th}$ trajectory at the $k^{\rm th}$ time step (for some $1\leq i\leq I$ and $1\leq k\leq K-1$), while $~q_n^+ = ~q_i^{k+1}$ denotes the snapshot along the same trajectory at the next time step.  For convenience, learnable functionals are denoted as functions of $\theta$ (i.e., by $\mathcal{F}_{\theta}$ and $\mathcal{D}_{\theta}$) in the remainder of the work.

\begin{remark}
    When the system state $~q$ is not directly observable, the dynamics discovery \Cref{alg:learning} can be applied by incorporating arbitrary-length rollouts from a known (or otherwise fixed) initial condition $~q_0$ instead of single-step rollouts from intermediate data.
\end{remark}

Observe that minimizing the objective $\mathcal{L}$ in \cref{eq:loss} with a gradient-based optimization algorithm requires differentiating through the variational integrator step $~q^{k+1} = \mathrm{Step}(\mathcal{R}^d_{\theta})\big(~q^k\big)$ computed with \Cref{alg:OVP}.  
In particular, this involves propagating parameter gradients through the minimization $~v = \argmin_{~w\in E_{~q}}\mathcal{R}^d_{\theta}(~q,~w)$ defining the update in the process bundle.  While this can certainly be done by differentiating directly through each iteration of the nonlinear solver used to compute $~v$, this option is expensive and prone to the accumulation of errors.  Instead, we adopt an alternative and more efficient approach, outlined in \cite{blondel2022efficient}, which applies implicit differentiation to the first-order optimality conditions of this minimization.
To see how this works, recall that invertibility of the Hessian $\partial^2_{~w}\mathcal{R}^d_{\theta}(~v)$ at the minimizer guarantees its implicit expression 
\[\partial_{~w}\mathcal{R}^d_{\theta}(~q,~v(\theta)) = 0,\]
after invoking the usual implicit function theorem.  It follows that  the parameter gradient $\partial_{\theta}~v$ can be computed by differentiating this expression in $\theta$,
\begin{align}\label{eq:no_backprop}
\partial_{\theta}~v = -\big(\partial_{~w}^2\mathcal{R}^d_{\theta}\big)^{-1}\partial_{\theta}\partial_{~w}\mathcal{R}^d_{\theta},
\end{align}
where 
the Hessian $\partial_{~w}^2\mathcal{R}^d_{\theta}$ can be inverted with a standard algorithm such as the conjugate gradient method. The experiments in \Cref{sec:numerics} utilize the implementation of \cref{eq:no_backprop} from the TorchOpt Python package \cite{ren2023torchopt} with an additional Tikhonov regularization of $\mathcal{O}(10^{-8})$, and gradient descent with an Adam update rule is used for the nonconvex ``outer'' optimization over parameters.

\section{Spatial Discretization}\label{sec:spacedisc}
To showcase the results of the proposed OVP-based \Cref{alg:learning} for learning dissipative dynamics, consider 2-D finite differences on a regular marker-and-cell (MAC) grid \cite{harlow1965numerical} (also called an Arakawa C-grid \cite{arakawa1977computational}) of $n_x\times n_y$ cells.\footnote{We emphasize that this choice is for example purposes only.  The OVP-based technology developed here is compatible with any desired spatial discretization.} 

As depicted in \Cref{fig:mac_grid}, the scalar field $c\in Q$ is stored as an array of values at cell centers $C\in \mathbb{R}^{n_x\times n_y}$, while the vector field $~w\in E_c$ is stored as a tuple of arrays $~W = (W^x,W^y)$ with $W^x\in\mathbb{R}^{(n_x+1)\times n_y}$ and $W^y\in\mathbb{R}^{n_x\times(n_y+1)}$. Once these choices are made, only discretizations of the functional data $\mathcal{F},\mathcal{D}$ and $\mathcal{P}$ need be provided in order to simulate PDEs governed by OVP (c.f. \Cref{alg:OVP}), since all other discrete differential operators will emerge automatically through the minimization process.  This is a key advantage of the OVP-based machinery presented here, which simplifies the problem of forward simulation provided the resulting optimization problem can be efficiently solved.

\begin{wrapfigure}{r}{0.30\textwidth}
    \centering
    \includegraphics[width=0.25\textwidth]{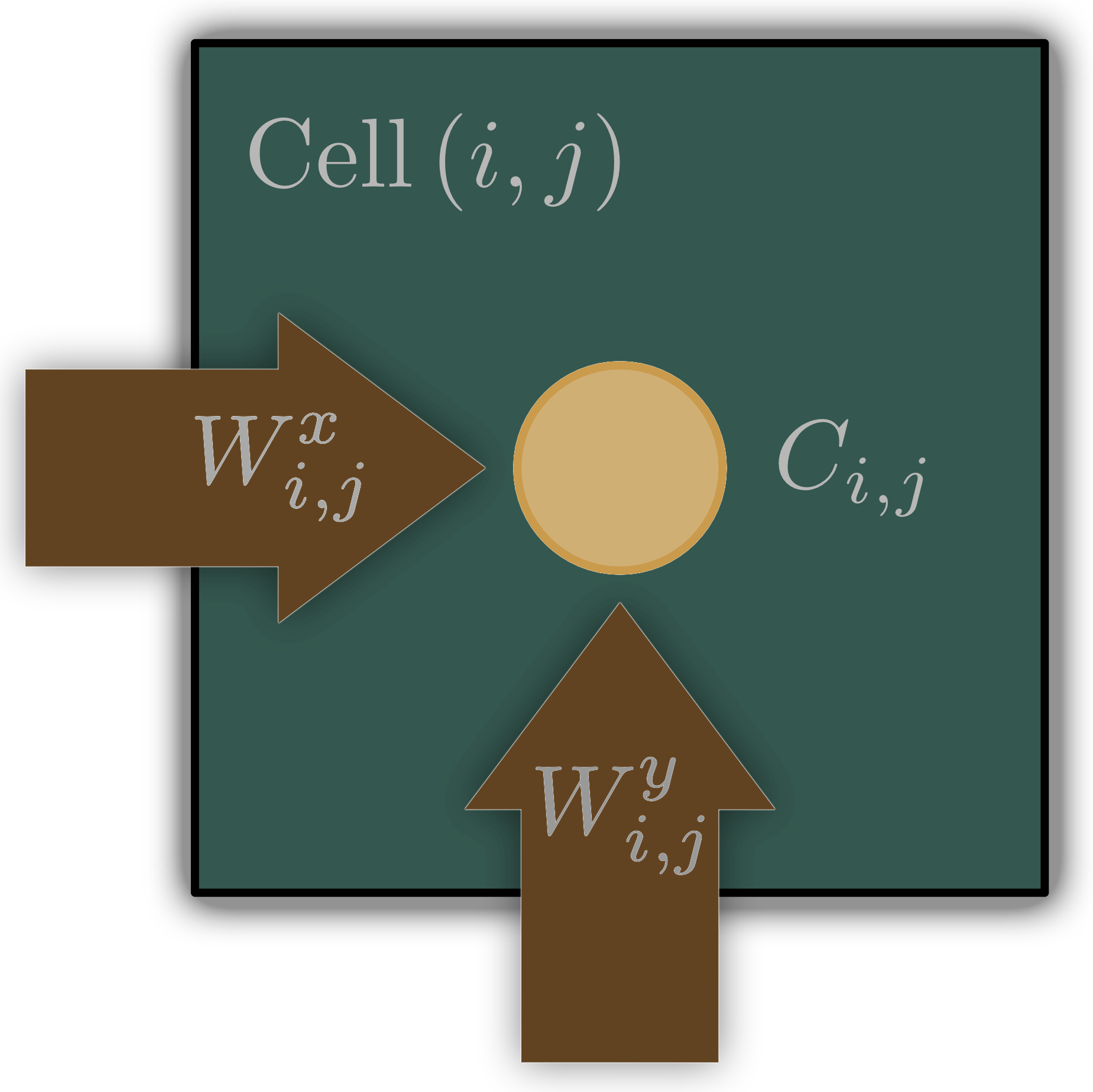}
    \caption{One cell of the MAC grid used for spatial discretization.}
    \label{fig:mac_grid}
\end{wrapfigure}

The PDEs implemented for illustration  follow the Wasserstein and Sobolev gradient flow examples in \Cref{cor:wgf-examples,thm:AC-CH}.
Two discrete process mappings will be considered:  the identity mapping $\mathcal{P}_I(C)W = W$ in the case of Allen-Cahn (where $W \in \mathbb{R}^{n_x\times n_y}$ is cell-centered), and the discrete divergence 
\begin{align*}
    [\mathcal{P}(C)~W]_{i,j} = -\lr{\frac{J^x_{i+1,j}-J^x_{i,j}}{\Delta x} + \frac{J^y_{i,j+1}-J^y_{i,j}}{\Delta y}},
\end{align*}
representing the continuous mapping $\mathcal{P}_{c}~w = -\nabla\cdot c~w$
in terms of the ``discrete fluxes'' at interior edges, 
\begin{align*}
    J^x_{i+1,j} &= \frac{1}{2}\lr{C_{i,j}+C_{i+1,j}}W^x_{i+1,j}, \qquad 1\leq i \leq n_x-1,\\
    J^y_{i,j+1} &= \frac{1}{2}\lr{C_{i,j}+C_{i,j+1}}W^y_{i,j+1}, \qquad 1\leq j\leq n_y-1.
\end{align*}
Boundary values for $J^x,J^y$ are then assigned based on the desired OVP boundary conditions.  Typically, the no-flux condition $~w\cdot~n=0$ on $\partial\mathcal{M}$ is enforced via the assignment $J_{1,j}^x=J_{n_x+1,j}^x=J_{i,1}^y=J^y_{i,n_y+1}=0$.

The central-difference gradient of a grid function, stored at interior edges, is denoted by $\nabla C =(\partial_xC,\partial_yC)\in\mathbb{R}^{(n_x-1)\times n_y}\times\mathbb{R}^{n_x\times (n_y-1)}$ with components 
\begin{align*}
     (\partial_xC_{i+1,j}, \partial_yC_{i,j+1}) = \lr{\frac{C_{i+1,j}-C_{i,j}}{\Delta x}, \frac{C_{i,j+1}-C_{i,j}}{\Delta y}}.
\end{align*}
This is used to define the Dirichlet energy 
\[\mathcal{E}_D(C) = \frac{1}{2}\left[\sum_{i=1}^{n_x-1}\sum_{j=1}^{n_y} (\partial_xC_{i+1,j})^2 + \sum_{i=1}^{n_x}\sum_{j=1}^{n_y-1}(\partial_yC_{i,j+1})^2\right]\Delta x\Delta y.\]
To compute dissipation measures, it is convenient to have an averaged representation of the vector field $~W$ (and also $\nabla C$) at cell centers.  To that end, consider the quantities 
\begin{align*}
    ~W_{i,j} &= \lr{\frac{W^x_{i,j}+W^x_{i+1,j}}{2}, \frac{W^y_{i,j}+W^y_{i,j+1}}{2}}, \qquad |~W_{i,j}|^2_M = \big\langle ~W_{i,j},~M_{i,j}~W_{i,j} \big\rangle,
\end{align*}
where $~M\in\mathbb{R}^{n_x\times n_y\times 2\times 2}$ is a discrete mobility field stored at cell centers\footnote{The experiments in \Cref{sec:numerics} use $~M_{i,j}=I$, but the discretization holds more generally}.
These choices allow definition of the discrete free energies under consideration, 
\begin{equation}\label{eq:discrete_free_energies}
\begin{split}
    \mathcal{F}_{h,{\rm FP}}(C) &= \sum_{i,j} \lr{ C_{i,j}\log C_{i,j} + C_{i,j}U_{i,j} }\Delta x \Delta y, \\
    \mathcal{F}_{h,{\rm GL}}(C) &=  \mathcal{E}_D(C) + \sum_{i,j} F(C_{i,j})\Delta x\Delta y, \\ 
    \mathcal{F}_{h,{\rm PM}}(C) &= \sum_{i,j}C_{i,j}^m\,\Delta x\Delta y,
\end{split}
\end{equation}
along with discrete dissipation potentials corresponding to these free energies,
\begin{equation}\label{eq:discrete_dissipation_potentials}
\begin{split}
    \mathcal{D}_{h,{\rm FP}}(C,~W) &= \frac{1}{2}\sum_{i,j} C_{i,j}\big|~W_{i,j}\big|^2\,\Delta x\Delta y, \\
    \mathcal{D}_{h,{\rm AC}}(C,~W) &= \frac{1}{2}\sum_{i,j} \big|~W_{i,j}\big|^2\,\Delta x\Delta y, \\
    \mathcal{D}_{h,{\rm CH}}(C,~W) &= \frac{1}{2}\sum_{i,j}C_{i,j}^2 \big|~W_{i,j}\big|^2_{M^{-1}}\,\Delta x\Delta y.
\end{split}
\end{equation}
Importantly, observe that the choices of discrete gradient $\nabla C$ and process mapping $\mathcal{P}(C)~W$ are discrete adjoints.  To see this, let $A\in\mathbb{R}^{n_x\times n_y}$ be a grid function and consider the summation-by-parts relationships
\begin{align*}
    -\sum_{i=1}^{n_x} A_{ij}(J^x_{i+1,j}-J^x_{i,j}) &= A_{1,j}J^{x}_{1,j} - A_{n_x,j}J^x_{n_x+1,j} + \sum_{i=2}^{n_x} (A_{i,j}-A_{i-1,j})J^x_{i,j} = \sum_{i=2}^{n_x} \partial_xA_{i,j}J^x_{i,j}\Delta x, \\
    -\sum_{j=1}^{n_y} A_{ij}(J^y_{i,j+1}-J^y_{i,j}) &= A_{i,1}J^{y}_{i,1} - A_{i,n_y}J^y_{i,n_y+1} + \sum_{i=2}^{n_y} (A_{i,j}-A_{i,j-1})J^y_{i,j} = \sum_{i=2}^{n_y} \partial_yA_{i,j}J^y_{i,j}\Delta y,
\end{align*}
where the isolated boundary terms vanish due to the no-flux conditions $J_{1,j}=J_{n_x+1,j}=0$ and $J_{i,1}=J_{i,n_y+1}=0$.  It follows that 
\begin{align*}
    \langle \mathcal{P}(C)~W,A\rangle &= -\sum_{i=1}^{n_x}\sum_{j=1}^{n_y} \lr{\frac{J^x_{i+1,j}-J^x_{i,j}}{\Delta x} + \frac{J^y_{i,j+1}-J^y_{i,j}}{\Delta y}}A_{i,j}\Delta x\Delta y \\
    &= \left( \sum_{i=2}^{n_x}\sum_{j=1}^{n_y} \frac{A_{i,j}-A_{i-1,j}}{\Delta x}J^x_{i,j} + \sum_{i=1}^{n_x}\sum_{j=2}^{n_y}\frac{A_{i,j}-A_{i,j-1}}{\Delta y}J^y_{i,j} \right)\Delta x\Delta y \\
    &= \sum_{i=2}^{n_x}\sum_{j=1}^{n_y}\partial_xA_{i,j} J_{i,j}^x\Delta x\Delta y +  \sum_{i=1}^{n_x}\sum_{j=2}^{n_y} \partial_yA_{i,j}J_{i,j}^y\Delta x\Delta y = \langle ~J,\nabla A\rangle, 
\end{align*}
in terms of the flux $~J = (J^x,J^y)$ defined previously.   All together, this framework provides a second-order and adjoint-consistent spatial discretization for the Allen-Cahn, Cahn-Hilliard, and Fokker-Planck models considered in the next section. 
\section{Numerical Results}\label{sec:numerics}
Using the spatial discretization just described, the OVP-based dynamics discovery \Cref{alg:learning} is now illustrated on several proof-of-concept examples of interest.  Emphasis is placed on demonstration of the core functionality, with more sophisticated applications reserved for future work.


\subsection{Recovery of Unknown Free Energy Density}\label{subsec:polynomial}
As a first demonstration, consider the problem of recovering an unknown free energy density from trajectory data for a system of Cahn-Hilliard type \cref{eq:Cahn-Hilliard}.  
Recall the discrete Ginzburg-Landau free energy $\mathcal{F}_{h,{\rm GL}}$ from \cref{eq:discrete_free_energies}, which includes a bulk free energy density $F(C)$ discretizing the continuous $F(c)$.  The goal is to recover the 
discrete function $F$ given snapshots of a corresponding Cahn-Hilliard model with constant mobility $~M = I$.  To this end, three different bulk free energy densities are considered:
\begin{enumerate}
    \item The quartic function $F(c) = \tfrac{1}{4}(1-c^2)^2$ representing the ``shallow quenching'' limit potential.
    \item The Flory-Huggins logarithmic potential $F(c) = \tfrac{\theta}{2}[(1-c)\log(1-c)+(1+c)\log(1+c)] - \tfrac{\theta_c}{2}c^2$ with $\theta=2$ and $\theta_c=2.7$,
    capturing competition between mixing entropy and molecular interactions.
    \item An asymmetric potential $F(c) = \tfrac{1}{4}c^4 - \tfrac{1}{2}c^2 + \tfrac{1}{3}c^3$ that mimics the case where mixture components have different molecular volumes (see \cite{weber2019physics}).
\end{enumerate}

\figCahnHilliard

Representative evolutions for each of these models, simulated using \Cref{alg:OVP}, are displayed in Figure~\ref{fig:threeCahnHilliard}, which are  referred to as ``Quartic'', ``Flory-Huggins'', and ``Asymmetric'', respectively.
The parameterizations of the learnable density $F_{\theta}$ considered for dynamics discovery include (1) a generic degree-six polynomial $F_{\theta}(C) = a_0+a_1C+...+a_6C^6$ in the cell centers (7 parameters $\theta = \{a_0,...,a_6\}$, mildly over-parameterized), along with (2) a shallow component-wise neural network $F_{\theta}(C_{i,j}) = w_2^\top\sigma(w_1C_{i,j} + b_1)$ with $w_1,w_2,b_1\in\mathbb{R}^{32}$ and $\sigma=\tanh$ activation function (96 parameters $\theta = \{w_1,w_2,b_1\}$, highly over-parameterized). 
Note that the absolute scale of the learnable free energy density $F_{\theta}$ does not affect the downstream dynamics, since the minimizer depends on the functional derivative $F_{\theta}'$ and its spatial derivatives $\Delta F_{\theta}'$ and $\nabla_{~n}F_{\theta}'$.  To reduce indeterminacy, we apply a simple gauge fix: the value $F_{\theta}(0)$ and the average slope $(1/2)(F_{\theta}(1)-F_{\theta}(-1))$ of the learnable surrogate are subtracted at each evaluation.\footnote{In practice, we encountered relatively smooth training both with and without gauge fixing.}  This has the net effect of enforcing $F_{\theta}(0)=0$ and (by the mean value theorem) $F_{\theta}'(\zeta)=0$ for some $\zeta\in[-1,1]$.  The free energies are trained according to \Cref{alg:learning} with a mean squared error loss function between the exact data $C$ and the reconstruction $\tilde{C}$.

 
 


To generate the requisite training data, the OVP integrator \Cref{alg:OVP} is simulated from the data $\mathcal{F}_{h,{\rm GL}}$ and $\mathcal{D}_{h,{\rm CH}}$ \eqref{eq:discrete_dissipation_potentials} with process mapping $\mathcal{P}$ on a regular $100^2$ grid.  For each bulk free energy density, data is collected from simulations involving 50 different initial concentration fields generated via Gaussian blurrings of a single random field, each integrated 20 steps in time with $\alpha=1$ and a step-size of $\Delta t = 10^{-4}$.  Since this is not enough time for the system to equilibrate, these short integration periods create a heterogeneous dataset with relatively good coverage of the level sets $F^{-1}(c)$.  Accordingly, reasonable recovery of the bulk free energy density is expected, along with reasonable trajectory-wise performance on a hold-out set of validation data. 

\figFEloss
\figFEgrid

\Cref{fig:61training,fig:61grid} show the loss curves obtained during training, along with the epoch-wise change in the corresponding learned free energy densities.  Observe that the curves quickly converge to meaningful representations of these densities with extrema in the expected places.  This indicates that the learning process has effectively captured the macroscopic physical behavior reflected in the equilibrium states of the system, despite the fact that the training data does not contain equilibrated solutions.  On the other hand, all models struggle to learn the tails of the free energy densities, and this is most pronounced in the Flory-Huggins case.  Here, both the neural network and polynomial models do not recover the full magnitude of the singularities at $c=\pm 1$, with the neural model exhibiting the greatest deviation.  This is likely due to the relative sparsity of near-singular concentrations in the training data: the large energy cost of these concentrations immediately drives them closer to equilibrium under Cahn-Hilliard evolution.  Regardless, it is clear that \Cref{alg:learning} is able to recover meaningful bulk free energy densities from these concentration data.

\tabThreeCases


A more quantitative view of this experiment is provided by \Cref{tab:threecases}, where the learned models are evaluated on the validation data.  Besides the training and validation losses, the time- and parameter-averaged relative $L^2$ errors in trajectory prediction along with the bulk free energy density relative $L^2$ errors are measured, 
\[RL^2(~q,\tilde{~q}) = \frac{1}{IK}\sum_{i=1}^{I}\sum_{k=1}^K \frac{| ~q_i^k - \tilde{~q}_i^k |_{L^2}}{|~q_i^k|_{L^2}}, \quad R\ell^2(F, F_{\theta}) = \frac{|F-F_{\theta}|_{L^2}}{|F|_{L^2}}, \]
where $|F|_{L^2} = \sqrt{\Delta s\sum |F(s_i)|^2}$ for uniformly spaced points $s_i\in[-1,1]$.
Evidently, both the polynomial and neural network models are able to reasonably predict the dynamics in every case, with relative $L^2$ errors ranging from $4\text{-}9\%$ in trajectory prediction.  Recovery of the bulk free energy density is similarly good in the quartic and asymmetric cases, with errors of $1\text{-}7\%$.  The case of Flory-Huggins is more interesting: recovery is quite good away from the tails where it is somewhat poor, leading to errors of roughly $20\%$.  Since the OVP dynamics will aggressively drive concentrations toward the wells, this is likely due to a heavy bias away from these regions in the training dataset.

Additional information is provided by \Cref{tab:poly}, which records the weights (coefficients) of the polynomial free energies. Observe that the largest coefficients are reasonably well captured in all cases except for Asymmetric, but all polynomial degrees contribute meaningfully to the learned surrogates.  If desired, this could likely be controlled through the use of a sparsity-promoting regularizer during training.  In any case, it is remarkable that the use of OVP guarantees stability and mass conservation under arbitrarily long rollouts in all cases considered, so that essential physics are preserved independently of the learning process.

\tabPoly



\figLearnedEvolutions

\Cref{fig:learned_evolutions} displays a representative evolution of the learned models in the asymmetric case, using an initial condition from the validation set and rolled out for ten times the length of the training window.  Observe that almost all the error in the polynomial and neural network surrogates is concentrated away from the phase field interface.  This indicates that the learned free energies in \Cref{fig:61grid} are capable of producing plausible evolutions that closely match the topology of the corresponding Cahn-Hilliard solutions.

\subsection{Recovery of Unknown Boundary Contributions}


It is frequently the case that field data are presented without explicit knowledge of the underlying boundary conditions that generated them. Even trajectories which are computed with fixed BCs may present with dynamically varying ones if snapshots of the entire domain are not provided. Moreover, realistic phase field models often include boundary terms in their free energy that describe coupling of the bulk concentration to a substrate or another material \cite{wu2021review}.  The next test of \Cref{alg:learning} considers the recovery of a boundary potential enforcing its own BCs.  

First, recall the example from \Cref{subsec:sobolev} that showed how the configuration and process spaces $Q,E$ determine the domains of the process mapping $\mathcal{P}$ and its transpose $\mathcal{P}^\top$ along with the BCs enforced by OVP.  When the process mapping $\mathcal{P}_c~w = -\nabla\cdot c~w$ and the training data varies nontrivially at the boundary, it is convenient to consider the ``default'' process bundle with fibers $E_c = \{~w\,|\,~w\cdot~n = 0\,\,\mathrm{on}\,\partial M\}$, so that the configuration space $Q$ can remain essentially unconstrained.  Additional natural conditions exhibited by the data can then be enforced implicitly via OVP, perhaps via additional terms in $\mathcal{F},\mathcal{D}$.

The present experiment involves a free energy $\mathcal{F}$ that includes a surface term.  For example, consider the Ginzburg-Landau free energy with surface potential, 
\begin{align}\label{eq:GL-surf}
    \tilde{\mathcal{F}}_{\rm GL}(c) = \mathcal{F}_{\rm GL}(c) + \int_{\partial\mathcal{M}} G(c)\,dS,
\end{align}
where $G:Q\to Q$ is a surface free energy density \cite{tsori2001diblock,wu2006guided}.  This leads to a Rayleighan of the form
\begin{align*}
    \mathcal{R}(c,~w) &= \dual{d\mathcal{F}_{\rm GL}(c)}{\mathcal{P}(c)~w} + \dual{G'(c)}{\mathcal{P}(c)~w}_{\partial M} + \mathcal{D}(c,~w) \\
    &= \ip{F'(c)-\alpha\Delta c}{\mathcal{P}(c)~w} + \ip{\alpha\partial_{~n}c + G'(c)}{\mathcal{P}(c)~w}_{\partial \mathcal{M}},
\end{align*}
where $\dual{\cdot}{\cdot}_{\partial \mathcal{M}}$ (resp. $\ip{\cdot}{\cdot}_{\partial \mathcal{M}}$) denotes the relevant duality pairing (resp. inner product) at the boundary, and the process mapping $\mathcal{P}(c)$ at the boundary is understood in the restricted sense.  It can be shown (see \Cref{app:omitted}) that when $\mathcal{P}_c~w = -\nabla\cdot c~w$ is the process mapping enforcing the continuity equation and the process variables $~w\in E_c$ are tangentially constrained, this restriction (or trace) has the formal expression $$(\mathcal{P}_c~w)|_{\partial\mathcal{M}} = -\nabla_{\partial}\cdot c~w + \nabla_{~n}~n\cdot c~w,$$
where $\nabla_{\partial}$ denotes the connection along the boundary $\partial\mathcal{M}$ induced by the connection $\nabla$ on the interior of $\mathcal{M}$.
Accordingly, the minimization of $\mathcal{R}$ inherent in OVP yields the same Cahn-Hilliard equations \eqref{eq:Cahn-Hilliard} in the interior and the same boundary condition $~M\nabla\mu\cdot~n = 0$ on $\partial\mathcal{M}$ inherited from fiber-wise tangentiality, but also the more complex boundary condition 
$$c\nabla_{~n}~n + c\nabla_{\partial}(\alpha\nabla_{~n}c + G'(c))=0 \quad {\rm on}\,\,\partial\mathcal{M}.$$ 
This describes how the curvature of the boundary interacts with the derivative of the surface density, providing an additional natural condition which can be calibrated to data via $G$.

For simplicity, the present experiment considers the case when the process mapping $\mathcal{P}=I$ is trivial, yielding the Allen-Cahn equation \eqref{eq:Allen-Cahn} with dynamic boundary condition 
\[\alpha\partial_{~n}c + G'(c) = 0\quad  on\,\, \partial\mathcal M.\] 
The goal is to investigate the performance of the dynamics discovery \Cref{alg:learning} in recovering an asymmetric polynomial profile $G(c) = c^2-4c^3+3c^4$.  This induces regions of the boundary with contact angles substantially different than $90^\circ$, reflecting the wetting affinity of the substrate.  To accomplish this, training data are prepared on a $100^2$ grid as before and the OVP integrator \Cref{alg:OVP} is simulated using the dissipation potential $\mathcal{D}_{h,{\rm AC}}$ along with the free energy 
\begin{equation}\label{eq:boundary_free_energy}
    \mathcal{F}_h(C) = \mathcal{F}_{h,{\rm GL}}(C) + \sum_{j=1}^{n_y} \bigl(G(C_{1,j}) + G(C_{n_x,j})\bigr)\Delta y + \sum_{i=2}^{n_x-1} \bigl(G(C_{i,1}) + G(C_{i,n_y})\bigr)\Delta x , 
\end{equation}
where the bulk free energy density $F(c) = (1/4)(1-c^2)^2$ in $\mathcal{F}_{h,{\rm GL}}$ is fixed at the quartic potential.  Again, data are collected from simulations involving 50 different initial concentration fields generated via Gaussian blurrings of a single random field, each integrated 20 steps in time with $\alpha=1$ and a step-size of $\Delta t = 10^{-3}$.  Since this is not enough time for the system to equilibrate, training data are relatively balanced and reasonable recovery of the surface potential $G$ is expected.




\figBClossdens

 \Cref{fig:bc_lossdens} displays the results of inferring $G = G_{\theta}$ in \eqref{eq:boundary_free_energy} using the polynomial and shallow neural network parameterizations from the previous experiment with a gauge fix for the constant term.   Notice that both are quite accurate in the $[-0.5, 1]$ regime, where deviations in the value of the potential are relatively mild.
 Conversely, the neural network model has a much more difficult time capturing the left tail near $c=-1$, likely because it is highly overparameterized and this regime is less present in the training data.  Both learned boundary profiles are reasonably different from the ground truth: $16.4\%$ for the polynomial model and $30.9\%$ for the neural network model.  Despite this, both models train reasonably well, with the neural network model producing solutions that are more accurate trajectory-wise but less accurate in recovering the surface free energy density.  

\figBCprofiles

Additional understanding is provided by \Cref{fig:bc_profiles}, which shows the trajectory-wise error incurred in a representative evolution from the validation set when rolled out for ten times the training window. Notice that the phase-field interface no longer intersects the boundary at right angles, illustrating the influence of the potential $G_{\theta}$ on the boundary condition.  Observe also that both learned models incur the majority of their error at interfaces, and the character of this error is substantially different between the models.  For example, the neural model tends to over-predict the concentration almost uniformly, while the polynomial model can over- or under-predict depending on the region.  Remarkably, trajectory-wise errors are small without the extrapolatory rollout: $1.66\%$ for the polynomial case and $0.7\%$ for the neural network case.  
However, this increases substantially during extrapolation, where trajectory-wise errors reach $23.1\%$ using the  polynomial surrogate and $17.7\%$ using the neural network one.  Regardless, the learned boundary potentials have the correct extrema and the monotonic free energy decay guaranteed by OVP is observed in all surrogate models, demonstrating that \Cref{alg:learning} is reasonably effective at dynamics discovery in this setting as well.






\subsection{Recovery of Nonlocal Potentials}


A popular variant of Cahn-Hilliard used to model di-block copolymer physics is given by the Ohta-Kawasaki free energy \cite{ohta1986equilibrium,cao2022globally},
\begin{align*}
    \mathcal{F}_{\text{OK}}(c) &= \int_{\mathcal{M}}\left(F(c) + \frac{\alpha}{2}\left| {\nabla c} \right|^{2} + \frac{\sigma}{2}\left( c - \bar{c} \right)( - \Delta)^{- 1} \left( c - \bar{c} \right)\right)\mathrm{d}V \\
    &= \mathcal{F}_{\rm GL}(c) + \frac{\sigma}{2}|c-\bar{c}|^2_{H^{-1}(\mathcal{M})},
\end{align*}
where $\bar{c} = \frac{1}{{\rm vol}\,\mathcal{M}}\int_\mathcal{M}c\,dV$ denotes the average value of $c$.  The inverse operator $(-\Delta)^{-1}$ is interpreted as the inverse of the (positive definite) Laplacian with homogeneous Neumann boundary conditions that defines the usual $H^{-1}$-norm $|\cdot|_{H^{-1}(\mathcal{M})}$ on the domain.  A similar calculation to that of \Cref{thm:AC-CH} (assuming trivial mobility $~M=I$) shows that the free energy $\mathcal{F}_{\rm OK}$ along with the dissipation $\mathcal{D}$ and process mapping $\mathcal{P}$ from the statement induce (via OVP) the PDE system
\begin{equation}\label{eq:OKCH}
\begin{cases}
    \dot{c} = \Delta\mu(c) - \sigma(c-\bar{c}) & \mathrm{in}\,\,\mathcal{M}, \\
    \mu = -\alpha\Delta c + F'(c) & \mathrm{in}\,\,\mathcal{M}, \\
    \partial_{~n}c = \nabla\mu\cdot~n = 0 & \mathrm{on}\,\,\partial\mathcal{M}.
\end{cases}
\end{equation}
This coincides with the usual Cahn-Hilliard model up to the affine shift $\sigma(c-\bar{c})$ in the interior equation.  Importantly, notice that the equations \eqref{eq:OKCH} are local but arise from a free energy $\mathcal{F}_{\rm OK}$ that is highly nonlocal, requiring concentration information from across the entire domain.   




The system \eqref{eq:OKCH} presents interesting challenge for the dynamics discovery \Cref{alg:learning}: is it possible to learn the inverse Neumann Laplacian $(-\Delta)^{-1}$ from concentration data?  To investigate this, it is useful to recall that $(-\Delta)^{-1}$ is diagonalized in Fourier space. Therefore, it is reasonable to pose the learning problem in terms of an appropriate spectral filter.  Considering the process space $E$ as before with fibers $E_c$ consisting of vector fields satisfying the tangentiality constraint $~w\cdot ~n = 0$ at the boundary $\partial\mathcal{M}$ and discretized as in \Cref{sec:spacedisc}, the relevant discrete Fourier transform is given by the well studied discrete cosine transform (DCT-II) \cite{strang1999discrete}.  Given index ranges $0\leq i,k\leq n_x-1$, $0\leq j,\ell\leq n_y-1$, consider the bases 
\[\phi_k(i) = \alpha_k\cos\Big[\frac{\pi}{n_x}(i+\tfrac12)k\Big], \quad \psi_\ell(j) = \beta_\ell\cos\Big[\frac{\pi}{n_y}(j+\tfrac12)\ell\Big], \]
where the normalization constants 
\begin{align*}
    \alpha_0 &= \sqrt{\frac{1}{n_x}}, \qquad \alpha_k = \sqrt{\frac{2}{n_x}}, \quad k\geq 1, \\ 
    \beta_0 &= \sqrt{\frac{1}{n_y}}, \qquad \beta_{\ell} = \sqrt{\frac{2}{n_y}}, \quad \ell\geq 1,
\end{align*}
are chosen to guarantee discrete orthonormality.  Letting $\Phi_{k,\ell}(i,j) = \phi_k(i)\psi_{\ell}(j)$ denote the tensor product basis, the DCT-II of a grid function $C\in\mathbb{R}^{n_x\times n_y}$ is defined for frequencies $k,\ell$ 
as 
\[\hat{C}_{k,\ell} = \sum_{i=0}^{n_x-1}\sum_{j=0}^{n_y-1}C_{i+1,j+1}\Phi_{k,\ell}(i,j). \]
In this basis, the inverse Neumann Laplacian $(-\Delta)^{-1}$ is a diagonal linear operator of size $n_xn_y\times n_xn_y$ with nonzero entries 
\[ a_{k,\ell} = \frac{1}{\lambda_k + \lambda_{\ell}}, \quad \lambda_k = \frac{2}{\Delta x^2}\bigg(1-\cos\Big(\frac{k\pi}{n_x}\Big)\bigg), \quad \lambda_\ell = \frac{2}{\Delta y^2}\bigg(1-\cos\Big(\frac{\ell\pi}{n_y}\Big)\bigg). \]
It is straightforward to check that $\{\lambda_k\}$ (resp. $\{\lambda_\ell\}$) are the eigenvalues of the three-point discrete Laplacian in the 1D basis of cosines along $x$ (resp. along $y$).  However, observe that $(-\Delta)^{-1}$ is not itself a tensor product of 1D operators: its entries depend on a mixture of information from both spatial dimensions.  

This spectral discretization of the inverse discrete Neumann Laplacian yields an efficient computation of the $H^{-1}$ norm appearing in the Ohta-Kawasaki free energy $\mathcal{F}_{\rm OK}$.  Since the average value $\bar{C}$ is proportional to the $(0,0)$-component of the Fourier representation $\hat{C}$ via $\hat{C}_{0,0} = \sqrt{n_xn_y}\bar{C}$, it follows that
\[\big|{C}-{\bar{C}}\big|^2_{H^{-1}(\mathcal{M})} = \sum_{(k,\ell)\neq(0,0)}\frac{\hat{C}_{k,\ell}^2}{\lambda_k + \lambda_{\ell}}\Delta x\Delta y,\]
meaning that the nonlocal part of the free energy $\mathcal{F}_{\rm OK}$ can be expressed as a quadratic form in frequency space.  Therefore, it is reasonable to pose the problem of learning $(-\Delta)^{-1}$ in terms of learning a constant matrix operator (i.e., a convolution kernel) acting on the Fourier coefficients $\hat{C}$, possibly with some band-limit on the allowed frequencies.  This is analogous to the main idea behind Fourier Neural Operators (FNOs) \cite{li2020fourier}, which propagate information using learnable neural networks in Fourier space.  In the present case, this means learning an approximation to the discrete Ohta-Kawasaki potential,
\begin{align*}
    \mathcal{F}_{h,{\rm OK}}(C) = \mathcal{F}_{h,{\rm GL}}(C) + \frac{\sigma}{2}\big|C-\bar{C}\big|^2_{H^{-1}(\mathcal{M})},
\end{align*}
where the $H^{-1}$ norm is replaced with $\mathrm{vec}(\hat{C})^\intercal~B\,\mathrm{vec}(\hat{C})$ for a learnable matrix $~B$. 
Two experiments will be considered: one which explicitly uses the diagonal structure of the Neumann Laplacian operator, and one which does not. For consistency, both will use the potential strength $\sigma=250$.

\subsubsection{Experiment 1: Using Diagonality}
The first experiment carried out in this setting attempts to learn a frequency-space matrix $~A\in\mathbb{R}^{n_x\times n_y}$ of positive diagonal entries, so that the learnable free energy takes the form
\begin{align*}
    \mathcal{F}_{h,\theta}(C) = \mathcal{F}_{h,{\rm GL}}(C) + \frac{\sigma}{2}\sum_{(k,\ell)\neq (0,0)} \frac{\hat{C}_{k,\ell}^2}{[~A]_{k\ell}}.
\end{align*}
Instead of letting all entries of $~A$ be learnable scalars with positivity constraints, it is assumed that $n_x=n_y$ and these entries take the form
$[~A]_{k\ell} = [{\rm softplus}\,~a]_k + [{\rm softplus}\,~a]_\ell$ in terms of an unconstrained learnable vector $~a\in\mathbb{R}^{n_x}$ and the element-wise activation function ${\rm softplus}(x) = \ln{(1+e^x)}$.

\figOneDeigenvalues
\figOneDevolutions

As in previous examples, the dataset consists of 50 initial conditions comprised of random normal concentration fields blurred by (random uniform) Gaussian filters and simulated for 20 steps each with a step-size of $\Delta t= 10^{-4}$.  The data are split 90/10 for training/validation, and the initial learning rate for the Adam optimizer is 0.1.  The results of this experiment are displayed in \Cref{fig:OK_1D_eigenvalues}, where it is remarkable that the first 20 eigenvalues (minus the first) are learned reasonably well while the remaining are not.  This has a consequence on the rollouts depicted in \Cref{fig:OK_1D_evolutions}, where differences in the ground truth and learned solutions become visibly apparent around $3\times$ the training window. On the other hand, the large-scale behavior of the system, including striping and overall topology, is reasonably well captured even at $10\times$ the training window. 

\subsubsection{Experiment 2: Using Low-Frequency Information}

The next experiment involving learning the influence of the Ohta-Kawasaki potential reduces the amount of assumptions put on the learnable model form.  Rather than assuming the desired operator is diagonal in frequency space, the learnable approximation will rely on only the first $r\ll n_x$ largest frequencies.  In particular, the
learnable free energy will take the form
\begin{align*}
    \mathcal{F}_{h,\theta}(C) = \mathcal{F}_{h,{\rm GL}}(C) + \frac{\sigma}{2}\sum_{(k,\ell) \leq (r,r)}\sum_{(m,n)\leq (r,r)} \hat{C}_{k,\ell}\,[~K]_{k\ell,mn}\,\hat{C}_{m,n},
\end{align*}
for a matricized operator $~K \in\mathbb{R}^{r^2\times r^2}$ acting on the first $r^2$ Fourier coefficients.  This is a common strategy employed in FNO-like settings (e.g., \cite{li2020fourier}) that reduces the complexity of the learning problem while also increasing its robustness to high-frequency noise.  Note that an explicit symmetrization $~K= {\rm softplus}(~M + ~M^\intercal)$ for learnable $~M\in\mathbb{R}^{r^2\times r^2}$ is included to reduce indeterminacy: the entries of $~K$ should be positive, and only the symmetric part of $~K$ contributes to the frequency-space norm.

The experimental task is to learn a low-frequency operator $~K={\rm softplus}(~M + ~M^\intercal)$
from solutions to OVP with free energy $\mathcal{F}_{h,{\rm OK}}$ and dissipation potential $\mathcal{D}_{h,{\rm CH}}$.  Given the previous experiment where eigenvalues of the Laplacian were targeted, $r=20 \ll 100 = n_x$ is chosen as the band limit.  Therefore, it is expected that the spectrum of $~K$ will not necessarily approximate the first $r^2$ eigenvalues of $(-\Delta)^{-1}$, since the entries of $~K$ have to compensate for the high-frequency information that is left out.  The same training and validation data are used in this case, with the same learning rate of 0.1.  \Cref{fig:OK_2D_eigenvalues} compares the spectra of the (inverse) learned operator with the first $20^2$ (smallest) eigenvalues of the Laplacian that have the largest effect on the $H^{-1}$ norm.  Notice that the spectrum of $~K$ is substantially different; it exhibits a similar general trend in magnitude with larger frequencies corresponding to larger values, but it is less smooth and there are large values around the diagonal that do not appear in the ground truth case.  These differences propagate into the rollouts depicted in \Cref{fig:OK_2D_evolutions}, where deviations between the learned model and the ground truth are even more pronounced than in the previous experiment.  Conversely, the use of OVP in the surrogate ensures that all trajectories still remain stable, mass-conserving, and monotonically energy decreasing according to the governing physics.  It is expected that trajectory-wise accuracy could be substantially improved with a more sophisticated model class and learning strategy.





\figTwoDeigenvalues
\figTwoDevolutions

\section{Conclusion and Future Work}\label{sec:conclusion}

Onsager's Variational Principle (OVP) has been discussed for the purposes of variational integration and dynamics discovery.  Through an appropriate discrete formulation, it has been shown that OVP-based timestepping preserves key invariants and exhibits  unconditional energy stability at the fully discrete level, guaranteeing structural properties similar to the continuous variational setting.  This led to an algorithm for dynamics discovery based on learned corrections to functional-level information.  Using observational or simulated data to calibrate the free energy functional in an OVP-based integrator yields surrogate solutions that are stable and appropriately conservative under arbitrarily long rollouts, regardless of what is learned during training.  Examples from phase-field modeling have demonstrated that this procedure is effective in recovering the effects of unknown bulk free energy densities, boundary potentials, and nonlocal integral kernels from simulated data.  

Future efforts will focus on multiphysics coupling and the incorporation of more sophisticated learning strategies. In addition to the bulk-boundary interactions considered here, OVP can also be used to encode more complicated physical mechanisms, such as fluid-structure interactions and material contact, through additional terms in the governing potentials.  It is interesting to consider the design of surrogate models for such phenomena via the proposed strategy of OVP integration with learnable functionals, potentially using model classes like Gaussian processes that include a notion of uncertainty.  Additionally, it remains to be seen how well this OVP-based approach is able to learn from ``real-world'' observational data that contains noise and other defects.  Since the variational integrator presented here is guaranteed to generalize appropriately (though perhaps not accurately) out of distribution, we expect that this strategy may help separate useful signal from sparse and noisy data in resource-limited settings.

\section*{Acknowledgments}

The authors thank Oliver Gross for significant help in the design and creation of Figure 1, leading to greatly improved clarity and visual appeal.
Support for this work was received through the U.S. Department of Energy, Office of Science, Office of Advanced Scientific Computing Research, Mathematical Multifaceted Integrated Capability Centers (MMICCS) program, under Field Work Proposal 22025291 and the Multifaceted Mathematics for Predictive Digital Twins (M2dt) project.  
Sandia National Laboratories is a multimission laboratory managed and operated by National Technology \& Engineering Solutions of Sandia, LLC, a wholly owned subsidiary of Honeywell International Inc., for the U.S.~Department of Energy's National Nuclear Security Administration under contract DE-NA0003525.
This paper describes objective technical results and analysis. Any subjective views or opinions that might be expressed in the paper do not necessarily represent the views of the U.S.~Department of Energy or the United States Government.
\appendix

\begin{appendices}

\section{Omitted computations}\label{app:omitted}
Here we record the computations proving some results used in the body.

\begin{proof}[Alternative proof of \cref{thm:wasserstein-grad-flow}]\label{pf:no-isomorphism}
      First, observe that the dual fibers $E_c^* = \{~\alpha \in \Omega^n(\mathcal{M})\otimes \Omega^1(\mathcal{M})\}$ contain $n$-form valued differential $1$-forms on the $n$-manifold $\mathcal{M}$.  It is convenient to write $~\alpha = ~w^\flat \otimes dV$ in terms of the flat operator\footnote{Note that this is distinct from the flat operator $\flat:E\to E^*$ used in OVP.} with respect to the (potentially non-Euclidean) dot product $\cdot$ and the volume form $dV$ on $\mathcal{M}$, so that the evaluation pairing on $E_c^*\otimes E_c$ is given by 
    \begin{equation*}
        \dual{~\alpha}{\mathring{~w}} = \int_M ~w^\flat(\mathring{~w})\,dV = \int_M ~w\cdot\mathring{~w}\,dV
    \end{equation*}
    for $~\alpha\in E_c^*$ and $\mathring{~w}\in E_c$.  With this, the fiber derivative $\partial_{~w}\mathcal{D}$ of $\mathcal{D}$ can be computed at the point $c\in Q$ by considering 
    \begin{equation*}
        \dual{\partial_{~w}\mathcal{D}_c(~w)}{\mathring{~w}} = \int_M c~w\cdot\mathring{~w}\,dV = \dual{c~w^\flat\otimes dV}{\mathring{~w}},
    \end{equation*}
    establishing that the fiber-wise flat operator $\flat_c = \partial_{~w}\mathcal{D}_c: E_c\to E_c^*$ is given by $~w\mapsto c~w^\flat\otimes dV$.  Therefore, its inverse $\sharp_c:E_c^*\to E_c$ is $~w^\flat\otimes dV = ~\alpha\mapsto c^{-1}~w$.  It remains to calculate the transpose $\mathcal{P}^\top:T^*Q\to E^*$.  For any density $\beta = f\,dV\in = T_c^*Q$, it follows that 
    \begin{equation*}
        \dual{\beta}{\mathcal{P}_c\mathring{~w}} = -\int_M f\nabla\cdot c\mathring{~w}\, dV = \int_M c\nabla f\cdot \mathring{~w}\, dV = \dual{c\,df\otimes dV}{\mathring{~w}} = \dual{\mathcal{P}^\top_c\beta}{\mathring{~w}},
    \end{equation*}
    since $df = (\nabla f)^\flat \in\Omega^1(\mathcal{M})$ and $\mathring{~w}\cdot~n = 0$ on $\partial M$.  This demonstrates that $\mathcal{P}^\top$ is the mapping (suppressing tensor products) $f\,dV \mapsto c\,df\, dV$.  Finally, it follows that $\mathcal{P}\sharp\mathcal{P}^\intercal|_c$ maps $f(c)\,dV \mapsto -\nabla \cdot c\nabla f(c)$.  The conclusion follows upon observation that $\mathcal{P}$ is surjective and $d\mathcal{F} = \mathrm{grad}_{L^2}\,\mathcal{F}\otimes dV$.
\end{proof}

\begin{theorem}\label{thm:bndry_decomp}
    Suppose $(M^n,\nabla) \subset (N^m, \bar{\nabla})$ is an inclusion of Riemannian manifolds with Levi-Civita connections $\nabla$ and $\bar{\nabla}$. Denoting both the metric on $N$ and its pullback on $M$ by $\ip{\cdot}{\cdot}$, it follows that the divergence of a vector field $~w$ on $N$ decomposes as 
    \begin{equation*}
        \bar{\nabla}\cdot~w = \nabla\cdot~w_{\parallel} - \ip{~H}{~w_{\perp}} + ~P^\perp\lr{\bar{\nabla}\cdot}~w,
    \end{equation*}
    in terms of the orthogonal projections $~w_{\parallel},~w_{\perp}$ of this field on the tangent resp. normal bundles to $M$ in $N$, the mean curvature vector field $~H = \mathrm{tr}\,\mathbb{II}$ which is the trace of the second fundamental form, and the orthogonal projection of the divergence $~P^\perp\lr{\bar{\nabla}\cdot}~v$ onto the normal bundle $(TM)^\perp$.  Here, $\mathbb{II}:TM\times TM\to TM^\perp$ is defined as $\mathbb{II}(~v,~w) = \lr{\bar{\nabla}_{~v}~w}^\perp$ and $~P^\perp\lr{\bar{\nabla}\cdot}~v$ is the trace of the quadratic form $\ip{\cdot}{\bar{\nabla}_{(\cdot)}~v}: TM^\perp\times TM^\perp \to \mathbb{R}$.
\end{theorem}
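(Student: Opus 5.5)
The plan is to compute the ambient divergence as a trace in an adapted orthonormal frame and split that trace into tangential and normal blocks. Fix a point $p\in M$ and choose a local orthonormal frame $\{~e_1,\dots,~e_n,~\nu_1,\dots,~\nu_{m-n}\}$ of $TN|_M$ near $p$, with the $~e_i$ tangent to $M$ and the $~\nu_\alpha$ spanning $(TM)^\perp$. Since $\bar{\nabla}\cdot~w = \mathrm{tr}\,\bar{\nabla}~w$, we have
\begin{equation*}
    \bar{\nabla}\cdot~w = \sum_{i=1}^n\ip{\bar{\nabla}_{~e_i}~w}{~e_i} + \sum_{\alpha=1}^{m-n}\ip{\bar{\nabla}_{~\nu_\alpha}~w}{~\nu_\alpha}.
\end{equation*}
The second sum is, by definition, the trace of the form $\ip{\cdot}{\bar{\nabla}_{(\cdot)}~w}$ restricted to $TM^\perp\times TM^\perp$. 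This is exactly the term $~P^\perp\lr{\bar{\nabla}\cdot}~w$ in the statement, so no further work is needed for it. All the remaining effort goes into the tangential block.

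For the tangential block, write $~w = ~w_\parallel + ~w_\perp$ along $M$ and treat each piece separately.

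\textbf{Parallel piece.} The Gauss formula gives $\bar{\nabla}_{~e_i}~w_\parallel = \nabla_{~e_i}~w_\parallel + \mathbb{II}(~e_i,~w_\parallel)$. The second fundamental form is normal, so it drops out of the inner product with $~e_i$. Summing over $i$ then yields $\sum_i\ip{\nabla_{~e_i}~w_\parallel}{~e_i} = \nabla\cdot~w_\parallel$, the intrinsic divergence on $M$. This step uses that the Levi-Civita connection of the pullback metric is the tangential projection of $\bar{\nabla}$.

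\textbf{Normal piece.} Since $\ip{~w_\perp}{~e_i}=0$ along $M$, differentiating in the tangent direction $~e_i$ gives the Weingarten-type identity
\begin{equation*}
    \ip{\bar{\nabla}_{~e_i}~w_\perp}{~e_i} = -\ip{~w_\perp}{\bar{\nabla}_{~e_i}~e_i} = -\ip{~w_\perp}{\mathbb{II}(~e_i,~e_i)}.
\end{equation*}
Summing over $i$ gives $-\ip{~H}{~w_\perp}$ with $~H=\mathrm{tr}\,\mathbb{II}$.

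Adding the tangential and normal contributions reproduces the claimed decomposition. Frame-independence of each term is automatic, because each is a trace of a bilinear form over a subbundle.

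The main obstacle is bookkeeping rather than depth. The differentiation in the Weingarten step is only legitimate along directions tangent to $M$, where $~w_\parallel$ and $~w_\perp$ are defined. The normal-direction block, by contrast, must be evaluated on the full ambient field $~w$, so one needs $~w$ defined on a neighborhood of $M$ in $N$. I would make this explicit, then check the sign convention for $~H$ against the paper's use of the result in the boundary trace formula, where $M=\partial\mathcal{M}$ has codimension one and $\nabla_{~n}~n$ appears.
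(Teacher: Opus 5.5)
Your proposal is correct and follows essentially the same route as the paper: an adapted orthonormal frame splits the ambient trace into tangential and normal blocks, the Gauss formula handles the parallel part, and the Weingarten relation handles the normal part, giving $-\langle H, w_\perp\rangle$; the only difference is that you derive the Weingarten relation by differentiating $\langle w_\perp, e_i\rangle = 0$ along $e_i$, where the paper cites it. Your remark that the tangential block only needs $w_\parallel, w_\perp$ along $M$, while the normal block $P^\perp(\bar{\nabla}\cdot)w$ needs $w$ on a neighborhood of $M$ in $N$, is a useful clarification that the paper leaves implicit.
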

\begin{proof}
    Recall the Gauss-Codazzi equations for the decomposition of the connection $\bar{\nabla}$ relative to $M$ in $N$, which state that, for vector fields $~v,~w_{\parallel}\in \Gamma\lr{TM}$ in the tangent bundle to $M$, the ambient connection decomposes as 
    \[ \bar{\nabla}_{~v}~w_{\parallel} = \lr{\bar{\nabla}_{~v}~w_{\parallel}}^\top + \lr{\bar{\nabla}_{~v}~w_{\parallel}}^\perp = \nabla_{~v}~w_{\parallel} + \mathbb{II}\lr{~v,~w_{\parallel}}, \]
    where $\top$ and $\perp$ denote orthogonal projections to $TM$ and $TM^\perp$, relative to $TN$.  Observe that, for $~w_{\perp}\in\Gamma\lr{TM^\perp}$, there is a complementary splitting in terms of the induced connection $\nabla^\perp$ in $TM^\perp$,
    \[\bar{\nabla}_{~v}~w_{\perp} = \lr{\bar{\nabla}_{~v}~w_{\perp}}^\top + \lr{\bar{\nabla}_{~v}~w_{\perp}}^\perp = \lr{\bar{\nabla}_{~v}~w_{\perp}}^\top+ \nabla^\perp_{~v}~w_{\perp}, \]
    which is related to the second fundamental form via the Weingarten equation: for any $~u\in\Gamma\lr{TM}$,
    \[ \ip{\bar{\nabla}_{~v}~w_{\perp}}{~u} = -\ip{\mathbb{II}(~v,~u)}{~w_{\perp}}. \]
    With this, consider a local orthonormal basis $\{~e_{I}\}_{I=1}^{m} = \{~e_i\}_{i=1}^{m} \cup \{~e_{a}\}_{a=1}^{m-n}$ for $TN$ and its decomposition into local orthonormal bases for $TM$ resp. $\lr{TM}^\perp$.  It follows that the ambient divergence of a vector $~w=~w_{\parallel} + ~w_{\perp}$ decomposes as (Einstein summation assumed)
    \begin{align*}
    \bar{\nabla}\cdot~w &= \ip{~e_i}{\bar{\nabla}_{~e_i}~w} + \ip{~e_a}{\bar{\nabla}_{~e_a}~w} = \ip{~e_i}{\bar{\nabla}_{~e_i}~w_{\parallel}} + \ip{~e_i}{\bar{\nabla}_{~e_i}~w_{\perp}} + ~P^\perp\lr{\bar{\nabla}\cdot}~w \\
    &= \nabla\cdot~w_{\parallel} - \ip{\mathbb{II}(~e_i,~e_i)}{~w_{\perp}} + ~P^\perp\lr{\bar{\nabla}\cdot}~w = \nabla\cdot~w_{\parallel} - \ip{~H}{~w_{\perp}} + ~P^\perp\lr{\bar{\nabla}\cdot}~w,
    \end{align*}
    as claimed.
\end{proof}


\begin{corollary}
    Let $~w\in\Gamma(T\mathcal{M})$ be a vector field on the manifold $\mathcal{M}$ with boundary $\partial{M}$ and affine connection $\nabla$.  Writing $~w = ~w_{\parallel}+w_{\perp}~n$ in terms of the outward-pointing normal $~n$ to $\partial M$, the divergence $\nabla\cdot~w$ on $(M,\nabla)$ restricts to $(\partial M,\nabla_{\partial})$ in the following way:
    \[\lr{\nabla\cdot~w}|_{\partial M} = \nabla_{\partial}\cdot~w_{\parallel} - \ip{\nabla_{~n}~n}{~w_{\parallel}} + \lr{\partial_{~n} - \ip{~H}{~n}}w_{\perp}.\]
\end{corollary}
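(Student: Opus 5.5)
The plan is to apply \Cref{thm:bndry_decomp} with $N=\mathcal{M}$ (codimension $m-n=1$) and $M=\partial\mathcal{M}$. Near the boundary we extend the outward unit normal $~n$ to a unit vector field, for instance by taking $~n$ to be the gradient of the distance function to $\partial\mathcal{M}$. Then $TM^\perp$ is spanned by the single field $~e_1=~n$. We split $~w=~w_{\parallel}+w_{\perp}~n$ and evaluate each of the three terms in \Cref{thm:bndry_decomp} on the boundary, with $\nabla$ playing the role of $\bar\nabla$ and $\nabla_\partial$ the role of the induced connection.

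The first step is to handle the tangential sum $\ip{~e_i}{\nabla_{~e_i}~w}$ over an orthonormal frame $\{~e_i\}$ of $T\partial\mathcal{M}$. The $~w_{\parallel}$ contribution is $\ip{~e_i}{\nabla_{~e_i}~w_{\parallel}}=\nabla_\partial\cdot~w_{\parallel}$, by the Gauss formula exactly as in the theorem. For the normal contribution we use the Leibniz rule, $\nabla_{~e_i}(w_\perp~n)=(~e_iw_\perp)~n+w_\perp\nabla_{~e_i}~n$. The first piece is orthogonal to $~e_i$. By Weingarten, the second piece gives $-w_\perp\ip{\mathbb{II}(~e_i,~e_i)}{~n}=-\ip{~H}{~n}w_\perp$.

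The second step is the normal-direction term $\ip{~n}{\nabla_{~n}~w}$, which the theorem leaves packaged as $~P^\perp(\bar\nabla\cdot)~w$. Expanding, we get $\ip{~n}{\nabla_{~n}~w_{\parallel}}+\partial_{~n}w_\perp+w_\perp\ip{~n}{\nabla_{~n}~n}$. The last term vanishes because $|~n|=1$ gives $\ip{~n}{\nabla_{~n}~n}=\tfrac12\partial_{~n}|~n|^2=0$. For the first term we differentiate $\ip{~n}{~w_{\parallel}}=0$ along $~n$ and use metric compatibility, which gives $\ip{~n}{\nabla_{~n}~w_{\parallel}}=-\ip{\nabla_{~n}~n}{~w_{\parallel}}$.

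Summing the pieces yields $\nabla_\partial\cdot~w_{\parallel}-\ip{\nabla_{~n}~n}{~w_{\parallel}}+(\partial_{~n}-\ip{~H}{~n})w_\perp$, which is the claimed formula.

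The main obstacle is the second step. That term is where $\nabla_{~n}~n$ appears, and it requires the extension of $~n$ off $\partial\mathcal{M}$ to be unit length and the metric orthogonality $\ip{~n}{~w_{\parallel}}=0$ to hold in a neighborhood rather than only on $\partial\mathcal{M}$. We therefore fix that extension and extend the decomposition $~w=~w_{\parallel}+w_\perp~n$ to the collar before differentiating. We also need $\nabla$ to be metric compatible, i.e.\ Levi-Civita as in \Cref{thm:bndry_decomp}, despite the corollary's wording ``affine connection''. With the distance-function extension, $\nabla_{~n}~n=0$ and the middle term disappears, but we keep it in general so that the argument works for any unit extension.
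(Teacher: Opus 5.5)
Your proposal is correct and is essentially the paper's proof. You apply \Cref{thm:bndry_decomp} in codimension one with $\partial\mathcal{M}\subset\mathcal{M}$, and evaluate the normal term $\langle \mathbf{n}, \nabla_{\mathbf{n}}\mathbf{w}\rangle$ using $|\mathbf{n}|=1$ and metric compatibility to get $\partial_{\mathbf{n}}w_\perp - \langle \nabla_{\mathbf{n}}\mathbf{n}, \mathbf{w}_\parallel\rangle$. Your extra care about extending $\mathbf{n}$ and the splitting $\mathbf{w}=\mathbf{w}_\parallel+w_\perp\mathbf{n}$ to a collar, and about needing a metric (Levi-Civita) connection rather than an arbitrary affine one, makes explicit what the paper leaves implicit.
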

\begin{proof}
    Observe that 
    \begin{align*} 
    ~P^\perp(\nabla\cdot)~w &= \ip{~n}{\nabla_{~n}~w} = \ip{~n}{\nabla_{~n}~w_{\parallel}} + \ip{~n}{\nabla_{~n}\lr{w_{\perp}~n}} = -\ip{\nabla_{~n}~n}{~w_{\parallel}} + \partial_{~n}w_{\perp},
    \end{align*}
    since $\ip{~n}{~n} = 1$ and therefore $\ip{\nabla_{~n}~n}{~n}=0$.  The result now follows from \Cref{thm:bndry_decomp}.
\end{proof}

\begin{theorem}\label{thm:sobolev}
    Let $L^2(\mathcal{M},T\mathcal{M})$ denote the space of square-integrable vector fields on a manifold $\mathcal{M}$, let $\mathring{H}^1(\mathcal{M}) = \{u\in H^1(\mathcal{M}): \int_{\mathcal{M}}u\,dV = 0\}$ denote the Sobolev space of $H^1(\mathcal{M})$ functions with zero mean, and let $H^{-1}=(\mathring{H}^1)^*$
    denote its dual space of formal divergences.
    Then, the free energy functionals $\mathcal{F}:\mathring{H}^1(\mathcal{M})\to\mathbb{R}$ and $\mathcal{G}:H^{-1}(\mathcal{M})\to\mathbb{R}$ have the Sobolev gradients
    \[
    \mathrm{grad}_{\mathring{H}^1}\,\mathcal{F} = (-\Delta)^{-1}d\mathcal{F},
    \qquad
    \mathrm{grad}_{H^{-1}}\,\mathcal{G} = (-\Delta)\, d\mathcal{G},
    \]
    in terms of the Neumann Laplacian $-\Delta: \mathring{H}^1(\mathcal{M})\to H^{-1}(\mathcal{M})$, its inverse $(-\Delta)^{-1}$, and the functional derivatives $d\mathcal{F}\in H^{-1}(\mathcal{M})$, $d\mathcal{G}\in \mathring{H}^1(\mathcal{M})$.
\end{theorem}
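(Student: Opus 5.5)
The plan is to fix the inner products on the two Sobolev spaces and apply the definition of the Riesz gradient used throughout \Cref{sec:perspectives}: $\dual{d\mathcal{F}}{\mathring{q}} = g\lr{\mathrm{grad}_g\mathcal{F},\mathring{q}}$ for all $\mathring{q}$.

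On $\mathring{H}^1(\mathcal{M})$ take the inner product $\ip{u}{v}_{\mathring{H}^1} = \int_\mathcal{M}\nabla u\cdot\nabla v\,dV$. It is positive definite on mean-zero functions by the Poincar\'e inequality. With homogeneous Neumann data, integration by parts gives $\ip{u}{v}_{\mathring{H}^1} = \dual{-\Delta u}{v}$, where $-\Delta:\mathring{H}^1\to H^{-1}$ is understood weakly. By the Lax--Milgram (or Riesz representation) theorem this map is an isomorphism, so $(-\Delta)^{-1}$ is well defined. The $H^{-1}$ inner product is then the dual one, $\ip{\alpha}{\beta}_{H^{-1}} = \dual{\alpha}{(-\Delta)^{-1}\beta}$, matching the $H^{-1}$ norm used in the Ohta--Kawasaki section.

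For $\mathcal{F}$: we want $G\in\mathring{H}^1$ with $\ip{G}{\mathring{u}}_{\mathring{H}^1} = \dual{d\mathcal{F}}{\mathring{u}}$ for every $\mathring{u}\in\mathring{H}^1$. The left side equals $\dual{-\Delta G}{\mathring{u}}$, so it suffices that $-\Delta G = d\mathcal{F}$ in $H^{-1}$, which gives $G=(-\Delta)^{-1}d\mathcal{F}$. Uniqueness follows from nondegeneracy of the inner product.

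For $\mathcal{G}$: here $d\mathcal{G}$ lies in $(H^{-1})^*\cong\mathring{H}^1$ via the canonical pairing, so $\dual{d\mathcal{G}}{\mathring{\beta}} = \dual{\mathring{\beta}}{d\mathcal{G}}$ for $\mathring{\beta}\in H^{-1}$. We seek $\Gamma\in H^{-1}$ with $\dual{\Gamma}{(-\Delta)^{-1}\mathring{\beta}} = \dual{\mathring{\beta}}{d\mathcal{G}}$ for all $\mathring{\beta}$. Take $\Gamma = -\Delta\, d\mathcal{G}$. Then $\dual{-\Delta\, d\mathcal{G}}{(-\Delta)^{-1}\mathring{\beta}} = \dual{\mathring{\beta}}{d\mathcal{G}}$, using the symmetry $\dual{-\Delta u}{v}=\dual{-\Delta v}{u}$, which is just symmetry of the Dirichlet form, applied with $u=d\mathcal{G}$ and $v=(-\Delta)^{-1}\mathring{\beta}$. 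So $\mathrm{grad}_{H^{-1}}\mathcal{G} = (-\Delta)\,d\mathcal{G}$.

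The main point needing care is the functional-analytic setup, not the computation. Three things must be checked:
\begin{itemize}
\item $-\Delta$ is a bijection $\mathring{H}^1\to H^{-1}$; this needs the mean-zero restriction to kill constants, together with Poincar\'e.
\item The identification of $(H^{-1})^*$ with $\mathring{H}^1$ is consistent with the pairing used.
\item The Neumann boundary term vanishes in the weak formulation.
\end{itemize}
I would address all three at once by defining $-\Delta$ weakly through the Dirichlet form. Then no boundary integrals appear explicitly, and the Neumann condition is natural, as in the proof of \Cref{thm:AC-CH}.

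Finally, as a consistency check with the Cahn--Hilliard discussion, I would note that with $d\mathcal{G}$ identified with the chemical potential $\mu$, the $H^{-1}$ gradient flow $\dot c = -\mathrm{grad}_{H^{-1}}\mathcal{G} = \Delta\mu$ recovers \eqref{eq:Cahn-Hilliard} with $\mathbf{M}=I$.
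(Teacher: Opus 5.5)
Your proposal is correct and follows essentially the same route as the paper: the Dirichlet-form inner product on $\mathring{H}^1(\mathcal{M})$ identifies $-\Delta$ with the Riesz (flat) map, and the $H^{-1}$ inner product is taken as the dual one, $\ip{f}{g}_{H^{-1}} = \dual{f}{(-\Delta)^{-1}g}$, which is equivalent to the paper's $\ip{f^\sharp}{g^\sharp}_{\mathring{H}^1}$. Both gradients then follow by matching the defining identity against all variations; the only cosmetic difference is that for $\mathcal{G}$ you verify the candidate $-\Delta\,d\mathcal{G}$ using symmetry of the Dirichlet form, whereas the paper solves directly for $(\mathrm{grad}_{H^{-1}}\mathcal{G})^\sharp = d\mathcal{G}$ (uniqueness there follows from nondegeneracy of the $H^{-1}$ inner product, exactly as in your $\mathcal{F}$ case).
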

\begin{proof}
    Since functions in $\mathring{H}^1(\mathcal{M})$ have zero mean, the Poincar\'e inequality implies that
    \[
    \ip{u}{v}_{\mathring{H}^1} := \int_{\mathcal{M}} \nabla u\cdot \nabla v\,dV = \langle(-\Delta)u\,|\,v\rangle,
    \]
    defines an inner product on $\mathring{H}^1(\mathcal{M})$ in terms of the Neumann Laplacian
    $
    -\Delta : \mathring{H}^1(\mathcal{M}) \to H^{-1}(\mathcal{M})
    $ and the duality pairing $\langle\cdot\,|\,\cdot\rangle : H^{-1}\times\mathring{H}^1\to\mathbb{R}$. 
    By Riesz representation, this identifies $-\Delta$ with the flat map
    \[
    \flat : \mathring{H}^1(\mathcal{M}) \to H^{-1}(\mathcal{M}),
    \qquad
    u \mapsto u^\flat = (-\Delta)u,
    \]
    whose inverse is given by $
    \sharp = (-\Delta)^{-1}: H^{-1}(\mathcal{M}) \to \mathring{H}^1(\mathcal{M}).
    $
    
    Now, let $\mathcal{F}:\mathring{H}^1(\mathcal{M})\to\mathbb{R}$. 
    For every variation $\mathring{u}\in \mathring{H}^1(\mathcal{M})$, it follows that
    \[
    \dual{d\mathcal{F}(u)}{\mathring{u}}
    =
    \ip{\mathrm{grad}_{\mathring{H}^1}\mathcal{F}(u)}{\mathring{u}}_{\mathring{H}^1}
    =
    \dual{(-\Delta)\,\mathrm{grad}_{\mathring{H}^1}\mathcal{F}(u)}{\mathring{u}}.
    \]
    Since this holds for all $\mathring{u}$, we conclude
    \[
    (-\Delta)\,\mathrm{grad}_{\mathring{H}^1}\mathcal{F}(u) = d\mathcal{F}(u) \qquad \Longrightarrow \qquad    \mathrm{grad}_{\mathring{H}^1}\mathcal{F}(u) = (-\Delta)^{-1}d\mathcal{F}(u).
    \]
    Similarly, for $\mathcal{G}:H^{-1}(\mathcal{M})\to\mathbb{R}$, define the dual inner product in the usual way:
    \[
    \ip{f}{g}_{H^{-1}}
    :=
    \ip{f^\sharp}{g^\sharp}_{\mathring{H}^1}
    \qquad
    \forall f,g\in H^{-1}(\mathcal{M}).
    \]
    Then, for any variation $\mathring{f}\in H^{-1}(\mathcal{M})$,
    \[
    \dual{\mathring{f}}{d\mathcal{G}(f)}
    =
    \ip{\mathring{f}}{\mathrm{grad}_{H^{-1}}\mathcal{G}(f)}_{H^{-1}}
    =
    \ip{\mathring{f}^\sharp}{(\mathrm{grad}_{H^{-1}}\mathcal{G}(f))^\sharp}_{\mathring{H}^1}
    =
    \dual{\mathring{f}}{(\mathrm{grad}_{H^{-1}}\mathcal{G}(f))^\sharp}.
    \]
    Since this holds for all $\mathring{f}$,
    \[
    d\mathcal{G}(f) = (\mathrm{grad}_{H^{-1}}\mathcal{G}(f))^\sharp
    = (-\Delta)^{-1}\mathrm{grad}_{H^{-1}}\mathcal{G}(f) \qquad \Longrightarrow \qquad     \mathrm{grad}_{H^{-1}}\mathcal{G}(f) = (-\Delta)\,d\mathcal{G}(f),
    \]
    establishing the claim.
\end{proof}

\end{appendices}

\bibliographystyle{unsrt} 
\bibliography{references}

@incollection{arroyo2017onsager,
  title={Onsager’s variational principle in soft matter: introduction and application to the dynamics of adsorption of proteins onto fluid membranes},
  author={Arroyo, Marino and Walani, Nikhil and Torres-S{\'a}nchez, Alejandro and Kaurin, Dimitri},
  booktitle={The role of mechanics in the study of lipid bilayers},
  pages={287--332},
  year={2017},
  publisher={Springer}
}

@article{doi2011onsager,
  doi = {10.1088/0953-8984/23/28/284118},
  url = {https://doi.org/10.1088/0953-8984/23/28/284118},
  year = {2011},
  month = {jun},
  publisher = {},
  volume = {23},
  number = {28},
  pages = {284118},
  author = {Doi, Masao},
  title = {Onsager’s variational principle in soft matter},
  journal = {Journal of Physics: Condensed Matter},
}

@article{padhan2025cahn,
  title={The {C}ahn--{H}illiard--{N}avier--{S}tokes framework for multiphase fluid flows: laminar, turbulent and active},
  author={Padhan, Nadia Bihari and Pandit, Rahul},
  journal={Journal of Fluid Mechanics},
  volume={1010},
  pages={P1},
  year={2025},
  publisher={Cambridge University Press}
}

@article{giorgini2020weak,
  title = {Weak and strong solutions to the nonhomogeneous incompressible {N}avier-{S}tokes-{C}ahn-{H}illiard system},
  journal = {Journal de Mathématiques Pures et Appliquées},
  volume = {144},
  pages = {194-249},
  year = {2020},
  issn = {0021-7824},
  doi = {https://doi.org/10.1016/j.matpur.2020.08.009},
  url = {https://www.sciencedirect.com/science/article/pii/S0021782420301495},
  author = {Andrea Giorgini and Roger Temam},
}

@misc{callen1998thermodynamics,
  title={Thermodynamics and an Introduction to Thermostatistics},
  author={Callen, Herbert B and Scott, HL},
  year={1998},
  publisher={American Association of Physics Teachers}
}

@article{otto2001geometry,
  title={The geometry of dissipative evolution equations: the porous medium equation},
  author={Otto, Felix},
  year={2001},
  publisher={Taylor \& Francis}
}

@article{wu2021review,
  title={A review on the {C}ahn-{H}illiard equation: classical results and recent advances in dynamic boundary conditions},
  author={Wu, Hao},
  journal={arXiv preprint arXiv:2112.13812},
  year={2021}
}

@article{chen2025onsager,
  title={The {O}nsager principle and structure preserving numerical schemes},
  author={Chen, Huangxin and Liu, Hailiang and Xu, Xianmin},
  journal={Journal of Computational Physics},
  volume={523},
  pages={113679},
  year={2025},
  publisher={Elsevier}
}

@article{huang2022variational,
  title={Variational {O}nsager Neural Networks ({VONN}s): A thermodynamics-based variational learning strategy for non-equilibrium PDEs},
  author={Huang, Shenglin and He, Zequn and Reina, Celia},
  journal={Journal of the Mechanics and Physics of Solids},
  volume={163},
  pages={104856},
  year={2022},
  publisher={Elsevier}
}

@article{cao2022globally,
  title={A Globally Convergent Modified {N}ewton Method for the Direct Minimization of the {O}hta--{K}awasaki Energy with Application to the Directed Self-Assembly of Diblock Copolymers},
  author={Cao, Lianghao and Ghattas, Omar and Oden, J Tinsley},
  journal={SIAM Journal on Scientific Computing},
  volume={44},
  number={1},
  pages={B51--B79},
  year={2022},
  publisher={SIAM}
}

@article{luo2023optimal,
  title={Optimal design of chemoepitaxial guideposts for the directed self-assembly of block copolymer systems using an inexact {N}ewton algorithm},
  author={Luo, Dingcheng and Cao, Lianghao and Chen, Peng and Ghattas, Omar and Oden, J Tinsley},
  journal={Journal of Computational Physics},
  volume={485},
  pages={112101},
  year={2023},
  publisher={Elsevier}
}

@article{ohta1986equilibrium,
  title={Equilibrium morphology of block copolymer melts},
  author={Ohta, Takao and Kawasaki, Kyozi},
  journal={Macromolecules},
  volume={19},
  number={10},
  pages={2621--2632},
  year={1986},
  publisher={ACS Publications}
}

@book{friedli2018statistical,
  title={Statistical mechanics of lattice systems: a concrete mathematical introduction},
  author={Friedli, Sacha and Velenik, Yvan},
  year={2018},
  publisher={Cambridge University Press}
}

@book{villani2021topics,
  title={Topics in optimal transportation},
  author={Villani, C{\'e}dric},
  volume={58},
  year={2021},
  publisher={American Mathematical Soc.}
}

@article{jordan1998variational,
  title={The variational formulation of the {F}okker--{P}lanck equation},
  author={Jordan, Richard and Kinderlehrer, David and Otto, Felix},
  journal={SIAM journal on mathematical analysis},
  volume={29},
  number={1},
  pages={1--17},
  year={1998},
  publisher={SIAM}
}

@article{blondel2022efficient,
  title={Efficient and modular implicit differentiation},
  author={Blondel, Mathieu and Berthet, Quentin and Cuturi, Marco and Frostig, Roy and Hoyer, Stephan and Llinares-L{\'o}pez, Felipe and Pedregosa, Fabian and Vert, Jean-Philippe},
  journal={Advances in neural information processing systems},
  volume={35},
  pages={5230--5242},
  year={2022}
}

@article{ren2023torchopt,
  title={Torchopt: An efficient library for differentiable optimization},
  author={Ren, Jie and Feng, Xidong and Liu, Bo and Pan, Xuehai and Fu, Yao and Mai, Luo and Yang, Yaodong},
  journal={Journal of Machine Learning Research},
  volume={24},
  number={367},
  pages={1--14},
  year={2023}
}

@article{wu2006guided,
    author = {Wu, Xiang-Fa and Dzenis, Yuris A.},
    title = {Guided self-assembly of diblock copolymer thin films on chemically patterned substrates},
    journal = {The Journal of Chemical Physics},
    volume = {125},
    number = {17},
    pages = {174707},
    year = {2006},
    month = {11},
    issn = {0021-9606},
    doi = {10.1063/1.2363982},
    url = {https://doi.org/10.1063/1.2363982}
}

@article{tsori2001diblock,
    doi = {10.1209/epl/i2001-00211-3},
    url = {https://doi.org/10.1209/epl/i2001-00211-3},
    year = {2001},
    month = {mar},
    publisher = {},
    volume = {53},
    number = {6},
    pages = {722},
    author = {Y. Tsori and D. Andelman},
    title = {Diblock copolymer ordering induced by patterned surfaces},
    journal = {Europhysics Letters}
}

@article{li2020fourier,
  title={Fourier neural operator for parametric partial differential equations},
  author={Li, Zongyi and Kovachki, Nikola and Azizzadenesheli, Kamyar and Liu, Burigede and Bhattacharya, Kaushik and Stuart, Andrew and Anandkumar, Anima},
  journal={arXiv preprint arXiv:2010.08895},
  year={2020}
}

@article{weber2019physics,
    doi = {10.1088/1361-6633/ab052b},
    url = {https://doi.org/10.1088/1361-6633/ab052b},
    year = {2019},
    month = {apr},
    publisher = {IOP Publishing},
    volume = {82},
    number = {6},
    pages = {064601},
    author = {Weber, Christoph A and Zwicker, David and Jülicher, Frank and Lee, Chiu Fan},
    title = {Physics of active emulsions},
    journal = {Reports on Progress in Physics}
}

@article{onsager1931reciprocal,
  title={Reciprocal relations in irreversible processes. I.},
  author={Onsager, Lars},
  journal={Physical review},
  volume={37},
  number={4},
  pages={405},
  year={1931},
  publisher={APS}
}

@article{onsager1931reciprocal2,
  title={Reciprocal relations in irreversible processes. II.},
  author={Onsager, Lars},
  journal={Physical review},
  volume={38},
  number={12},
  pages={2265},
  year={1931},
  publisher={APS}
}

@article{de1993new,
  title={New problems on minimizing movements},
  author={De Giorgi, Ennio},
  journal={Ennio de Giorgi: selected papers},
  pages={699--713},
  year={1993}
}

@article{laux2020thresholding,
  title={The thresholding scheme for mean curvature flow and de Giorgi’s ideas for minimizing movements},
  author={Laux, Tim and Otto, Felix},
  journal={The role of metrics in the theory of partial differential equations},
  volume={85},
  pages={63--94},
  year={2020}
}

@misc{tran2025nonlinear,
  title={Nonlinear Splitting for Gradient-Based Unconstrained and Adjoint Optimization}, 
  author={Brian K. Tran and Ben S. Southworth and David B. Cavender and Sam Olivier and Syed A. Shah and Tommaso Buvoli},
  year={2025},
  eprint={2508.20280},
  archivePrefix={arXiv},
  primaryClass={math.OC},
  url={https://arxiv.org/abs/2508.20280}, 
}

@article{rockafellar1976monotone,
  title={Monotone operators and the proximal point algorithm},
  author={Rockafellar, R Tyrrell},
  journal={SIAM journal on control and optimization},
  volume={14},
  number={5},
  pages={877--898},
  year={1976},
  publisher={SIAM}
}

@article{cai2022developments,
  title={The developments of proximal point algorithms},
  author={Cai, Xing-Ju and Guo, Ke and Jiang, Fan and Wang, Kai and Wu, Zhong-Ming and Han, De-Ren},
  journal={Journal of the Operations Research Society of China},
  volume={10},
  number={2},
  pages={197--239},
  year={2022},
  publisher={Springer}
}

@article{rayleigh1871some,
  author = {Strutt, J. W.},
  title = {Some General Theorems relating to Vibrations},
  journal = {Proceedings of the London Mathematical Society},
  volume = {s1-4},
  number = {1},
  pages = {357-368},
  doi = {https://doi.org/10.1112/plms/s1-4.1.357},
  url = {https://londmathsoc.onlinelibrary.wiley.com/doi/abs/10.1112/plms/s1-4.1.357},
  eprint = {https://londmathsoc.onlinelibrary.wiley.com/doi/pdf/10.1112/plms/s1-4.1.357},
  year = {1871}
}

@article{chen2002phase,
  title={Phase-field models for microstructure evolution},
  author={Chen, Long-Qing},
  journal={Annual review of materials research},
  volume={32},
  number={1},
  pages={113--140},
  year={2002},
  publisher={Annual Reviews 4139 El Camino Way, PO Box 10139, Palo Alto, CA 94303-0139, USA}
}

@article{wu2020phase,
  title={Phase-field modeling of fracture},
  author={Wu, Jian-Ying and Nguyen, Vinh Phu and Nguyen, Chi Thanh and Sutula, Danas and Sinaie, Sina and Bordas, St{\'e}phane PA},
  journal={Advances in applied mechanics},
  volume={53},
  pages={1--183},
  year={2020},
  publisher={Elsevier}
}

@article{shi2013self,
  title={Self-assembly of diblock copolymers under confinement},
  author={Shi, An-Chang and Li, Baohui},
  journal={Soft Matter},
  volume={9},
  number={5},
  pages={1398--1413},
  year={2013},
  publisher={Royal Society of Chemistry}
}

@article{noshay2013block,
  title={Block copolymers: overview and critical survey},
  author={Noshay, Allen and McGrath, James E},
  year={2013},
  publisher={Elsevier}
}

@article{zhu2025stokes,
  author = {Zhu, Cuncheng and Saintillan, David and Chern, Albert},
  journal = {Journal of Fluid Mechanics},
  pages = {R1},
  title = {Stokes flow of an evolving fluid film with arbitrary shape and topology},
  volume = {1003},
  year = {2025}
}

@article{marsden2001discrete,
  title={Discrete mechanics and variational integrators},
  author={Marsden, Jerrold E and West, Matthew},
  journal={Acta numerica},
  volume={10},
  pages={357--514},
  year={2001},
  publisher={Cambridge University Press}
}

@article{yu2021onsager,
  title = {Onsager{N}et: Learning stable and interpretable dynamics using a generalized {O}nsager principle},
  author = {Yu, Haijun and Tian, Xinyuan and E, Weinan and Li, Qianxiao},
  journal = {Phys. Rev. Fluids},
  volume = {6},
  issue = {11},
  pages = {114402},
  numpages = {32},
  year = {2021},
  month = {Nov},
  publisher = {American Physical Society},
  doi = {10.1103/PhysRevFluids.6.114402},
  url = {https://link.aps.org/doi/10.1103/PhysRevFluids.6.114402}
}

@misc{chang2025unsupervised,
      title={Unsupervised operator learning approach for dissipative equations via {O}nsager principle}, 
      author={Zhipeng Chang and Zhenye Wen and Xiaofei Zhao},
      year={2025},
      eprint={2508.07440},
      archivePrefix={arXiv},
      primaryClass={cs.LG},
      url={https://arxiv.org/abs/2508.07440}, 
}

@article{li2023deep,
  title={Deep learning-based computational method for soft matter dynamics: Deep Onsager-Machlup method},
  author={Li, Zhihao and Zou, Boyi and Wang, Haiqin and Su, Jian and Wang, Dong and Xu, Xinpeng},
  journal={arXiv preprint arXiv:2308.14513},
  year={2023}
}

@article{weinan2018deep,
  author = {E, Weinan and Yu, Bing},
  journal = {Communications in Mathematics and Statistics},
  number = {1},
  pages = {1--12},
  title = {The Deep {R}itz Method: A Deep Learning-Based Numerical Algorithm for Solving Variational Problems},
  volume = {6},
  year = {2018}
}

@article{karkar2023module,
  title={Module-wise training of neural networks via the minimizing movement scheme},
  author={Karkar, Skander and Ayed, Ibrahim and de B{\'e}zenac, Emmanuel and Gallinari, Patrick},
  journal={Advances in Neural Information Processing Systems},
  volume={36},
  pages={53126--53145},
  year={2023}
}

@article{rumpf2025hybrid,
  title={A hybrid minimizing movement and neural network approach to {W}illmore flow},
  author={Rumpf, Martin and Sassen, Josua and Smoch, Christoph},
  journal={arXiv preprint arXiv:2502.14656},
  year={2025}
}

@article{alvarez2021optimizing,
  title={Optimizing functionals on the space of probabilities with input convex neural networks},
  author={Alvarez-Melis, David and Schiff, Yair and Mroueh, Youssef},
  journal={arXiv preprint arXiv:2106.00774},
  year={2021}
}

@article{xu2023normalizing,
  title={Normalizing flow neural networks by {JKO} scheme},
  author={Xu, Chen and Cheng, Xiuyuan and Xie, Yao},
  journal={Advances in Neural Information Processing Systems},
  volume={36},
  pages={47379--47405},
  year={2023}
}

@article{hertrich2024importance,
  title={Importance corrected neural {JKO} sampling},
  author={Hertrich, Johannes and Gruhlke, Robert},
  journal={arXiv preprint arXiv:2407.20444},
  year={2024}
}

@inproceedings{bunne2022proximal,
  title={Proximal optimal transport modeling of population dynamics},
  author={Bunne, Charlotte and Papaxanthos, Laetitia and Krause, Andreas and Cuturi, Marco},
  booktitle={International Conference on Artificial Intelligence and Statistics},
  pages={6511--6528},
  year={2022},
  organization={PMLR}
}

@article{mokrov2021large,
  title={Large-scale {W}asserstein gradient flows},
  author={Mokrov, Petr and Korotin, Alexander and Li, Lingxiao and Genevay, Aude and Solomon, Justin M and Burnaev, Evgeny},
  journal={Advances in Neural Information Processing Systems},
  volume={34},
  pages={15243--15256},
  year={2021}
}

@article{harlow1965numerical,
  title={Numerical calculation of time-dependent viscous incompressible flow of fluid with free surface},
  author={Harlow, Francis H and Welch, J Eddie and others},
  journal={Physics of fluids},
  volume={8},
  number={12},
  pages={2182},
  year={1965}
}

@article{arakawa1977computational,
  title={Computational design of the basic dynamical processes of the {UCLA} general circulation model},
  author={Arakawa, Akio and Lamb, Vivian R},
  year={1977}
}

@inproceedings{amos2017input,
  title={Input convex neural networks},
  author={Amos, Brandon and Xu, Lei and Kolter, J Zico},
  booktitle={International conference on machine learning},
  pages={146--155},
  year={2017},
  organization={PMLR}
}

@inproceedings{rezende2015variational,
  title={Variational inference with normalizing flows},
  author={Rezende, Danilo and Mohamed, Shakir},
  booktitle={International conference on machine learning},
  pages={1530--1538},
  year={2015},
  organization={PMLR}
}

@book{marsden1994mathematical,
  title={Mathematical foundations of elasticity},
  author={Marsden, Jerrold E and Hughes, Thomas JR},
  year={1994},
  publisher={Courier Corporation}
}

@article{wang2021onsager,
  title={Onsager's variational principle in active soft matter},
  author={Wang, Haiqin and Qian, Tiezheng and Xu, Xinpeng},
  journal={Soft Matter},
  volume={17},
  number={13},
  pages={3634--3653},
  year={2021},
  publisher={Royal Society of Chemistry}
}

@article{liu2024variational,
  title={A variational discretization method for mean curvature flows by the {O}nsager principle},
  author={Liu, Yihe and Xu, Xianmin},
  journal={arXiv preprint arXiv:2404.11935},
  year={2024}
}

@article{zhou2018dynamics,
  title={Dynamics of viscoelastic filaments based on {O}nsager principle},
  author={Zhou, Jiajia and Doi, Masao},
  journal={Physical Review Fluids},
  volume={3},
  number={8},
  pages={084004},
  year={2018},
  publisher={APS}
}

@article{xu2017hydrodynamic,
  title={Hydrodynamic boundary conditions derived from {O}nsager's variational principle},
  author={Xu, Xinpeng and Qian, Tiezheng},
  journal={Procedia IUTAM},
  volume={20},
  pages={144--151},
  year={2017},
  publisher={Elsevier}
}

@article{du2024thermodynamic,
  title={Thermodynamic variational principle, its connections to the phenomenological laws and its applications to the derivation of microstructural models},
  author={Du, Qiang},
  journal={Materials Genome Engineering Advances},
  volume={2},
  number={2},
  pages={e51},
  year={2024},
  publisher={Wiley Online Library}
}

@article{lin2023onsager,
  title={Onsager’s variational principle for nonreciprocal systems with odd elasticity},
  author={Lin, Li-Shing and Yasuda, Kento and Ishimoto, Kenta and Hosaka, Yuto and Komura, Shigeyuki},
  journal={Journal of the Physical Society of Japan},
  volume={92},
  number={3},
  pages={033001},
  year={2023},
  publisher={The Physical Society of Japan}
}

@article{kou2020energy,
  title={Energy stable and mass conservative numerical method for a generalized hydrodynamic phase-field model with different densities},
  author={Kou, Jisheng and Wang, Xiuhua and Zeng, Meilan and Cai, Jianchao},
  journal={Physics of Fluids},
  volume={32},
  number={11},
  year={2020},
  publisher={AIP Publishing}
}

@article{xiong2025thermodynamically,
  title={A thermodynamically consistent phase-field lattice {B}oltzmann method for two-phase electrohydrodynamic flows},
  author={Xiong, Fang and Wang, Lei and Huang, Jiangxu and Luo, Kang},
  journal={Journal of Scientific Computing},
  volume={103},
  number={1},
  pages={1--32},
  year={2025},
  publisher={Springer}
}

@article{xiao2025moving,
  title = {A moving mesh method for porous medium equation by the {O}nsager variational principle},
  journal = {Journal of Computational Physics},
  volume = {536},
  pages = {114061},
  year = {2025},
  issn = {0021-9991},
  doi = {https://doi.org/10.1016/j.jcp.2025.114061},
  url = {https://www.sciencedirect.com/science/article/pii/S0021999125003444},
  author = {Si Xiao and Xianmin Xu}
}

@article{strang1999discrete,
  author = {Strang, Gilbert},
  title = {The Discrete Cosine Transform},
  journal = {SIAM Review},
  volume = {41},
  number = {1},
  pages = {135-147},
  year = {1999},
  doi = {10.1137/S0036144598336745},
  URL = {https://doi.org/10.1137/S0036144598336745},
  eprint = {https://doi.org/10.1137/S0036144598336745}
}

@article{stuart1989nonlinear,
  title={Nonlinear instability in dissipative finite difference schemes},
  author={Stuart, Andrew},
  journal={SIAM review},
  volume={31},
  number={2},
  pages={191--220},
  year={1989},
  publisher={SIAM}
}

@article{furihata2001stable,
  title={A stable and conservative finite difference scheme for the {C}ahn-{H}illiard equation},
  author={Furihata, Daisuke},
  journal={Numerische Mathematik},
  volume={87},
  number={4},
  pages={675--699},
  year={2001},
  publisher={Springer}
}

@article{tang2019energy,
  title={On energy dissipation theory and numerical stability for time-fractional phase-field equations},
  author={Tang, Tao and Yu, Haijun and Zhou, Tao},
  journal={SIAM Journal on Scientific Computing},
  volume={41},
  number={6},
  pages={A3757--A3778},
  year={2019},
  publisher={SIAM}
}

@misc{lu2026structure,
      title={Structure-Aware Variational Learning of a Class of Generalized Diffusions}, 
      author={Yubin Lu and Xiaofan Li and Chun Liu and Qi Tang and Yiwei Wang},
      year={2026},
      eprint={2604.20188},
      archivePrefix={arXiv},
      primaryClass={cs.LG},
      url={https://arxiv.org/abs/2604.20188}, 
}

@article{mielke2016generalization,
  author  = {Mielke, Alexander and Renger, Dora R.\ M. and Peletier, Mark A.},
  title   = {A generalization of {O}nsager’s reciprocity relations to gradient flows with nonlinear mobility},
  journal = {Journal of Non‐Equilibrium Thermodynamics},
  volume  = {41},
  number  = {2},
  pages   = {141--149},
  year    = {2016},
  doi     = {10.1515/jnet-2015-0073},
  url     = {https://doi.org/10.1515/jnet-2015-0073},
}

@inproceedings{gruber2025efficiently,
  author = {Anthony Gruber and Kookjin Lee and Haksoo Lim and Noseong Park and Nathaniel Trask},
  booktitle = {The Thirteenth International Conference on Learning Representations},
  title = {Efficiently Parameterized Neural Metriplectic Systems},
  year = {2025}
}

@article{celledoni2021structure,
  title={Structure-preserving deep learning},
  author={Celledoni, Elena and Ehrhardt, Matthias J and Etmann, Christian and McLachlan, Robert I and Owren, Brynjulf and Schonlieb, C-B and Sherry, Ferdia},
  journal={European journal of applied mathematics},
  volume={32},
  number={5},
  pages={888--936},
  year={2021},
  publisher={Cambridge University Press}
}

@article{loya2025structure,
  title={Structure-Preserving Neural Ordinary Differential Equations for Stiff Systems},
  author={Loya, Allen Alvarez and Serino, Daniel A and Burby, JW and Tang, Qi},
  journal={arXiv preprint arXiv:2503.01775},
  year={2025}
}

@article{greydanus2019hamiltonian,
  title={Hamiltonian neural networks},
  author={Greydanus, Samuel and Dzamba, Misko and Yosinski, Jason},
  journal={Advances in neural information processing systems},
  volume={32},
  year={2019}
}

@article{hu2024energetic,
  author = {Hu, Ziqing and Liu, Chun and Wang, Yiwei and Xu, Zhiliang},
  title = {Energetic Variational Neural Network Discretizations of Gradient Flows},
  journal = {SIAM Journal on Scientific Computing},
  volume = {46},
  number = {4},
  pages = {A2528-A2556},
  year = {2024},
  doi = {10.1137/22M1529427},
  URL = {https://doi.org/10.1137/22M1529427},
  eprint = {https://doi.org/10.1137/22M1529427}
}

@inproceedings{giorgi1992movimenti,
  author    = {E. D. Giorgi},
  title     = {Movimenti minimizzanti},
  booktitle = {Proceedings of the Conference on Aspetti e problemi della Matematica oggi},
  year      = {1992},
  address   = {Lecce, Italy},
}

\end{document}